

%
%
%

\documentclass{amsart}

\usepackage{latexsym,amsfonts,exscale,enumerate}
\usepackage{amsmath,amsthm,amscd,amssymb}
\usepackage{hyperref}

\addtolength{\hoffset}{-1.6cm}
\addtolength{\textwidth}{3cm}

\usepackage{mathrsfs}
\usepackage{youngtab}


\usepackage{pstricks}

\psset{linewidth=0.3pt,dimen=middle}
\psset{xunit=.70cm,yunit=0.70cm}

\usepackage[all]{xy}
\SelectTips{cm}{}

\usepackage{graphicx}



\normalfont\upshape

\newcommand{\Hq}{\mathcal{H}_q(n)}
\newcommand{\Ho}{\mathcal{H}_{-1}(n)}

\newcommand{\Hz}{\mathcal{H}_{0}(n)}

\newcommand{\NH}{NH}
\newcommand{\ONH}{ONH}

\newcommand{\pol}{\mathrm{Pol}}
\newcommand{\opol}{\mathrm{OPol}}

\newcommand{\sym}{\Lambda}
\newcommand{\osym}{\mathrm{O}\Lambda}

\newcommand{\qrk}{\mathrm{rk}_q}

\newcommand{\sch}{\mathfrak{s}}

\newcommand{\undx}{\underline{x}}

\newcommand{\xt}{\widetilde{x}}




\def\nn{\notag}



\newcommand{\refequal}[1]{\xy {\ar@{=}^{#1}
(-1,0)*{};(1,0)*{}};
\endxy}

%
%


\renewcommand{\to}{\rightarrow}
\newcommand{\maps}{\colon}

\newcommand{\im}{{\rm im\ }}

\newcommand{\rk}{{\rm rk\ }}

\newcommand{\scs}{\scriptstyle}



\def\mf{\mathfrak}
\def\shuffle{\,\raise 1pt\hbox{$\scriptscriptstyle\cup{\mskip
               -4mu}\cup$}\,}

\def\cal#1{\mathcal{#1}}%
\def\1{\mathbbm{1}}%

\theoremstyle{definition}
\newtheorem{thm}{Theorem}[section]
\newtheorem*{thmm}{Theorem}
\newtheorem{cor}[thm]{Corollary}

\newtheorem{lem}[thm]{Lemma}
\newtheorem{rem}[thm]{Remark}
\newtheorem{prop}[thm]{Proposition}
\newtheorem{defn}[thm]{Definition}
\newtheorem{example}[thm]{Example}


\numberwithin{equation}{section}


%
%
%

%


\def\emph#1{{\sl #1\/}}

%
%


\let\tilde=\widetilde


\let\phi=\varphi
\let\theta=\vartheta


\usepackage{bbm}
\def\C{{\mathbbm C}}

\def\Z{{\mathbbm Z}}
\def\Q{{\mathbbm Q}}


%
%

\newcommand{\topp}{{\rm top}}


\title{Oddification of the cohomology of type $A$ Springer varieties}

      \author{Aaron D.\ Lauda}
      \address{Department of Mathematics, University of Southern California, Los Angeles, CA 90089, USA}
\email{lauda@usc.edu}

\author{Heather M.\ Russell}
     \address{Department of Mathematics, University of Southern California, Los Angeles, CA 90089, USA}
\email{heathemr@usc.edu}
\date{March 5, 2012}

%
\begin{document}
%

\maketitle

\begin{abstract}
We identify the ring of odd symmetric functions introduced by Ellis and Khovanov as the space of skew polynomials fixed by a natural action of the Hecke algebra at $q=-1$.  This allows us to define graded modules over the Hecke algebra at $q=-1$  that are `odd' analogs of the cohomology of type $A$ Springer varieties.  The graded module associated to the full flag variety corresponds to the quotient of the skew polynomial ring by the left ideal of nonconstant odd symmetric functions.  The top degree component of the odd cohomology of Springer varieties is identified with the corresponding Specht module of the Hecke algebra at $q=-1$.
\end{abstract}


%
\section{Introduction}
%

The full flag variety $X$ consists of the set of all flags in $\C^n$,
\[
 X = \left\{ 0=U_0 \subset U_1 \subset \dots \subset U_n=\C^n  \mid \dim_{\C} U_i = i \right\}.
\]
The integral cohomology ring $H(X):= H^*(X,\Z)$ can be explicitly described as a quotient of the graded polynomial ring $\pol_n:= \Z[x_1, \dots, x_n]$ with $\deg(x_j)=2$.  Write  $\varepsilon_r=\varepsilon_r(x_1,\dots, x_n)$ for the $r$th elementary symmetric function in $n$ variables. Denote by $\sym_n:= \Z[x_1,\dots, x_n]^{S_n}=\Z[\varepsilon_1,\varepsilon_2, \dots, \varepsilon_n]$ the ring of symmetric functions, and let $\sym_n^+$ be the two-sided ideal of $\pol_n$ generated by the elementary symmetric functions with no constant term. The cohomology ring of $X$ is the quotient
\begin{equation} \label{eq_intro-full}
H(X)\cong \Z[x_1,\dots,x_n]/ \sym_n^+.
\end{equation}

Let $\lambda= \lambda_1 \geq \dots \geq \lambda_n\geq 0$ be a partition of $n$, and let $\lambda' = \lambda'_1 \geq \dots \geq \lambda'_n\geq 0$ be its transpose partition. Consider a nilpotent matrix $x^{\lambda}$ of Jordan type $\lambda$.
The Springer variety $X^{\lambda}$ is the closed subvariety of $X$ consisting of those flags preserved by $x^{\lambda}$
\[
 X^{\lambda} = \left\{ (U_0,U_1, \dots, U_n)\in X \mid x^{\lambda} U_i \subseteq U_{i-1} \right\}.
\]
Work of De Concini and Procesi~\cite{DP} and Tanisaki~\cite{Tan} generalizes the above characterization of $H(X)$ showing that the integral cohomology ring of the Springer variety $H(X^{\lambda}):=H^*(X^{\lambda}, \Z)$ is isomorphic to the quotient of the polynomial ring $\pol_n$ by the two-sided ideal $I_{\lambda}$ generated by the following elements:
\begin{equation} \label{def_Ilambda}
 \cal{C}_{\lambda} := \left\{ \varepsilon_r^S
\;\middle\vert\;
\begin{array}{l}
    k \geq r > k -\delta_k(\lambda),  \\
    S\subseteq \{x_1, \ldots, x_n\}, |S|=k
\end{array}
 \right\},
\end{equation}
where $\varepsilon_r^S$ denotes the $r$th elementary symmetric function in the variables of $S$ and $$\delta_k(\lambda)=\lambda_n'+\dots+\lambda_{n-k+1}'.$$  For more details on the cohomology of Springer varieties see ~\cite{BO,GP,HSp,Springer}, and for a generalization of this ideal see \cite{MT}.

The cohomology rings of Springer varieties carry additional structure summarized below.

\begin{enumerate}
  \item There is an action of the symmetric group $S_n$ on $H(X^{\lambda})$ that preserves the cohomological grading.

  \item The pull-back homomorphism $i^{\ast} \maps H(X)\to H(X^{\lambda})$ arising from the inclusion $X^{\lambda} \to X$ is a surjective $\Z S_n$-module map.

  \item   The top nonzero degree of rational cohomology $H^{\topp}(X^{\lambda},\Q)$ is isomorphic to the irreducible $\Q S_n$-module corresponding to the partition $\lambda$.

  \item The full rational cohomology $H^*(X^{\lambda}, \Q)$ is isomorphic as a $\Q S_n$-module to the permutation module $M^{\lambda}$.
\end{enumerate}

From the polynomial presentation of the cohomology ring of the full flag variety in \eqref{eq_intro-full} the action of the symmetric group is quite transparent (Springer's original action is more sophisticated~\cite{Springer}).  In particular, $w \in S_n$ acts on $f \in \pol_n$ by permuting indices so that
\[
w \cdot f(x_1,x_2, \dots , x_n) = f(x_{w(1)}, x_{w(2)}, \dots , x_{w(n)})
.\]
Since this action of $S_n$ preserves the ideal of symmetric functions $\sym_n^+$ , it descends to an action on the quotient $H(X)$.  The ideal $I_{\lambda}$  is preserved by the action of $S_n$, so there is also a natural $S_n$-action on the quotient $H(X^{\lambda}) = \pol_n / I_{\lambda}$.  The rings $H(X^{\lambda})$ inherit the grading from $\pol_n$. Since the ideal $I_{\lambda}$ is homogeneous, the action of $S_n$ restricts to an action on each graded piece of $H(X^{\lambda})$.

\bigskip
Recent work of Ellis and Khovanov introduces a new ``odd" analog of the Hopf algebra of symmetric functions~\cite{EK}.  The ring of odd symmetric functions, denoted by $\osym_n$, is noncommutative despite exhibiting behaviour surprisingly similar to the ordinary ring of symmetric functions; it has odd analogs of elementary, complete, and Schur function bases. The ring $\osym_n$ is a certain quotient of a related version of noncommutative symmetric functions defined by Thibon and Ung~\cite{TU}.  The precise form of this quotient is essential to the results in this article.

The odd symmetric function and ordinary symmetric function rings have the same graded rank and become isomorphic when coefficients are reduced modulo two.  These quintessential rank and reduction features characterize the ``oddification" of an algebraic object. Recent work of the first author with Ellis and Khovanov gives an explicit realization of odd symmetric functions in the ring of skew polynomials
\begin{equation}
\opol_n=\Z\langle x_1,\ldots,x_n\rangle/\langle x_ix_j+x_jx_i=0\text{ for }i\neq j\rangle,
\end{equation}
and uses this to connect odd symmetric functions to a new odd analog of the nilHecke algebra~\cite{EKL} inspired by constructions in Heegaard-Floer homology~\cite{LOT}.    Kang, Kashiwara, and Tsuchioka~\cite{KKT} independently studied the odd nilHecke algebra in their work on categorification of Hecke-Clifford super algebras, and it is closely related to earlier work of Wang~\cite{Wang}.  This algebra also appears in recent work of Hill and Wang on categorifications of quantum Kac-Moody superalgebras~\cite{HW}.

The parallels between the odd and even nilHecke algebras extend further.
Just as cyclotomic quotients of the nilHecke algebra are Morita equivalent to the integral cohomology rings $H^{\ast}(Gr(k,N),\Z)$ of Grassmannians of $k$-planes in $\C^N$ ~\cite[Section 5]{Lau-exp}, the corresponding cyclotomic quotients of the odd nilHecke algebra are Morita equivalent to {\em noncommutative} rings $OH(Gr(k,N))$~\cite{EKL}.  The graded ranks of these rings are the same, they become isomorphic when coefficients are reduced modulo two, and $OH(Gr(k,N))$ has a basis consisting of certain odd Schur polynomials ~\cite{EKL}.  In other words, $OH(Gr(k,N))$ is an oddification of the cohomology of the Grassmannian.

The nilHecke algebra, cohomology rings of Grassmannians, and the ring of symmetric functions are vital components in the theory of categorified quantum $\mathfrak{sl}_2$~\cite{BFK,FKS,CR,Lau1,Lau2,KLMS}, which underlies Khovanov homology.  All of the odd structures discussed above aim to establish a similar algebraic framework underlying the odd Khovanov homology theory of Ozsv\'{a}th, Rasmussen, Szab\'{o}~\cite{ORS}. These invariants, which both categorify the Jones polynomial, are distinct (see ~\cite[Section 3.1]{Shum}) but agree modulo two.

Another algebraic structure closely connected to Khovanov homology is the Springer variety $X^{(n,n)}$. In particular, the cohomology of $X^{(n,n)}$ is isomorphic to the center of Khovanov's arc algebra $H_n$ used in one construction of Khovanov homology for tangles ~\cite{KhSp}.  There is a generalization of Khovanov's arc algebra~\cite{Strop,ChK} that Stroppel and Webster showed is isomorphic to the convolution algebra formed on cohomologies of components of the Springer variety~\cite{SW}. These Springer varietes also arise in  Cautis-Kamnitzer's geometric construction of Khovanov homology using derived categories coherent sheaves~\cite{CK0}.  These connections lead one to ask whether the cohomology of Springer varieties also have natural ``odd" analogs.  In this article we introduce graded quotient modules over the skew polynomial ring which are oddifications of the cohomology of Springer varieties $X^{\lambda}$ for any partition.

In the odd setting there is  an action of the symmetric group $S_n$ on $\opol_n$.  However,   odd symmetric functions are not preserved by this action. This raises the question of what object plays the role of the symmetric group $S_n$ in the odd theory of Springer varieties.  Here we show that the  Hecke algebra $\Hq$ at $q=-1$ is the natural analog.

We begin by defining an action of the Hecke algebra $\Hq$ at $q=-1$ on the space of skew polynomials $\opol_n$. A related action of $\Hq$ on $\pol_n$ was defined in \cite{APR} where $T_i$ acts by the q-commutator
\begin{equation}
  [\partial_i,x_i]_q=\partial_i x_i - q x_i \partial_i,
\end{equation}
and $\partial_i$ is the divided difference operator (see section~\ref{subsec_hecke}).  When $q=1$ the q-commutator action is just the usual permutation action of $\mathcal{H}_{1}(n)=\Z[S_n]$ on $\pol_n$.  In \cite[Claim 3.1]{APR} it is shown that the ring of symmetric functions $\sym_n$ can also be understood as the space of invariants in $\pol_n$ under this action of $\Hq$.  The odd nilHecke algebra acts  on skew polynomials by so-called odd divided difference operators.  Using these operators we define an action of $\Hq$ at $q=-1$ by a similar $q$-commutator formula and prove the following theorem.

\begin{thmm}
Odd symmetric functions $\osym_n \subset \opol_n$ are precisely the skew-polynomials in $\opol_n$ that are invariant under the action of $\Ho$. In other words,  $T_i(f) = f$ for all generators $T_i$ in $\Ho$ if and only if $f$ is odd symmetric.
\end{thmm}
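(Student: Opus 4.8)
The plan is to follow the strategy used for the even case in \cite[Claim 3.1]{APR}, reducing the statement to a single operator identity together with the characterization of $\osym_n$ as the common kernel of the odd divided difference operators. I would recall from \cite{EKL} two inputs: first, that the odd nilHecke algebra acts on $\opol_n$ with the relation $\partial_i x_i + x_{i+1}\partial_i = 1$ holding as operators on $\opol_n$; second, that the subspace $\osym_n$ of odd symmetric functions is exactly the common kernel $\bigcap_{i=1}^{n-1}\ker\partial_i$. (If this kernel description is not available off the shelf, I would recover it by checking that the odd elementary symmetric functions lie in each $\ker\partial_i$ and generate a subring of the correct graded rank matching $\osym_n$, with the reverse inclusion following since $\ker\partial_i$ consists of the skew polynomials that are ``odd-symmetric in $x_i,x_{i+1}$.'')

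At $q=-1$ the generator acts by $T_i = \partial_i x_i + x_i\partial_i$. Substituting $\partial_i x_i = 1 - x_{i+1}\partial_i$, I would compute, for any $f \in \opol_n$,
\begin{equation}
  T_i(f) = \bigl(1 - x_{i+1}\partial_i\bigr)(f) + x_i\partial_i(f) = f + (x_i - x_{i+1})\,\partial_i(f).
\end{equation}
Hence $T_i(f) = f$ if and only if $(x_i - x_{i+1})\,\partial_i(f) = 0$. This one identity drives both implications. Because the relation $\partial_i x_i + x_{i+1}\partial_i = 1$ holds in the odd nilHecke algebra independently of parity, no sign corrections from the odd Leibniz rule intervene here; in any case, restricting to homogeneous $f$ (legitimate since $T_i$ preserves the grading, as $\partial_i$ and $x_i$ have opposite degrees) fixes the parity and leaves the computation unchanged.

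For the forward implication, if $f \in \osym_n$ then $\partial_i(f) = 0$ for every $i$, so the displayed identity immediately gives $T_i(f) = f$. For the converse, suppose $T_i(f) = f$ for all $i$; then $(x_i - x_{i+1})\,\partial_i(f) = 0$ for each $i$. The ring $\opol_n$ is an iterated Ore extension of $\Z$ (each variable is adjoined via the automorphism negating the previous ones, with no derivation), hence an integral domain; since $x_i - x_{i+1} \neq 0$, left multiplication by it is injective, forcing $\partial_i(f) = 0$ for all $i$. Thus $f \in \bigcap_i \ker\partial_i = \osym_n$. I expect the main obstacle to be the kernel characterization $\osym_n = \bigcap_i\ker\partial_i$ itself: the inclusion of odd symmetric functions into each kernel is a direct computation, but the reverse inclusion requires knowing that nothing outside $\osym_n$ is annihilated by all the $\partial_i$, which is precisely where the exact form of the quotient defining $\osym_n$ and a graded-rank comparison must be invoked.
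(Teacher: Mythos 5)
Your proof is correct, but it takes a genuinely different route from the paper's for the converse direction. The paper expands an invariant $g$ in the odd Schubert polynomial basis, $g = \sum_{w} f_w \sch_w$ with $f_w \in \osym_n$, turns invariance into the relations $\sum_{w,\,\ell(ws_i)=\ell(w)-1} s_i(f_w)\,\sch_{ws_i}=0$, and then kills all $f_w$ with $w \neq e$ by a reduction mod $2$: the coefficients $s_i(f_w)$ need not be odd symmetric over $\Z$, but they are so mod $2$, and odd Schubert polynomials form a basis of $\opol_n \otimes_{\Z}\Z/2$ as a free $\osym_n\otimes_{\Z}\Z/2$-module, so after dividing by an appropriate power of $2$ any nontrivial relation gives a contradiction. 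You instead cancel the factor $(x_i - x_{i+1})$ outright, using that $\opol_n$ is a domain (your iterated Ore extension argument is valid: each $x_k$ is adjoined to $\opol_{k-1}$ via the sign automorphism with zero derivation, and an Ore extension of a domain by an automorphism is a domain), which reduces invariance directly to $\partial_i(f)=0$ for all $i$ --- and that is literally the paper's definition of $\osym_n$, so your worried fallback about the kernel characterization is unnecessary. Your route buys three things: it is shorter and avoids both the Schubert basis and the mod-$2$ machinery; it yields the paper's subsequent remark (invariance under the operators $A_i$ for arbitrary $q$ characterizes $\osym_n$) with no extra work, since $qx_i + x_{i+1} \neq 0$ can be cancelled the same way over any integral domain of coefficients; and it makes explicit a step the paper itself passes over, namely that its asserted equivalence between $T_i(g)=g$ and $\sum_{w} s_i(f_w)\partial_i(\sch_w)=0$ already requires $x_i - x_{i+1}$ not to be a left zero divisor in $\opol_n$. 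What the paper's approach buys in exchange is that it stays entirely within the toolkit (Schubert bases, freeness, mod-$2$ comparison) that it reuses throughout the rest of the article, and it identifies the invariant $g$ concretely as the coefficient $f_e$ of the identity Schubert polynomial.
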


Just as commutativity is lost  in the passage from cohomology rings of Grassmanians to odd cohomology of Grassmanians, the odd cohomology of Springer varieties no longer possess a commutative ring structure or even a ring structure at all. This occurs because the left ideal $OI^L:=\opol_n \cdot \osym^+$ generated by the elements in $\osym^+$ differs from the right ideal $OI^R:= \osym^+ \cdot \opol_n$. This fact was first observed by Alexander Ellis.  Thus, the definition  of the odd cohomology of the full flag variety requires a choice of the left or right ideal.  Quotienting by the two-sided ideal leads to torsion in the quotient ring, and the resulting ring will not have the same dimension as the ordinary cohomology ring $H(X)$ when coefficients are reduced modulo two.

The action of $\Ho$ on $\opol_n$ is homogeneous with the respect to the grading.  Since
the left ideal $OI^{L}$ is preserved by the action of $\Ho$ and is generated by homogenous elements, the quotient module $\opol_n / OI^L$ is a left graded $\Ho$-module.   We denote this quotient by $OH(X)$ since it will play the role of the cohomology of the full flag variety in the odd theory.

Taking the analogy between the even and odd theories one step further, we construct odd analogs $OI_{\lambda}$ of the ideals $I_{\lambda}$ from \eqref{def_Ilambda} and show that these are also invariant under the Hecke algebra $\Ho$ action.    This allows us to define graded left $\Ho$-modules
\begin{equation} \nn
  OH(X^{\lambda}) := \opol_n/ OI_{\lambda}.
\end{equation}
The ring $\opol_n$ is graded with $\deg(x_i)=2$, and the ideal $OI_{\lambda}$ is homogeneous with respect to this grading,  so that the action of $\Ho$ restricts to an action on each homogeneous component of $OH(X^{\lambda})$. In this article we prove the following results.

\begin{thmm} \hfill
\begin{enumerate}
\item  There is an isomorphism of graded vector spaces:
\begin{equation} \nn
  OH(X)\otimes_{\Z}  \Z_2 \cong_{} H(X) \otimes_{\Z}\Z_2, \qquad \text{and} \qquad
   OH(X^{\lambda})\otimes_{\Z}\Z_2  \cong H(X^{\lambda})\otimes_{\Z}\Z_2 .
\end{equation}

\item The underlying abelian group of $OH(X^{\lambda})$ is a free graded $\Z$-module that admits an integral basis identical in form to the Garsia-Procesi basis~\cite{GP} of $H(X^{\lambda})$.  Using this basis we show that the graded ranks agree
\begin{equation} \nn
\qrk OH(X^{\lambda}) = \qrk H(X^{\lambda}).
\end{equation}

\item There is a surjective homomorphism $ OH(X) \to OH(X^{\lambda})$ of graded $\Ho$-modules.

\item Working over a field $\mathbbm{F}$, the top graded component $OH^{\topp}(X^{\lambda})$ of $OH(X^{\lambda})$ is isomorphic as an $\Ho$-module to the Specht module $S^{\lambda}_{-1}$.
\end{enumerate}
\end{thmm}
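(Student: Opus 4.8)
The plan is to establish the four parts in order, each feeding the next, with part (4) as the genuine obstacle. Part (1) rests on the observation that reducing coefficients modulo $2$ collapses the skew relation: over $\Z_2$ one has $-1=1$, so $x_ix_j+x_jx_i=0$ becomes $x_ix_j=x_jx_i$, and hence $\opol_n\otimes_\Z\Z_2\cong\pol_n\otimes_\Z\Z_2$ as graded rings. I would then check that under this isomorphism the odd ideals reduce to the classical ones, $OI^L\otimes_\Z\Z_2\mapsto\sym_n^+\otimes_\Z\Z_2$ and $OI_\lambda\otimes_\Z\Z_2\mapsto I_\lambda\otimes_\Z\Z_2$. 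The ambiguity between the left and right versions of $OI^L$ disappears mod $2$ because the reduced ring is commutative, and the odd elementary symmetric functions reduce to the ordinary ones (an Ellis--Khovanov feature); passing to quotients yields the two graded-vector-space isomorphisms.

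For part (2) I would introduce a straightening algorithm, using the generators of $OI_\lambda$ as rewriting rules to express an arbitrary skew monomial, up to sign, as a $\Z$-combination of distinguished monomials of the Garsia--Procesi shape. If this rewriting is triangular (distinct leading monomials, unit leading coefficients), the distinguished monomials are simultaneously spanning and $\Z$-linearly independent, exhibiting $OH(X^\lambda)$ as a free $\Z$-module with a basis identical in form to the Garsia--Procesi basis; the equality $\qrk OH(X^\lambda)=\qrk H(X^\lambda)$ is then a count, consistent with part (1). Part (3) follows from the ideal containment $OI^L\subseteq OI_\lambda$: taking $k=n$ in the defining data of $OI_\lambda$ one has $\delta_n(\lambda)=n$, so every odd elementary symmetric function $\varepsilon_r$ in all $n$ variables ($1\le r\le n$) occurs among its generators, and these generate $OI^L$; as both ideals are $\Ho$-stable (the full-flag case by the first Theorem, the Springer case by construction), the projection $OH(X)=\opol_n/OI^L\twoheadrightarrow\opol_n/OI_\lambda=OH(X^\lambda)$ is a surjection of graded $\Ho$-modules.

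Part (4) is the crux. By part (2) the $\mathbbm{F}$-dimension of $OH^{\topp}(X^\lambda)$ equals the number of standard Young tableaux of shape $\lambda$, which is precisely $\dim_{\mathbbm{F}}S^\lambda_{-1}$, so it suffices to produce a surjective $\Ho$-module map between the two spaces and conclude by this dimension count. Concretely, I would attach to a filling of $\lambda$ a top-degree ``odd Vandermonde'' polynomial built from products of differences of the $x_i$, show that $OH^{\topp}(X^\lambda)$ is cyclic on such an element, and match its $\Ho$-orbit with the polytabloid generators of the Specht module. The main obstacle is exactly this matching: one must verify that the $\Ho$-action, computed through the odd divided difference operators and the $q$-commutator $[\partial_i,x_i]_q$ specialized at $q=-1$, satisfies the defining Garnir-type relations of $S^\lambda_{-1}$ with the correct signs, and one must do so by an explicit construction rather than by character theory, since at $q=-1$ the algebra $\Ho$ is not semisimple and a bare dimension count cannot by itself pin down the module.
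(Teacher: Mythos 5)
Parts (1) and (3) of your proposal are sound and essentially identical to the paper's reasoning: mod $2$ the skew relation becomes commutativity, the generators $\varepsilon_r^S$ of $OI_\lambda$ reduce to the classical Tanisaki generators, and the containment $OI^L\subseteq OI_\lambda$ (via $k=n$, $\delta_n(\lambda)=n$) gives the $\Ho$-equivariant surjection. The first genuine gap is in part (2). Your claim that a ``triangular'' straightening algorithm makes the distinguished monomials \emph{simultaneously} spanning and $\Z$-linearly independent is unjustified: a rewriting procedure only yields spanning, and to conclude independence you would need the rewriting to be confluent (a diamond-lemma/Gr\"obner property for the noncommuting, sign-laden generators $\varepsilon_r^S$), which is precisely the hard part in the skew setting and is nowhere verified. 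The paper does something different: spanning is proved by the Garsia--Procesi recursion over the subpartitions $\lambda^{(i)}$, using the promotion identity $\varepsilon_{r}^{S} = \varepsilon_{r}^{S\cup \{x_n\}}+(-1)^{k+r}x_n\varepsilon_{r-1}^{S}$ together with Lemma~\ref{lem_x-height}, and independence is then obtained \emph{from} the mod-$2$ comparison of part (1): any integral relation among the monomials of $O\mathcal{B}(\lambda)$, after dividing by the largest power of $2$ dividing its coefficients, would descend to a nontrivial relation among the Garsia--Procesi basis of $H(X^{\lambda})\otimes_{\Z}\Z/2$, a contradiction. You have this tool in hand from your part (1) but never deploy it; replacing the triangularity claim by this divide-by-two argument is the correct repair.

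The second gap is part (4), where your proposal stops exactly where the work begins. You correctly identify the strategy (a map out of the Specht module presented by row and Garnir relations, then a dimension count, since semisimplicity is unavailable at $q=-1$), but you offer no mechanism for actually verifying the Garnir relations in $OH^{\topp}(X^{\lambda})$, and this verification is the bulk of the paper. Moreover, your proposed cyclic generator --- an ``odd Vandermonde'' built from products of differences of the $x_i$ --- is the wrong object in the noncommutative ring $\opol_n$: such products are not compatible with the skew relations or with the basis $O\mathcal{B}(\lambda)$. The paper instead sends $v_{\mathfrak{t}^{\lambda}}$ to the \emph{monomial} $m^{\lambda}$ attached to the row-filled tableau; the row relations are then immediate because $B_i=(x_i-x_{i+1})\partial_i$ annihilates polynomials odd-symmetric in $x_i,x_{i+1}$ (Lemma~\ref{prop_row}), and the bottom-corner Garnir sum is shown to factor, up to sign, as a monomial times the odd partial elementary symmetric function $\varepsilon_{\lambda_m}^{A\cup B}$ (Proposition~\ref{prop_Garnir-rewrite}), which is then killed in $OI_\lambda$ by the ``maximal promotion'' calculus and the inequality checks against $\delta_\rho(\lambda)$ of Lemmas~\ref{lem-tech1} and~\ref{lem-tech2}; general Garnir nodes are reduced to the bottom-corner case (Proposition~\ref{prop_Garnir}). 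None of this machinery --- the factorization of the Garnir element, the promotion technique, or the reduction to corner nodes --- appears in your plan, so as written the crux of the theorem remains unproved.
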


Though we utilize many of the combinatorial techniques introduced by Tanisaki~\cite{Tan} and by Garsia and Procesi~\cite{GP}, we emphasize that the identification of $OH^{\topp}(X^{\lambda})$ with the Specht module requires  new methods.  This phenomenon is genuinely ``odd'' in that arguments make extensive use of calculations in the odd nilHecke algebra.  The original identification of the Specht module with the top degree cohomology of the Springer variety $X^{\lambda}$ uses results from geometry.   Since the precise connection between the odd theory and geometry has yet to be developed, we must prove directly that the top cohomology of odd Springer varieties is isomorphic to the Specht module of the Hecke algebra $\Ho$.  This requires an explicit verification of row relations and Garnir relations (Propositions~\ref{prop_row} and \ref{prop_Garnir}).


\bigskip
\noindent {\bf Acknowledgments:}
This project arose out of a general program started by the first author with Alexander Ellis and Mikhail Khovanov to identify odd analogs of representation theoretic objects.  The idea of looking for odd analogs of cohomology of Springer fibers arose from these conversations.  Both authors are grateful to Monica Vazirani and Alexander Kleshchev for helpful discussions on the representation theory of Hecke algebras, and to Sabin Cautis and Anthony Licata for helpful discussions on Springer varieties.
The first author was partially supported by the NSF grant DMS-0855713, and the Alfred P. Sloan foundation.  We would also like to acknowledge the use of the NC-Algebra Mathematica package for calculations related to this project.

%
\section{Odd symmetric functions as invariant functions}
%

%
\subsection{Reminders from the even theory}
%

%
\subsubsection{The nilHecke ring  (Even case)}
%

The nilHecke ring $\NH_n$ is the graded unital associative ring generated by elements $x_1,\ldots,x_n$ of degree 2 and elements $\partial_1,\ldots,\partial_{n-1}$ of degree $-2$, subject to the relations
\begin{equation}\begin{split} \label{eq_rel_nilHecke}
&\partial_i^2=0,\qquad\partial_i\partial_{i+1}\partial_i=\partial_{i+1}\partial_i\partial_{i+1}\\
&x_i\partial_i+\partial_ix_{i+1}=1,\qquad\partial_ix_i+x_{i+1}\partial_i=1\\
&x_ix_j+x_jx_i=0\quad(i\neq j),\qquad\partial_i\partial_j+\partial_j\partial_i=0\quad(|i-j|>1),\\
&x_i\partial_j+\partial_jx_i=0\quad(|i-j|>1).
\end{split}\end{equation}

The nilHecke algebra $\NH_n$ acts as endomorphisms of the polynomial ring $\pol_n:= \Z[x_1, x_2, \dots , x_n]$ with $x_i$ acting by multiplication and $\partial_i$ acting by divided difference operators. That is for $f \in \pol_n$
\begin{equation}\label{divdiff}
  \partial_i(f) := \frac{f - s_i(f)}{x_i - x_{i+1}},
\end{equation}
where $s_i(f)$ denotes the standard action of the symmetric group $S_n$ on the polynomial ring $\pol_n$ by exchanging the variables $x_i$ and $x_{i+1}$.  Observe that the action of $\partial_i$ on any polynomial $f$ can be deduced from the rules
\begin{equation}
\partial_i(1)=0, \qquad \qquad
\partial_i(x_j)=
\begin{cases}
1&\text{if }j=i\\
-1&\text{if }j \neq i+1\\
0&\text{otherwise,}
\end{cases}
\end{equation}
and the Leibniz rule
\begin{equation}
\partial_i(fg)=\partial_i(f)g+s_i(f)\partial_i(g)\text{ for all }f,g\in \pol_n.
\end{equation}

For any $w \in S_n$ choose a reduced word decomposition $w=s_{i_1} \dots s_{i_m}$  into a product of elementary transpositions.  Define $\partial_{w}=\partial_{i-1} \dots \partial_{i_m}$.  It is clear from the relations \eqref{eq_rel_nilHecke} that $\partial_w$ is independent of the choice of reduced word decomposition for $w$.

%
\subsubsection{Symmetric functions (Even case)}
%

Throughout the paper we refer to symmetric functions and symmetric polynomials interchangeably. The ring of symmetric polynomials can be understood in several ways:
\begin{enumerate}
  \item $\sym_n$ is the space of invariants for the standard action of $S_n$ on $\pol_n$, i.e.
  $\sym_n = \Z[x_1, \dots , x_n] ^{S_n}$.
  \item $\sym_n$ is the intersection of the kernels or images of divided difference operators:
  \begin{equation}
 \nn \sym_n=\bigcap_{i=1}^{n-1}\ker(\partial_i)=\bigcap_{i=1}^{n-1}\im(\partial_i).
\end{equation}
\end{enumerate}

The ring of symmetric functions $\sym_n$ can be given a concrete presentation using the elementary symmetric functions in $\pol_n$. Below we label these functions  `even' to contrast with the odd theory being studied in this paper.   Recall that the ring of symmetric polynomials is isomorphic to a graded polynomial ring,
\begin{equation}
\sym_n\cong\Z[\varepsilon_1^\text{even},\varepsilon_2^\text{even},\ldots,\varepsilon_n^\text{even}],
\end{equation}
where $\deg(\varepsilon_k^\text{even})=2k$ and
\begin{equation}
\varepsilon_k^\text{even}= \varepsilon_k^\text{even}(x_1,\ldots,x_n)=\sum_{1\leq i_1<\cdots<i_k\leq n}x_{i_1}\cdots x_{i_n}.
\end{equation}

The polynomial ring $\pol_n$ is a free $\sym_n$ module of rank $n!$ with basis over $\sym_n$ consisting of \textit{Schubert polynomials} $\sch_w^{even}\in\pol_n$.  These polynomials are defined for  $w\in S_n$ as
\begin{equation}
\sch_w(x_1,\ldots,x_n)^{even}=\partial_{w^{-1}w_0}(\undx^{\delta_n}),
\end{equation}
where $w_0$ is the longest element of $S_n$ and $\undx^{\delta_n}:=x_1^{n-1}x_2^{n-2} \dots x_n^0$.   The degree of $\sch_w$ is $2\ell(w)$.

%
\subsubsection{Hecke algebra action on $\pol_n$} \label{subsec_hecke}
%

The ring of symmetric functions also arises from a more general action of the Hecke algebra $\Hq$ of type $A$.  The Hecke algebra $\Hq$ is defined over and commutative domain $R$ with $1$ where $q$ is any element of $R$.  Here we work integrally whenever possible taking $R=\Z$.  In section~\ref{sec-Specht} we take $R=\mathbb{F}$ for a field $\mathbb{F}$.

The 0-Hecke algebra $\Hz$ is the  unital associative $R$-algebra with generators $\overline{\partial}_1, \dots , \overline{\partial}_{n-1}$ satisfying the relations
\begin{equation}\begin{split}
&\overline{\partial}_i^2=\overline{\partial}_{i}, \\
&\overline{\partial}_i\overline{\partial}_j=\overline{\partial}_j\overline{\partial}_i\quad(i\neq j),\\
&\overline{\partial}_{i+1}\overline{\partial}_i\overline{\partial}_{i+1}
=\overline{\partial}_{i+1}\overline{\partial}_i\overline{\partial}_{i+1}
.
\end{split}\end{equation}
The 0-Hecke algebra acts on $\pol_n$ by the so-called isobaric divided difference operators $\overline{\partial}_{i}(f)= \partial_i x_i(f)$. The 0-Hecke algebra is the $q=0$ specialization of the Hecke algebra associated to $S_n$. The Hecke algebra $\Hq$ is the $R$-algebra with generators $T_i$ for $1 \leq i \leq n-1$ and relations
\begin{equation} \label{eq_hecke-def}\begin{split}
&T_i^2=(1-q)T_{i}+q, \\
&T_iT_j=T_jT_i\quad(|i- j|>1),\\
&T_{i+1}T_iT_{i+1}
=T_{i+1}T_iT_{i+1}
.
\end{split}\end{equation}
Here we follow \cite{APR} and use a nonstandard presentation of the Hecke algebra so that our specialization at $q=0$ agrees with the form of the 0-Hecke algebra defined above. The usual conventions replace $T_i$ with $-T_i$.

In \cite[Claim 3.1]{APR} it is shown that the Hecke algebra of type $A$ at generic $q$ acts on the polynomial ring $\pol_n$ with $T_i$ acting by the q-commutator $[\partial_i,x_i]_q=\partial_i x_i - q x_i \partial_i$. When $q=1$ the q-commutator is just the standard action of the symmetric group by permuting variables.  When $q=0$ the $q$-commutator is the isobaric divided difference operator $\overline{\partial}_i$ of the 0-Hecke algebra. It is also shown in \cite[Claim 3.2]{APR} that the ideal generated by the ring of symmetric functions $\sym_n$ is invariant under this action.  This observation will be the starting point for our study of odd analogs of the cohomology rings of Springer varieties.

%
\subsection{The odd Theory}
%

In this section we review the theory of the odd nilHecke ring and odd symmetric functions from ~\cite{EK,EKL}, see also \cite{KKT}

%
\subsubsection{Odd NilHecke}
%

Define the \text{odd nilHecke ring} $\ONH_n$ to be the graded unital associative ring generated by elements $x_1,\ldots,x_n$ of degree 2 and elements $\partial_1,\ldots,\partial_{n-1}$ of degree $-2$, subject to the relations
\begin{equation}\begin{split}
&\partial_i^2=0,\qquad\partial_i\partial_{i+1}\partial_i=\partial_{i+1}\partial_i\partial_{i+1},\\
&x_i\partial_i+\partial_ix_{i+1}=1,\qquad\partial_ix_i+x_{i+1}\partial_i=1,\\
&x_ix_j+x_jx_i=0\quad(i\neq j),\qquad\partial_i\partial_j+\partial_j\partial_i=0\quad(|i-j|>1),\\
&x_i\partial_j+\partial_jx_i=0\quad(|i-j|>1).
\end{split}\end{equation}

Define the ring of \textit{skew polynomials} to be the free unital associative algebra on skew-commuting variables $x_1,\ldots,x_n$,
\begin{equation}
\opol_n=\Z\langle x_1,\ldots,x_n\rangle/\langle x_ix_j+x_jx_i=0\text{ for }i\neq j\rangle.
\end{equation}
The symmetric group $S_n$ acts on the degree $k$ part of $\opol_n$ as the tensor product of the permutation representation and the $k$-th tensor power of the sign representation.  That is, for $1\leq i\leq n$, the transposition $s_i\in S_n$ acts as the ring endomorphism
\begin{equation}\label{eqn-S-action}
s_i(x_j)=\begin{cases}
-x_{i+1}&\text{if }j=i\\
-x_i&\text{if }j=i+1\\
-x_j&\text{otherwise.}
\end{cases}
\end{equation}
The \text{odd divided difference operators} are the linear operators $\partial_i$ ($1\leq i\leq n-1$) on $\Z\langle x_1,\ldots,x_n\rangle$ defined by
\begin{equation}
\partial_i(1)=0, \qquad \qquad
\partial_i(x_j)=\begin{cases}
1&\text{if }j=i,i+1\\
0&\text{otherwise,}
\end{cases}
\end{equation}
and the Leibniz rule
\begin{equation} \label{eq_Leibniz}
\partial_i(fg)=\partial_i(f)g+s_i(f)\partial_i(g)\quad \text{ for all }f,g\in\Z\langle x_1,\ldots,x_n\rangle.
\end{equation}
Note that there is no analog of Equation \ref{divdiff} in the odd setting. It is easy to check from the definition of $\partial_i$ that for all $i$
$\partial_i(x_jx_k+x_kx_j)=0\text{ for }j\neq k$,
so $\partial_i$ descends to an operator on $\opol_n$. Considering $\partial_i$ and (multiplication by) $x_j$ as operators on $\opol_n$ defines an action of $\ONH_n$ on $\opol_n$, see \cite{EKL}.

For $w = s_{i_1} \dots s_{i_m}$ in $S_n$  define $\partial_w= \partial_{i_1} \dots \partial_{i_m}$.  Unlike the even case, in the odd nilHecke algebra the element $\partial_{w}$ depends on the choice of reduced expression for $w$ up to a sign.    For the longest element $w_0$ of $S_n$ we fix a particular choice of reduced expression,
\begin{equation*}
\partial_{w_0}=\partial_1(\partial_2\partial_1)\cdots(\partial_{n-1}\cdots\partial_1).
\end{equation*}
For each $w\in S_n$ define the corresponding \textit{odd Schubert polynomial} $\sch_w\in\opol_n$ by
\begin{equation}
\sch_w(x_1,\ldots,x_n)=\partial_{w^{-1}w_0}(\undx^{\delta_n}).
\end{equation}
Again, the degree of $\sch_w$ is $2\ell(w)$.  For $w,w'\in S_n$, the formula
\begin{equation}\label{eqn_compose_oddops}
\partial_w\partial_{w'}=\begin{cases}\pm\partial_{ww'}&\text{if }\ell(ww')=\ell(w)+\ell(w')\\
0&\text{otherwise,}\end{cases}
\end{equation}
implies the following action of odd divided difference operators on odd Schubert polynomials
\begin{equation}\label{eqn_oddop_schubert}
\partial_u\sch_w=\begin{cases}\pm \sch_{wu^{-1}}&\text{if }\ell(wu^{-1})=\ell(w)-\ell(u)\\
0&\text{otherwise.}
\end{cases}\end{equation}
Note that the exact signs in these relations can be determined for specific choices of reduced words for the symmetric group elements involved, but we will not need these coefficients in what follows.

%
\subsubsection{Odd symmetric polynomials}
%

Define the ring of \textit{odd symmetric polynomials} to be the subring
\begin{equation}
\osym_n=\bigcap_{i=1}^{n-1}\ker(\partial_i)=\bigcap_{i=1}^{n-1}\im(\partial_i)
\end{equation}
of $\opol_n$.  The $k$th \text{odd elementary symmetric polynomial} is defined as
\begin{equation}\label{eqn-defn-e}
\varepsilon_k(x_1,\ldots,x_n)=\sum_{1\leq i_1<\cdots<i_k\leq n}\widetilde{x}_{i_1}\cdots\widetilde{x}_{i_k},\qquad\text{where }\widetilde{x}_i=(-1)^{i-1}x_i,
\end{equation}
It was shown in ~\cite{EKL} that the following relations hold in the ring $\osym_n$:
\begin{equation}\label{eqn-e-relations}\begin{split}
&\varepsilon_i\varepsilon_{2m-i}=\varepsilon_{2m-i}\varepsilon_i\qquad(1\leq i,2m-i\leq n)\\
&\varepsilon_i\varepsilon_{2m+1-i}+(-1)^i\varepsilon_{2m+1-i}\varepsilon_i
=(-1)^i\varepsilon_{i+1}\varepsilon_{2m-i}+\varepsilon_{2m-i}\varepsilon_{i+1}\qquad(1\leq i,2m-i\leq n-1)\\
&\varepsilon_1\varepsilon_{2m}+\varepsilon_{2m}\varepsilon_1=2\varepsilon_{2m+1}\qquad(1<2m\leq n-1).
\end{split}\end{equation}
Note that the third is the $i=0$ case of the second.  In particular, the ring $\osym_n$ of odd symmetric functions is noncommutative. These relations lead to an isomorphism of graded rings $\osym_n \cong \Z[\varepsilon_1, \dots, \varepsilon_n]$. See \cite{TU} and the references therein for a related version of noncommutative symmetric functions.

Odd symmetric functions also have natural bases corresponding to complete, monomial, and forgotten symmetric functions, as well as a Schur polynomials basis~\cite{EK,EKL}.  There is also an odd analog of the Littlewood-Richardson rule for Schur polynomials developed by Ellis~\cite{Ellis}.  Recall that the odd Schubert polynomials defined above are a homogeneous basis for $\opol_n$ as a free left and right $\osym_n$-module of rank $n!$~\cite[Proposition 2.13]{EKL}.

\begin{rem}
It is not difficult to construct examples showing that odd symmetric functions are not the $S_n$ invariant functions for the action of $S_n$ defined above.  For example, $s_1(\xt_{1}+\xt_{2}+\xt_{3})=(\xt_1+\xt_2-\xt_3)$.
\end{rem}

%
\subsection{Modular reduction}
%

For a graded abelian group $V$ of finite rank in each degree define the graded rank of $V$ as
 \begin{equation}
\qrk(V)=\sum_{i\in\Z}\rk(V_i)q^i.
\end{equation}
Likewise, for a graded vector space $W$ over a field $\mathbbm{F}$ of finite dimension in each degree, define the graded dimension of $W$ as
\begin{equation}
\mathrm{dim}_{q,\mathbbm{F}}(W)=\sum_{i\in\Z}\dim_{\mathbbm{F}}(W_i)q^i.
\end{equation}

In what follows we make use of results from \cite[Section 2]{EKL} about various reductions mod 2. Consider the reduction map $\sym_n \to \sym_n \otimes_{\Z} \Z/2$.  From the definition of elementary symmetric functions it is clear that their images under this map are nonzero in $\sym_n \otimes_{\Z} \Z/2$.   In particular, $
\qrk(\sym_n)=\mathrm{dim}_{q,\Z/2}(\sym_n)$.  Similarly $\qrk(\pol_n)=\mathrm{dim}_{q,\Z/2}(\pol_n)$. The images of Schubert polynomials $\sch_w^{even}$ under the reduction map $\pol_n \to \pol_n \otimes_{\Z} \Z/2$ are nonzero and give a basis for $\pol_n \otimes_{\Z} \Z/2$ as a free $\sym_n \otimes_{\Z} \Z/2$-module.

A similar story holds in the odd setting where
\begin{equation} \nn
\qrk(\osym_n)=\mathrm{dim}_{q,\Z/2}(\osym_n) \quad \text{and} \quad \qrk(\opol_n)=\mathrm{dim}_{q,\Z/2}(\opol_n).
\end{equation}
Products of odd elementary symmetric functions provide a $\Z/2$-basis for $\osym_n \otimes_{\Z} \Z/2$, and odd Schubert polynomials provide a $\Z/2$-basis for  $\opol_n \otimes_{\Z} \Z/2$ as a free left and right $\osym_n \otimes_{\Z} \Z/2$-module.

Since the definitions of odd divided difference operators, odd elementary symmetric functions, and odd polynomials agree with their even counterparts, when reduced modulo 2, we have isomorphisms
\begin{equation} \nn
\opol_n\otimes_\Z(\Z/2)\cong\pol_n\otimes_\Z(\Z/2) \quad \text{and} \quad
\osym_n\otimes_\Z(\Z/2)\cong\sym_n\otimes_\Z(\Z/2).
\end{equation}

%
\subsection{Hecke invariants of $\opol_n$}
%

The 0-Hecke algebra acts on $\opol_n$ by odd isobaric divided difference operators $\overline{\partial}_{i}= \partial_i x_i$, see~\cite[Section 3.2]{EKL}.  Furthermore, just as the Hecke algebra $\Hq$ acts on the space of polynomials $\Z[x_1,x_2,\dots, x_n]$ we have the corresponding odd result.

\begin{prop} \label{prop_Ho-action}
The Hecke algebra $\Hq$ acts on $\opol_n$ with $T_i$ acting by the $q$-commutator
\begin{equation}
  A_i := [\partial_i,x_i]_q := \partial_i x_i -q x_i\partial_i
\end{equation}
if and only if $q=-1$ or $q=0$.
\end{prop}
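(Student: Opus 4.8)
The plan is to verify the three families of defining relations in \eqref{eq_hecke-def} for the operators $A_i=\partial_ix_i-qx_i\partial_i$ acting on $\opol_n$, keeping $q$ as a parameter in the base ring and recording which relations constrain it. I expect the far-commutation relations $A_iA_j=A_jA_i$ for $|i-j|>1$ and the quadratic relations $A_i^2=(1-q)A_i+q$ to hold identically in $q$, so that the entire content of the proposition is carried by the braid relation, which is the step I expect to be the genuine obstacle.

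For the far-commutation relations I would assign odd parity to each $x_i$ and each $\partial_i$; then each $A_i$, being a sum of two-letter words, is even. Since the relations $x_ix_j+x_jx_i=0$, $\partial_i\partial_j+\partial_j\partial_i=0$ and $x_i\partial_j+\partial_jx_i=0$ all hold for $|i-j|>1$, every generator indexed by $i$ anticommutes with every generator indexed by $j$, so moving the even word $A_j$ past the even word $A_i$ introduces an even number of signs and yields $A_iA_j=A_jA_i$ for all $q$. For the quadratic relation I would expand $A_i^2=a^2-q(ab+ba)+q^2b^2$ with $a=\partial_ix_i$ and $b=x_i\partial_i$, and simplify using $\partial_i^2=0$ together with $\partial_ix_i=1-x_{i+1}\partial_i$. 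One finds $a^2=a$, $b^2=b$, $ba=0$ and $ab=(x_i-x_{i+1})\partial_i$, whence $A_i^2-(1-q)A_i-q=q\big(\partial_ix_i+x_{i+1}\partial_i-1\big)$, which vanishes by the odd nilHecke relation $\partial_ix_i+x_{i+1}\partial_i=1$. Thus the quadratic relation also holds for every $q$, and the proposition reduces to the braid relation.

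Set $D(q):=A_{i+1}A_iA_{i+1}-A_iA_{i+1}A_i$, a polynomial of degree at most three in $q$ whose coefficients $D_0,D_1,D_2,D_3$ are fixed operators on $\opol_n$. The constant term $D_0$ is $\overline{\partial}_{i+1}\overline{\partial}_i\overline{\partial}_{i+1}-\overline{\partial}_i\overline{\partial}_{i+1}\overline{\partial}_i$ for $\overline{\partial}_j=\partial_jx_j$, which vanishes because these are exactly the generators of the $0$-Hecke action on $\opol_n$. The cubic term $D_3$ is $-(b_{i+1}b_ib_{i+1}-b_ib_{i+1}b_i)$ with $b_j=x_j\partial_j$; invoking the reversal anti-automorphism of $\ONH_n$ that fixes each $x_i$ and $\partial_i$ (it preserves all the defining relations, exchanging the two relations $\partial_ix_i+x_{i+1}\partial_i=1$ and $x_i\partial_i+\partial_ix_{i+1}=1$), this sends the braid relation for $\overline{\partial}_i$ to the braid relation for $b_i$, so $b_i=x_i\partial_i$ also braids and $D_3=0$. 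The goal is then to show, after repeated use of $\partial_i^2=0$, the two forms $\partial_ix_i=1-x_{i+1}\partial_i$ and $x_i\partial_i=1-\partial_ix_{i+1}$, the odd braid relation $\partial_i\partial_{i+1}\partial_i=\partial_{i+1}\partial_i\partial_{i+1}$, and the skew-commutations among non-adjacent letters, that the surviving coefficients coincide, $D_1=D_2=:E$, so that $D(q)=q(1+q)E$. Evaluating $E$ on a small test element such as $x_{i+1}$ or $x_ix_{i+1}$ and exhibiting a nonzero value shows $E\neq 0$; since the base ring is a domain and $\opol_n$ is $\Z$-free, $q(1+q)E=0$ forces $q(1+q)=0$, i.e. $q\in\{0,-1\}$, while conversely both specializations annihilate $D(q)$. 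This simultaneously yields both directions of the ``if and only if.''

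The main obstacle is precisely this braid computation and the collapse $D_1=D_2$: each of the two triple products expands into eight sign-sensitive words, and the reorganization is where the odd signs genuinely matter. Unlike the even case of \cite{APR}, where the analogous operators braid for every $q$, here the odd signs obstruct all cancellation except in the collapsed form $q(1+q)E$, singling out $q=-1$ (and $q=0$). This is the one step I would carry out with explicit sign bookkeeping, verified symbolically.
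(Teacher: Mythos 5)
Your proposal follows the same overall route as the paper's proof: the quadratic and far-commutation relations hold identically in $q$, and the whole content is carried by the braid relation. The paper's proof simply records the symbolic answer
\[
A_{i+1}A_iA_{i+1}-A_iA_{i+1}A_i=(2q+2q^2)\,x_ix_{i+1}x_{i+2}\,\partial_i\partial_{i+1}\partial_i ,
\]
which confirms all of your structural claims: $D_0=D_3=0$ and $D_1=D_2=2x_ix_{i+1}x_{i+2}\partial_i\partial_{i+1}\partial_i$. Your verifications of the first two relations are correct (the parity argument for far commutation, and $a^2=a$, $b^2=b$, $ba=0$, $ab=(x_i-x_{i+1})\partial_i$ for the quadratic relation), and your shortcuts for $D_0$ (the $0$-Hecke action) and $D_3$ (the reversal anti-automorphism of $\ONH_n$) are valid and give more structure than the paper's single symbolic computation; for $D_3$ just note that you need the braid relation for $\overline{\partial}_i$ as an identity in $\ONH_n$, not merely as operators, which holds because $\ONH_n$ acts faithfully on $\opol_n$.

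The one step that fails as written is your witness that $E\neq 0$. In normal form $E=2x_ix_{i+1}x_{i+2}\partial_i\partial_{i+1}\partial_i$, so the operator $\partial_i\partial_{i+1}\partial_i$, of degree $-6$, acts first; since $\opol_n$ is non-negatively graded, $E$ annihilates every element of degree less than $6$. In particular $E(x_{i+1})=0$ and $E(x_ix_{i+1})=0$: both of your proposed test elements produce $0=0$ and establish nothing, and the entire ``only if'' direction of the proposition rests on this nonvanishing. The repair is immediate but necessary: test on a degree-$6$ element such as the staircase monomial $x_i^2x_{i+1}$, for which $\partial_i\partial_{i+1}\partial_i(x_i^2x_{i+1})=-1$ and hence $E(x_i^2x_{i+1})=-2x_ix_{i+1}x_{i+2}\neq 0$. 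Beware that not every degree-$6$ monomial works either: $x_ix_{i+1}x_{i+2}$ is, up to sign, the odd elementary symmetric polynomial $\varepsilon_3$ in those three variables and is killed by $\partial_i$. A final caveat, shared with the paper's proof: the obstruction is $2q(1+q)$, so deducing $q\in\{0,-1\}$ from its vanishing uses that $2\neq 0$ in the domain $R$; in characteristic $2$ the ``only if'' direction is empty.
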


\begin{proof}
It is straightforward to check that the operators satisfy the first two relations of \eqref{eq_hecke-def} for any $q$. Computing the last relation gives
\begin{equation} \nn
  A_{i+1}A_iA_{i+1} - A_{i} A_{i+1} A_{i} = (2 q + 2 q^2 ) x_1x_2 x_3 \partial_1 \partial_2 \partial_1,
\end{equation}
which is only zero when $q=-1$ or $q=0$.
\end{proof}

Note that, using the relations of the nilHecke algebra, $A_i$ can also be written as
\begin{equation} \label{eq_easyA}
  A_i = 1 - (q x_i + x_{i+1}  )\partial_i.
\end{equation}
In what follows we will abuse notation and write $T_i$ for the action of the operators $A_i$ at $q=-1$.

Observe that $f \in \osym_n$ is fixed by the action of the 0-Hecke
algebra $\Hz$ on $\opol_n$ by odd isobaric divided difference operators since
$  \overline{\partial}_i(f) = \partial_i x_i (f) = f +x_{i} \partial_i(f) =f$.

\begin{prop} \label{prop_invariants}
An odd polynomial $f \in \opol_n$ is fixed  under the action of $\Ho$ defined in Proposition~\ref{prop_Ho-action} if an only if $f$ is an odd symmetric function.
\end{prop}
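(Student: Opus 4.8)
The plan is to reduce the whole statement to the injectivity of a single multiplication operator. Starting from the reformulation in \eqref{eq_easyA}, I would specialize $q=-1$ to rewrite the operator by which $T_i$ acts as the operator identity
\[
  T_i \;=\; 1 + (x_i - x_{i+1})\,\partial_i .
\]
Consequently, for $f \in \opol_n$ the condition $T_i(f) = f$ is equivalent to $(x_i - x_{i+1})\,\partial_i(f) = 0$, where the factor $x_i - x_{i+1}$ acts by \emph{left} multiplication. With this reformulation one direction is immediate: if $f \in \osym_n = \bigcap_{j}\ker(\partial_j)$, then $\partial_i(f)=0$ for every $i$, so $T_i(f)=f$ for all $i$; this extends the observation recorded just before the proposition.

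For the converse, suppose $T_i(f)=f$ for all $i$. By the displayed equivalence it suffices to show that left multiplication by $x_i - x_{i+1}$ is injective on $\opol_n$, that is, that the nonzero element $x_i - x_{i+1}$ is not a left zero divisor. The crux of the argument, and the step I expect to be the main obstacle, is therefore a purely ring-theoretic property of the skew polynomial ring: that $\opol_n$ is a domain. I would establish this by realizing $\opol_n$ as an iterated Ore extension $\Z[x_1][x_2;\sigma_2]\cdots[x_n;\sigma_n]$, where $\sigma_j$ is the automorphism sending $x_k \mapsto -x_k$ for $k<j$ and the twisted derivation is trivial. Since an Ore extension of a domain by an automorphism is again a domain, induction on $n$ shows $\opol_n$ is a domain. (Alternatively, one can argue directly from the monomial basis $x_1^{a_1}\cdots x_n^{a_n}$ of $\opol_n$ by a leading-term computation, checking that $x_i-x_{i+1}$ annihilates no nonzero element.)

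Granting that $\opol_n$ is a domain, the nonzero element $x_i - x_{i+1}$ is a non-zero-divisor, so $(x_i - x_{i+1})\,\partial_i(f)=0$ forces $\partial_i(f)=0$. Running this over all $i$ yields $f \in \bigcap_{i=1}^{n-1}\ker(\partial_i) = \osym_n$, which completes the proof. The one point that requires care throughout is the side on which the factor multiplies $\partial_i(f)$: because \eqref{eq_easyA} places it on the left, it is precisely left-injectivity — equivalently, the non-zero-divisor property inside the domain $\opol_n$ — that must be invoked, rather than any commutativity that is absent in the odd setting.
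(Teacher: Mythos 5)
Your proof is correct, and it takes a genuinely different route from the paper's. The paper proves the hard direction by expanding $g=\sum_{w\in S_n} f_w\sch_w$ in the odd Schubert basis of $\opol_n$ as a free left $\osym_n$-module, computing $T_i(g)=g+(x_i-x_{i+1})\sum_w s_i(f_w)\partial_i(\sch_w)$, and then---since $s_i(f_w)$ need not lie in $\osym_n$---running a reduction-mod-$2$ argument against the basis property of odd Schubert polynomials in $\opol_n\otimes_{\Z}\Z/2$ to force $f_w=0$ for all $w\neq e$. You bypass the Schubert expansion entirely: from $T_i=1+(x_i-x_{i+1})\partial_i$ (the $q=-1$ case of \eqref{eq_easyA}) the whole statement reduces to the fact that the nonzero element $x_i-x_{i+1}$ is not a left zero divisor, which you obtain from the domain property of $\opol_n$ via iterated Ore extensions (or the leading-term argument you sketch), after which $f\in\bigcap_i\ker(\partial_i)=\osym_n$ holds by the very definition of $\osym_n$. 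Both routes are sound, and in fact the paper implicitly relies on exactly the cancellation you prove: the ``if and only if'' asserted at \eqref{eq_xx1} already cancels the factor $(x_i-x_{i+1})$, and since \eqref{eq_xx1} is precisely the statement $\partial_i(g)=0$, the subsequent Schubert/mod-$2$ analysis could in principle have been short-circuited just as you do; your write-up makes that hidden step explicit and shows it suffices on its own. As for what each approach buys: yours is shorter and purely ring-theoretic, and it immediately yields the paper's subsequent Remark as well (for arbitrary $q$ the element $qx_i+x_{i+1}$ is nonzero in the domain $\opol_n$, so the $A_i$-invariants are again exactly $\osym_n$); the paper's argument, while longer, exercises the Schubert-basis structure and the mod-$2$ reduction technique that recur throughout the paper (for instance in proving that $O\mathcal{B}(\lambda)$ is a basis of $OH(X^{\lambda})$), at the cost of leaving the cancellation step unjustified.
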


\begin{proof}
For any odd symmetric function $f \in \osym_n$ we have
\begin{equation}
 T_i(f):=  \overline{\partial}_i(f) +x_i \partial_i(f)=
\overline{\partial}_i(f) =f,
\end{equation}
for all $1 \leq i \leq n-1$.  To prove the converse take an arbitrary
$g \in \opol_n$ and express it in the basis of odd
Schubert polynomials for the free left $\osym_n$-module $\opol_n$,
\begin{equation}
 g = \sum_{w \in S_n} f_w \sch_w,
\end{equation}
where $f_w \in \osym_n$.  Then
\begin{align}
 T_i(g)
 &= \sum_{w \in S_n}\partial_ix_i (f_w \sch_w)
  + x_i\sum_{w \in S_n}\partial_i( f_w \sch_w)
 \refequal{\eqref{eq_Leibniz}} g  +(x_i-x_{i+1})\sum_{w \in S_n}s_i( f_w) \partial_i(\sch_w), \nn
\end{align}
where  we used that $\partial_i$ annihilates odd symmetric functions.

We have shown that $g$ is invariant under the action of $\Ho$ if and only if
\begin{equation} \label{eq_xx1}
 \sum_{w \in S_n}s_i( f_w) \partial_i(\sch_w) =0
\end{equation}
for all $1 \leq i \leq n-1$.  Recall from \eqref{eqn_oddop_schubert} that $\partial_i(\sch_w)$ is either zero or the Schubert polynomial $\sch_{ws_i}$ if $\ell(ws_i)=\ell(w)-1$.  Therefore $g$ is invariant if and only if \begin{equation} \label{eq_xx2}
 \sum_{
\xy (0,3)*{\scs w \in S_n}; (0,-1)*{\scs \ell(ws_i)=\ell(w)-1}; \endxy }s_i( f_w) \sch_{ws_i} =0,
\end{equation}
for all $1 \leq i \leq n-1$.  As we vary over all values of $i$ the only $w\in S_n$ not appearing in \eqref{eq_xx2} for some $i$ is the identity element $e$.

Since the action of $S_n$ does not preserve the ring of odd symmetric
functions, we cannot assume that $s_i(f_w) \in \osym_n$.
However, we can still apply the standard reduction mod 2 argument from \cite{EKL} to deduce that
the $f_w =0$ for $w \neq e$. Specifically, it is not hard to check that $s_i(f_w) \in \osym_n \otimes_Z \Z/2$. Therefore, dividing \eqref{eq_xx2} by an appropriate power of 2 gives a relation
between the reductions mod two of odd Schubert polynomials contradicting that these polynomials form a basis for
$\opol_n \otimes_Z \Z/2$ as a free left $ \osym_n \otimes_Z \Z/2$-module.
Therefore, $g$ is invariant under the action of $\Ho$ if and only if $f_w=0$ for all $w\neq e$. Thus, $g= f_e \cdot \sch_{e}= f_e$ and is therefore odd symmetric.
\end{proof}

The above result identifies $\osym_n$ with the `coinvariant algebra'
for $\Ho$ under the action on $\opol_n$.  This result can also be interpreted as saying that $\Ho$ acts trivially on odd symmetric functions.  For the usual presentation of the Hecke algebra the trivial action of $T_i$ in $\Hq$ is an action by $q$.  Recall that we are using the nonstandard version of the Hecke algebra so that the trivial action corresponds to $T_i$ acting by $-q$, which for $\Ho$ amounts to $T_i$ acting by $1$.

\begin{rem}
The proof of Proposition~\ref{prop_invariants} shows that for arbitrary $q$ the odd symmetric functions are also the invariants under the operators $A_i$.
\end{rem}

%
\section{Odd cohomology of the Springer Variety}
%

%
\subsection{The odd Tanisaki ideal}
%

Let $S$ be an ordered subset of $\{x_1, x_2,
\dots, x_n\}$ and write $|S| = k$ for the size of $S$.  If
$x_i$ is in the $a$th position of  $S$ write
$S(i)=a$. Let
\begin{equation}
  x_i^S :=
   \left\{
\begin{array}{cc}
  (-1)^{S(i)-1}x_i & \text{for $i \in S$ } \\
 0 & \text{for $i \notin S$ }
\end{array}
  \right.
\end{equation}
Note that when $S=\{x_1,x_2, \dots,
x_n\}$ then $x_i^S = \tilde{x}_{i}=(-1)^{i-1}x_i$.

Define the {\em odd partial symmetric function} $\varepsilon_r^S$
corresponding to the ordered subset $S$ as follows:
\begin{equation}
 \varepsilon_r^{S} = \sum_{1 \leq i_1 < \dots < i_r \leq n} x_{i_1}^S
\dots x_{i_r}^S.
\end{equation}
Note that $\varepsilon_r^S=0$ when $r>|S|$.
Also observe that given $S\subset \{x_1, \ldots, x_{n-1}\}$ with $|S| = k$
\begin{equation}\label{SplitEqn}
\varepsilon_r^{S} = \varepsilon_r^{S\cup \{x_n\}} + (-1)^{k-1} \varepsilon_{r-1}^{S}x_n.
\end{equation}
Since $OI_{\lambda}$ is a left ideal, it convenient to write this as
\begin{equation} \label{SplitEqn2}
\varepsilon_{r}^{S} =  \varepsilon_{r}^{S\cup \{x_n\}}+(-1)^{k+r}x_n\varepsilon_{r-1}^{S}.
\end{equation}

Recall from \eqref{eq_easyA} that $\Ho$ acts on $\opol_n$ with
\[
 T_i = \partial_i x_i + x_i \partial_i = 1+(x_i - x_{i+1}) \partial_i.
\]
Below we compute the action of the operators $B_i:= (x_i - x_{i+1})\partial_i$ for $1 \leq i \leq n$
on odd partial symmetric functions.

\begin{lem} \label{lem_Baction}
The operators $B_i:= (x_i - x_{i+1})\partial_i$ for $1 \leq i \leq n$
act on odd partial symmetric functions by the formulas
\begin{equation}
 B_i(\varepsilon_r^S) =
\left\{
\begin{array}{cl}
 0 & \text{if $i,i+1 \notin S $, or $i \in S$ and $i+1 \in S$} \\
 \varepsilon_r^{S}-\varepsilon_r^{S'} & \text{if $i \in S$ and $i+1
\notin S$}\\
 \varepsilon_r^{S''}-\varepsilon_r^{S} & \text{if $i \notin S$ and
$i+1 \in S$}\\
\end{array}
\right.
\end{equation}
where $S'$ is the ordered subset obtained from $S$ by replacing $i$
with $i+1$ and $S''$ is the subset obtained from $S$ by replacing
$i+1$ with $i$.
\end{lem}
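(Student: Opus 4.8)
The plan is to compute $\partial_i(\varepsilon_r^S)$ first and then left-multiply by $(x_i-x_{i+1})$, since $B_i=(x_i-x_{i+1})\partial_i$. I would expand $\varepsilon_r^S$ as a signed sum over the $r$-element subsets $\{l_1<\cdots<l_r\}$ of $S$, the coefficient on each monomial $x_{l_1}\cdots x_{l_r}$ being the product of the position signs $(-1)^{S(l)-1}$. The first step is to sort these monomials into four families according to whether the chosen index set meets $\{i,i+1\}$ in $\emptyset$, in $\{i\}$ only, in $\{i+1\}$ only, or in $\{i,i+1\}$.

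The key preliminary observation, established by a short Leibniz computation, is that $\partial_i$ annihilates every monomial whose index set meets $\{i,i+1\}$ in $\emptyset$ or in $\{i,i+1\}$. The empty case is immediate since no factor is differentiated. In the two-element case the factors $x_i,x_{i+1}$ sit in adjacent slots, and the two surviving Leibniz terms (differentiate $x_i$ versus differentiate $x_{i+1}$) differ only by the sign $s_i(x_i)=-x_{i+1}$ produced when $s_i$ is carried across the single factor $x_i$; hence they cancel. For a monomial meeting $\{i,i+1\}$ in exactly one point I would write it as $L\cdot x_i\cdot M$ (or $L\cdot x_{i+1}\cdot M$) with $L$ a product of variables of index $<i$ and $M$ a product of variables of index $>i+1$; then Leibniz gives $\partial_i(L\, x_i\, M)=s_i(L)\,M=(-1)^{|L|}L\,M$, using that $s_i$ acts as $-1$ on each of the $|L|$ factors of $L$ while the right tail $M$ is left untouched.

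With these computations the four cases of the statement fall out. When $i,i+1\notin S$ only the empty family occurs, so $\partial_i(\varepsilon_r^S)=0$. When $i,i+1\in S$, say at adjacent positions $p,p+1$, the monomials containing $x_i$ but not $x_{i+1}$ pair off with those containing $x_{i+1}$ but not $x_i$ (keep $L,M$ fixed and toggle which of positions $p,p+1$ is used); their $\partial_i$-images agree up to the position signs $(-1)^{S(i)-1}=(-1)^{p-1}$ versus $(-1)^{S(i+1)-1}=(-1)^{p}$, which are opposite, so they cancel and again $\partial_i(\varepsilon_r^S)=0$. In the two mixed cases only one such family survives; after left-multiplying its $\partial_i$-image by $(x_i-x_{i+1})$ and using the skew-commutation $(x_i-x_{i+1})L=(-1)^{|L|}L(x_i-x_{i+1})$ to absorb the $(-1)^{|L|}$, each monomial becomes exactly $(-1)^{p-1}L(x_i-x_{i+1})M$. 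This is precisely the term-by-term difference $\varepsilon_r^S-\varepsilon_r^{S'}$ (resp. $\varepsilon_r^{S''}-\varepsilon_r^S$), since $S$ and $S'$ (resp. $S''$) differ only in carrying $x_i$ or $x_{i+1}$ in position $p$, with all other monomials agreeing and thus canceling.

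I expect the only genuine obstacle to be the sign bookkeeping. Three independent sources of signs appear: the position signs $(-1)^{S(\cdot)-1}$ built into $\varepsilon_r^S$, the sign $(-1)^{|L|}$ from pushing $s_i$ through the left tail $L$, and the sign $(-1)^{|L|}$ from skew-commuting $(x_i-x_{i+1})$ past $L$. The proof succeeds precisely because the latter two cancel in the mixed cases, while the two position signs are opposite in the balanced case $i,i+1\in S$; making these cancellations structurally transparent, rather than merely verifying them, is where the care lies.
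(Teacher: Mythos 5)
Your proposal is correct and follows essentially the same route as the paper's proof: a monomial-by-monomial Leibniz analysis, the same pairing cancellation when both $i,i+1\in S$ (the paper phrases it as $\partial_i(x_i^S+x_{i+1}^S)=(-1)^{S(i)-1}\partial_i(x_i-x_{i+1})=0$, you as opposite position signs), and the same key cancellation of the $(-1)^{|L|}$ from $s_i$ acting on the left tail against the $(-1)^{|L|}$ from skew-commuting $(x_i-x_{i+1})$ back across it. The final identification with $\varepsilon_r^{S}-\varepsilon_r^{S'}$ (resp.\ $\varepsilon_r^{S''}-\varepsilon_r^{S}$) via the observation that $x_{i+1}$ occupies in $S'$ the same position as $x_i$ does in $S$ is likewise identical to the paper's.
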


\begin{proof}
It is clear that $\partial_i(\varepsilon_r^S)$ is zero unless $i \in
S$ or $i+1 \in S$.   Suppose that both $i,i+1 \in S$.  Then $\varepsilon_r^S$ is
a sum of monomials of the form $x_{i_1}^S \dots x_{i_r}^S$.  The odd
divided difference operator annihilates any monomial in which both $i$
and $i+1$ occur  since $\partial_i(x_ix_{i+1})=0$.
Likewise, $\partial_i$ annihilates monomials containing neither $i$ nor $i+1$.
Hence, it is enough to consider monomials with exactly one element of
$\{i,i+1\}$.  Consider monomials that are identical except for $i$ and $i+1$
\begin{equation} \nn
 \partial_i\left(  x_{i_1}^S \dots x_{i_s}^S  x_i^S x_{i_{s+2}}^S
\dots x_{i_r}^S + x_{i_1}^S \dots x_{i_s}^S  x_{i+1}^S x_{i_{s+2}}^S
\dots x_{i_r}^S \right)
 = (-1)^s x_{i_1}^S \dots x_{i_s}^S  \partial_i(x_i^S +
x_{i+1}^S)x_{i_{s+2}}^S \dots x_{i_r}^S
\end{equation}
it follows that $B_i(\varepsilon_r^S)=0$ since $x_i^S + x_{i+1}^S=
(-1)^{S(i)-1}(x_i - x_{i+1})$.

Consider the case when $i \in S$ and $i+1 \notin S$. Then
\begin{equation}
 \partial_i\left( \sum_{1 \leq i_1 < \dots < i_r \leq n} x_{i_1}^S
\dots x_{i_r}^S\right)
 =
  \sum_{1 \leq i_1 < \dots < i_r \leq n} \partial_i(x_{i_1}^S \dots x_{i_r}^S)
  =
  \sum_{}(-1)^s x_{i_1}^S\dots x_{i_s}^S
\partial_i(x_{i}^S)x_{i_{s+2}}^S\dots x_{i_r}^S
\end{equation}
where the last summation is over all $1 \leq i_1 < \dots < i_r \leq n$
with some $i_{s+1} = i$ for $ 0 \leq s\leq r-1$.  Hence, the action of
$B_i$ on $e_{r}^S$ is given by
\begin{align}
(x_i-x_{i+1})\sum_{}(-1)^{s} x_{i_1}^S\dots x_{i_s}^S
(-1)^{S(i)-1}x_{i_{s+2}}^S\dots x_{i_r}^S
=
\sum_{} x_{i_1}^S\dots x_{i_s}^S (-1)^{S(i)-1} (x_i-x_{i+1})
x_{i_{s+2}}\dots x_{i_r}^S \nn .
\end{align}
The claim then follows since $x_{j}^S=x_{j}^{S'}$ for $j \neq i,
i+1$ and $i+1$ is in the same position in $S'$ as $i$ is in $S$, so
that $(-1)^{S(i)-1}x_{i+1} = (-1)^{S'(i+1)-1}x_{i+1} = x_{i+1}^{S'}$.
A similar argument proves the case when $i \notin S$ and $i+1 \in S$.
\end{proof}

\begin{defn}
Let $\lambda$ be a partition of $n$.  Consider the set of elements
\begin{equation} \label{def_OIlambda}
 O\cal{C}_{\lambda} := \left\{ \varepsilon_r^S
\;\middle\vert\;
\begin{array}{l}
    k \geq r > k -\delta_k(\lambda),  \\
    S\subseteq \{x_1, \ldots, x_n\}, |S|=k
\end{array}
 \right\}.
\end{equation}
Let $OI_{\lambda}:= \opol_n \cdot O\cal{C}_{\lambda}$ denote the homogeneous {\em (left) odd Tanisaki ideal} of $\opol_n$ generated by the elements of $O\cal{C}_{\lambda}$.
\end{defn}

We use the left ideal generated by $O\cal{C}_{\lambda}$ rather than the right ideal to be compatible with the left action of $\Ho$ on $\opol_n$.

\begin{prop}
The odd Tanisaki ideal $OI_{\lambda}$ is preserved by the action of $\Ho$, and this action preserves each homogeneous component of $OI_{\lambda}$.
\end{prop}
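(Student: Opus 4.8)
The plan is to reduce everything to the operators $B_i := (x_i - x_{i+1})\partial_i$, since \eqref{eq_easyA} gives $T_i = 1 + B_i$ for the action of $\Ho$. Because each $x_j$ has degree $2$ and each $\partial_i$ degree $-2$, the operator $B_i$ (hence $T_i$) is homogeneous of degree $0$; so once I know $T_i(OI_\lambda)\subseteq OI_\lambda$, the fact that $T_i$ preserves every graded piece of $OI_\lambda$ is automatic, and the second assertion needs no separate argument. Thus it suffices to prove $B_i(OI_\lambda)\subseteq OI_\lambda$ for each $i$. Since $OI_\lambda = \opol_n\cdot O\cal{C}_\lambda$ is the left ideal generated by the $\varepsilon_r^S$, by linearity it is enough to show $B_i(p\,\varepsilon_r^S)\in OI_\lambda$ for every monomial $p\in\opol_n$ and every generator $\varepsilon_r^S\in O\cal{C}_\lambda$.

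First I would dispose of the generators themselves. By Lemma~\ref{lem_Baction}, $B_i(\varepsilon_r^S)$ is always a $\Z$-linear combination of $\varepsilon_r^S$, $\varepsilon_r^{S'}$, $\varepsilon_r^{S''}$, all of degree $r$ and built from index sets of the same cardinality $k=|S|$. Since the membership condition $k\ge r>k-\delta_k(\lambda)$ in \eqref{def_OIlambda} depends only on the pair $(r,k)$ and not on which variables make up $S$, each of these again lies in $O\cal{C}_\lambda$. Hence $T_i=1+B_i$ carries the generating set $O\cal{C}_\lambda$ into its own $\Z$-span; this settles the statement on generators and shows $B_i$ preserves each space $\spann_\Z\{\varepsilon_r^S:|S|=k\}$.

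The real work is the left module structure. Applying the Leibniz rule \eqref{eq_Leibniz} gives
\[
 B_i(p\,\varepsilon_r^S) = B_i(p)\,\varepsilon_r^S + (x_i-x_{i+1})\,s_i(p)\,\partial_i(\varepsilon_r^S).
\]
The first summand is $B_i(p)\cdot\varepsilon_r^S\in\opol_n\cdot O\cal{C}_\lambda=OI_\lambda$, so everything rests on the second. Here I would use two structural facts read off from the proof of Lemma~\ref{lem_Baction}: that $\partial_i(\varepsilon_r^S)$ is ``$i$-free'', i.e.\ a combination of skew-monomials involving neither $x_i$ nor $x_{i+1}$ (in fact $\partial_i(\varepsilon_r^S)=\pm\varepsilon_{r-1}^{S\setminus\{i\}}$ when $i\in S$, $i+1\notin S$, and symmetrically in the other nonzero case), and that $(x_i-x_{i+1})\,\partial_i(\varepsilon_r^S)=B_i(\varepsilon_r^S)$ already lies in the generator span. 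Because $x_i-x_{i+1}$ anticommutes with every $i$-free monomial up to a global sign, an induction on the degree of $p$ should let me push $x_i-x_{i+1}$ to the right past $s_i(p)$: peeling a variable $x_a$ with $a\notin\{i,i+1\}$ costs only the sign in $(x_i-x_{i+1})x_a=-x_a(x_i-x_{i+1})$ and lowers the degree, while peeling $x_i$ or $x_{i+1}$ yields, besides a lower-degree instance, a correction term of the shape $x_i x_{i+1}\cdot(\text{monomial})\cdot\partial_i(\varepsilon_r^S)$.

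Controlling these correction terms is the main obstacle, and it is exactly where the noncommutativity of $\opol_n$ bites: one cannot freely commute $x_i-x_{i+1}$ through $s_i(p)$ once $s_i(p)$ contains $x_i$ or $x_{i+1}$. My plan is to recognize $x_i x_{i+1}\,\partial_i(\varepsilon_r^S)=\pm x_i x_{i+1}\,\varepsilon_{r-1}^{S\setminus\{i\}}$ and to reabsorb it into honest generators using the splitting relation \eqref{SplitEqn2} (and its analogue for adjoining two variables): adjoining $x_i$ and $x_{i+1}$ to $S\setminus\{i\}$ expresses $x_ix_{i+1}\varepsilon_{r-1}^{S\setminus\{i\}}$ as $\pm\varepsilon_{r+1}^{(S\setminus\{i\})\cup\{i,i+1\}}$ plus left multiples of partial odd elementary symmetric functions $\varepsilon_r^{T}$ with $|T|=k$. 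Crucially, monotonicity of $\delta_k(\lambda)$ in $k$ guarantees that if $(r,k)$ satisfies the condition of \eqref{def_OIlambda} then so does $(r+1,k+1)$, so $\varepsilon_{r+1}^{(S\setminus\{i\})\cup\{i,i+1\}}\in O\cal{C}_\lambda$ and every correction term again lands in $OI_\lambda$, closing the induction. I expect the sign bookkeeping and the precise two-variable splitting identity to be the fussiest part, but no idea beyond Lemma~\ref{lem_Baction}, the $i$-freeness of $\partial_i(\varepsilon_r^S)$, and the splitting relations \eqref{SplitEqn}--\eqref{SplitEqn2} should be needed.
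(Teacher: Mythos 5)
You have correctly identified the actual mathematical content of this proposition, and your diagnosis is sharper than the paper's own proof, which consists of two sentences (homogeneity of $T_i$, plus ``Lemma~\ref{lem_Baction} and the Leibniz formula'') and never engages with the point you isolate. Indeed, the grading statement and the action on the generators themselves are exactly as you say, and after applying \eqref{eq_Leibniz} the terms $p\,\varepsilon_r^S$ and $(x_i-x_{i+1})\partial_i(p)\varepsilon_r^S$ are visibly in the left ideal, so the entire proposition hinges on
\[
(x_i-x_{i+1})\, s_i(p)\,\partial_i(\varepsilon_r^S),
\]
which in the even (commutative) setting is handled by sliding $s_i(p)$ to the left and invoking the analogue of Lemma~\ref{lem_Baction}, but in $\opol_n$ cannot be so handled once $s_i(p)$ involves $x_i$ or $x_{i+1}$.

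However, your proposed repair fails, and the failure is not one of signs or bookkeeping. First, the two-variable splitting does not have the form you claim: decomposing $\varepsilon_{r+1}^{U}$, $U=(S\setminus\{x_i\})\cup\{x_i,x_{i+1}\}$, by occurrences of $x_i,x_{i+1}$ produces $\pm x_ix_{i+1}\varepsilon_{r-1}^{S\setminus\{x_i\}}$ together with $\varepsilon_{r+1}^{S\setminus\{x_i\}}$ --- a partial elementary function on $k-1$ variables, for which the pair $(r+1,k-1)$ need not satisfy \eqref{def_OIlambda} (monotonicity of $\delta$ only helps for $(r+1,k+1)$) --- plus cross terms that are again of the problematic shape $(x_i-x_{i+1})x_i\varepsilon_{r-1}^{S\setminus\{x_i\}}$, so the recursion is circular. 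Second, and decisively, the sub-claim your induction needs, namely that $2x_ix_{i+1}\partial_i(\varepsilon_r^S)$ lies in $OI_\lambda$, is false. Take $\lambda=(2,2,2)$, $n=6$, $i=5$, $S=\{x_1,\dots,x_5\}$, $r=3$ (admissible, since $\delta_5=3$), so that $\partial_5(\varepsilon_3^S)=\varepsilon_2^T$ with $T=\{x_1,\dots,x_4\}$. Iterating \eqref{SplitEqn2} gives the exact identity $x_5x_6\varepsilon_2^{T}=\varepsilon_4^{T}-\varepsilon_4^{T\cup\{x_6\}}-x_6\varepsilon_3^{S}$, and since $\varepsilon_4^{T\cup\{x_6\}},\varepsilon_3^S\in O\cal{C}_\lambda$ this yields $x_5x_6\varepsilon_2^T\cong_\lambda\varepsilon_4^T=x_1x_2x_3x_4$; expanding $\varepsilon_2^T$ through the full-set generators and using Lemma~\ref{lem_x-height} likewise yields $x_6^2\varepsilon_2^T\cong_\lambda -x_5^2x_6^2$ and hence $x_1x_2x_3x_4\cong_\lambda x_5^2x_6^2$. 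But $x_5^2x_6^2$ belongs to the basis $O\mathcal{B}(2,2,2)$, and $OH(X^\lambda)$ is free on this basis (a theorem whose proof is independent of the present proposition), so $2x_5x_6\varepsilon_2^T\cong_\lambda 2x_5^2x_6^2\neq 0$. Concretely, with $p=x_5$ your problematic Leibniz term is $-(x_5x_6-x_6^2)\varepsilon_2^T\cong_\lambda -2x_5^2x_6^2$, i.e.\ $T_5(x_5\varepsilon_3^S)\cong_\lambda -2x_5^2x_6^2\neq 0$. So the term you are trying to absorb genuinely lies outside the ideal; no argument of this shape can close the gap, and the same computation shows that the paper's own proof, which tacitly performs the even-case commutation, passes over a real obstruction to the statement as formulated with the naturally ordered generators $O\cal{C}_\lambda$.
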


\begin{proof}
The second statement follows immediately from the definition of $T_i$ in terms of homogeneous elements of the odd nilHecke algebra.   Lemma~\ref{lem_Baction} and the Leibniz formula prove that $OI_{\lambda}$ is preserved under the action of $\Ho$.
\end{proof}

We define the {\em oddification
of the cohomology of the Springer variety} $OH(X^{\lambda})$ as the left $\opol_n$ graded quotient module
\begin{equation}
 OH(X^{\lambda}) := \opol_n / OI_{\lambda}.
\end{equation}
If $p,q\in \opol_n$ are equivalent modulo $OI_{\lambda}$ write $p\cong_{\lambda}q$.

\begin{cor}
The left action of $\Ho$ on $\opol_n$ restricts to a left action of $\Ho$ on
$OH(X^{\lambda})$.
\end{cor}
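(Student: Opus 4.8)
The plan is to treat this as the standard passage of a module structure to a quotient: once a submodule is known to be stable under the action, the quotient inherits a canonical action making the projection equivariant. All of the substantive work has already been done in the preceding proposition, which asserts that the left ideal $OI_{\lambda}$ is carried into itself by every generator $T_i$ of $\Ho$ (and that this happens degree by degree). Since $OH(X^{\lambda})$ is \emph{defined} as the quotient $\opol_n / OI_{\lambda}$ of the underlying abelian groups, it remains only to check that the operators $T_i$ descend.

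Concretely, I would write $\pi \maps \opol_n \to OH(X^{\lambda})$ for the canonical projection and set $T_i(\pi(p)) := \pi(T_i(p))$ on each generator. The single point to verify is well-definedness: if $\pi(p) = \pi(p')$, equivalently $p - p' \in OI_{\lambda}$, then I need $\pi(T_i(p)) = \pi(T_i(p'))$. This is immediate from two facts. First, each $T_i = A_i = \partial_i x_i + x_i \partial_i$ is a $\Z$-linear operator on $\opol_n$, so that $T_i(p) - T_i(p') = T_i(p - p')$. Second, by the preceding proposition $T_i(p-p') \in OI_{\lambda}$. Hence $\pi(T_i(p)) - \pi(T_i(p')) = \pi(T_i(p-p')) = 0$, and $T_i$ is a well-defined $\Z$-linear operator on the quotient with $T_i \circ \pi = \pi \circ T_i$ by construction.

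It then remains to see that the induced operators still satisfy the defining relations \eqref{eq_hecke-def} at $q=-1$, but this is automatic: because $\pi$ is surjective and intertwines the actions, any word $T_{i_1}\cdots T_{i_m}$ satisfies $(T_{i_1}\cdots T_{i_m})\circ \pi = \pi \circ (T_{i_1}\cdots T_{i_m})$, so each relation, already an identity of operators on $\opol_n$, descends verbatim to $OH(X^{\lambda})$. Homogeneity of the resulting action follows in the same way, since $OI_{\lambda}$ is homogeneous and the $T_i$ preserve degree. I do not expect any real obstacle here, as the hard step (stability of $OI_{\lambda}$) rests on Lemma~\ref{lem_Baction} and the preceding proposition. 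The only point demanding a little care is that the $T_i$ are \emph{not} ring endomorphisms of $\opol_n$, so well-definedness must be argued through $\Z$-linearity rather than multiplicativity, and $\Z$-linearity is exactly what the $q$-commutator definition of $A_i$ guarantees.
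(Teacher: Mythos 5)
Your proposal is correct and matches the paper's approach: the paper states this as an immediate corollary of the preceding proposition (that $\Ho$ preserves the homogeneous left ideal $OI_{\lambda}$), which is exactly the standard descent-to-the-quotient argument you spell out. Your explicit verification of well-definedness via $\Z$-linearity of the operators $T_i$, and the observation that the Hecke relations and homogeneity descend through the surjection $\pi$, are precisely the routine details the paper leaves implicit.
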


%
\subsection{The size of $OH(X^{\lambda})$}
%

Next we give a basis $O\mathcal{B}(\lambda)$ for $OH(X^{\lambda})$ as a free abelian group. The monomials in this basis are the same as those for $H(X^{\lambda})$ first given by De Concini-Procesi~\cite{DP} and further studied by Garsia-Procesi~\cite{GP} and others~\cite{BO,Mbirka,BFR}. Our description most closely resembles that of Mbirika~\cite[Section 2]{Mbirka}.

Given a partition $\lambda = \lambda_1 \geq \ldots \geq \lambda_n\geq0$ the height $h(\lambda)$ of $\lambda$ is the number of nonzero elements of the partition. If $\lambda$ is visualized as a top and left justified Young diagram,  the height $h(\lambda)$ is the number of rows.  For $1 \leq a\leq h(\lambda)$ and $1 \leq b \leq \lambda_a$ a node $(a,b)$ of a Young diagram is the box in row $a$ and column $b$.

For $1\leq i\leq h(\lambda)$, define $\lambda^{(i)}$ to be the partition of $n-1$ obtained from $\lambda$ by doing the following.
\begin{itemize}
\item{Remove the rightmost box in the $i^{th}$ row of $\lambda$.}
\item{If the resulting shape is top and left justified do nothing.}
\item{If removing a box creates a broken column, shift all boxes below the gap directly upwards. The resulting shape is now top and left justified.}
 \end{itemize}
Using the subpartitions $\lambda^{(i)}$ define the set $O\mathcal{B}(\lambda)$ for $OH(X^{\lambda})$ recursively by $$O\mathcal{B}(1) = \{ 1\} \textup{  \hspace{.25in} and  \hspace{.25in}} O\mathcal{B}(\lambda) = \bigsqcup_{i=1}^{h(\lambda)} \{ bx_n^{i-1} \mid b\in O\mathcal{B}(\lambda^{(i)})\}.$$

\begin{example}
Consider the partition $\lambda = (2, 1, 1, 0)$. Garsia-Procesi make use of a tree to visualize the full recursion used to build a basis. The tree corresponding to the basis $O\mathcal{B}(2,1,1)$ is constructed in Figure~\ref{fig-tree} below.

\vspace{-0.2in}
\begin{figure}[h]
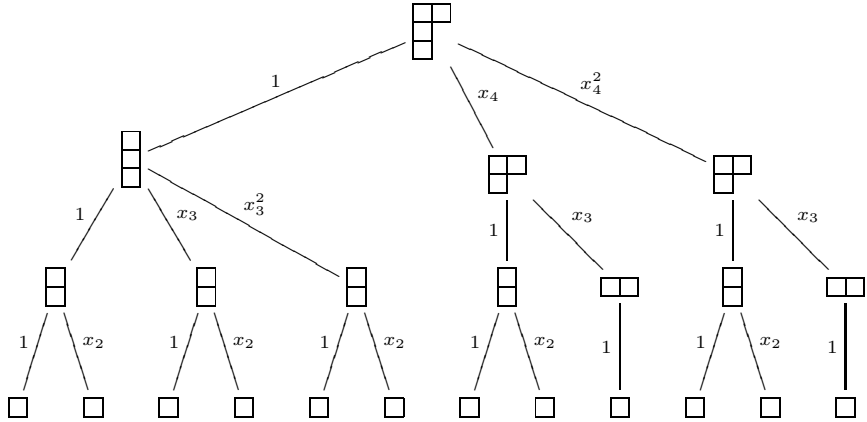

\[
 \xy
(0,44)*+{\tiny \yng(2,1,1)} = "A";
(-40,27)*+{\tiny\yng(1,1,1)} = "A1";
(10,25)* +{\tiny\yng(2,1)} = "A2";
(40,25)* +{\tiny\yng(2,1)} = "A3";
  {\ar @{-}_{1}  "A";"A1"};
  {\ar @{-}^{x_4}  "A";"A2"};
  {\ar @{-}^{x_4^2}  "A";"A3"};
(-50,10)* +{\tiny\yng(1,1)} = "A11";
(-30,10)* +{\tiny\yng(1,1)} = "A12";
(-10,10)* +{\tiny\yng(1,1)} = "A13";
   {\ar @{-}_{1}  "A1";"A11"};
   {\ar @{-}^{x_3}  "A1";"A12"};
   {\ar @{-}^{x_3^2}  "A1";"A13"};
(10,10)* +{\tiny\yng(1,1)} = "A21";
(25,10)* +{\tiny\yng(2)} = "A22";
    {\ar @{-}_{1}  "A2";"A21"};
    {\ar @{-}^{x_3}  "A2";"A22"};
(40,10)* +{\tiny\yng(1,1)} = "A31";
(55,10)* +{\tiny\yng(2)} = "A32";
    {\ar @{-}_{1}  "A3";"A31"};
    {\ar @{-}^{x_3}  "A3";"A32"};
(-55,-6)* +{\tiny\yng(1)} = "A111";
(-45,-6)* +{\tiny\yng(1)} = "A112";
(-35,-6)* +{\tiny\yng(1)} = "A121";
(-25,-6)* +{\tiny\yng(1)} = "A122";
(-15,-6)* +{\tiny\yng(1)} = "A131";
(-5,-6)* +{\tiny\yng(1)} = "A132";
(5,-6)* +{\tiny\yng(1)} = "A211";
(15,-6)* +{\tiny\yng(1)} = "A212";
(25,-6)* +{\tiny\yng(1)} = "A221";
(35,-6)* +{\tiny\yng(1)} = "A311";
(45,-6)* +{\tiny\yng(1)} = "A312";
(55,-6)* +{\tiny\yng(1)} = "A321";
    {\ar @{-}_{1}  "A11";"A111"};
    {\ar @{-}^{x_2}  "A11";"A112"};
    {\ar @{-}_{1}  "A12";"A121"};
    {\ar @{-}^{x_2}  "A12";"A122"};
    {\ar @{-}_{1}  "A13";"A131"};
    {\ar @{-}^{x_2}  "A13";"A132"};
    {\ar @{-}_{1}  "A21";"A211"};
    {\ar @{-}^{x_2}  "A21";"A212"};
    {\ar @{-}_{1}  "A22";"A221"};
    {\ar @{-}_{1}  "A31";"A311"};
    {\ar @{-}^{x_2}  "A31";"A312"};
    {\ar @{-}_{1}  "A32";"A321"};
 \endxy
\]
\caption{$O\mathcal{B}(2,1,1) = \{1, x_2, x_3, x_2x_3, x_3^2, x_2x_3^2, x_4, x_2x_4, x_3x_4, x_4^2, x_2x_4^2, x_3x_4^2\}$} \label{fig-tree}
\end{figure}
\end{example}

  \begin{lem} \label{lem_x-height}
 The monomial $x_n^{h(\lambda)}$ is an element of the odd Tanisaki ideal $OI_{\lambda}$, and thus $x_n^{h(\lambda)} \cong_{\lambda} 0$.
 \end{lem}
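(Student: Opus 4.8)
The plan is to reduce $x_n^{h(\lambda)}$ modulo $OI_{\lambda}$ to a single odd partial symmetric function that is manifestly one of the generators in $O\cal{C}_{\lambda}$. Throughout write $h=h(\lambda)$ and let $S=\{x_1,\ldots,x_{n-1}\}$ with its natural order, so that $\varepsilon_r^S$ is the $r$th odd elementary symmetric function in the first $n-1$ variables. Two facts drive the argument. First, the odd elementary symmetric functions in all $n$ variables lie in $OI_{\lambda}$: since $\delta_n(\lambda)=\lambda'_1+\cdots+\lambda'_n=|\lambda|=n$, the defining condition $n\geq r>n-\delta_n(\lambda)=0$ holds for every $1\leq r\leq n$, so each $\varepsilon_r^{\{x_1,\ldots,x_n\}}$ is a generator. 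Second, the split relation \eqref{SplitEqn2} with $|S|=n-1$ reads $\varepsilon_r^S=\varepsilon_r^{\{x_1,\ldots,x_n\}}+(-1)^{(n-1)+r}x_n\varepsilon_{r-1}^S$, so modulo $OI_{\lambda}$ the full-variable term may be discarded, giving $\varepsilon_r^S\cong_\lambda(-1)^{(n-1)+r}x_n\varepsilon_{r-1}^S$.

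First I would prove, by induction on $r$ for $0\leq r\leq n-1$, that $\varepsilon_r^S\cong_\lambda\pm x_n^r$. The base case $r=0$ is the identity $\varepsilon_0^S=1=x_n^0$. For the inductive step, the displayed congruence together with $\varepsilon_{r-1}^S\cong_\lambda\pm x_n^{r-1}$ yields $\varepsilon_r^S\cong_\lambda\pm x_n\varepsilon_{r-1}^S\cong_\lambda\pm x_n^r$; the left multiplication by $x_n$ is legitimate precisely because $OI_{\lambda}$ is a \emph{left} ideal. The exact signs play no role and I would track them simply as $\pm$, since in the end only membership in the ideal matters.

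To finish, I would compute $\delta_{n-1}(\lambda)=\lambda'_2+\cdots+\lambda'_n=n-\lambda'_1=n-h$. Hence for a subset of size $k=n-1$ the generating condition becomes $n-1\geq r>(n-1)-(n-h)=h-1$, that is $h\leq r\leq n-1$; in particular $\varepsilon_h^S\in O\cal{C}_{\lambda}$, so $\varepsilon_h^S\cong_\lambda 0$. Since the induction gives $\varepsilon_h^S\cong_\lambda\pm x_n^h$, I conclude $x_n^h\cong_\lambda 0$. The one genuinely load-bearing computation, and the main obstacle, is this identity $\delta_{n-1}(\lambda)=n-h(\lambda)$, which is exactly what makes the index $r=h$ fall inside the allowed range. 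The argument as stated presumes $h\leq n-1$, so the extremal case $h=n$ (forcing $\lambda=(1^n)$) must be treated separately: there $\varepsilon_n^S=0$ because $n>|S|$, and applying the split relation at $r=n$ gives $x_n\varepsilon_{n-1}^S=\pm\varepsilon_n^{\{x_1,\ldots,x_n\}}\in OI_{\lambda}$, whence $x_n^n\cong_\lambda\pm x_n\varepsilon_{n-1}^S\cong_\lambda 0$ using the $r=n-1$ instance of the induction.
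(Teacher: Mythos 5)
Your proof is correct and follows essentially the same route as the paper's: both hinge on the computation $\delta_{n-1}(\lambda)=n-h(\lambda)$ (so that $\varepsilon_{h(\lambda)}^S$ with $S=\{x_1,\ldots,x_{n-1}\}$ lies in $OI_{\lambda}$), on the fact that every full-variable function $\varepsilon_r^{\{x_1,\ldots,x_n\}}$ is a generator of $OI_{\lambda}$, and on iterating \eqref{SplitEqn2}, with your induction $\varepsilon_r^S\cong_\lambda\pm x_n^r$ being exactly the paper's ``iteratively applying Equation \eqref{SplitEqn2}'' packaged as a formal induction. Your separate treatment of the extremal case $h(\lambda)=n$ is a minor extra care that the paper handles implicitly through the convention $\varepsilon_r^S=0$ whenever $r>|S|$.
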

 \begin{proof}
Observe that the conjugate partition $\lambda' = \lambda_1' \geq \cdots \lambda_n' \geq0$ for $\lambda$ has the property that $\lambda_1' = h(\lambda)$. Thus, $\delta_{n-1}(\lambda) = \lambda_n' + \cdots + \lambda_{2}' = \delta_n(\lambda)-\lambda_1' = n-h(\lambda)$. It follows that $ h(\lambda) > (n-1) - \delta_{n-1}(\lambda)$.  Let $S = \{ x_1, \ldots, x_{n-1}\}$. Then according to the definition of the odd Tanisaki ideal, $\varepsilon_{h(\lambda)}^{S}\in OI_{\lambda}$.
Iteratively applying Equation \ref{SplitEqn2} gives
\begin{align*}
\varepsilon_{h(\lambda)}^{S}
 = \sum_{i=0}^{h(\lambda)-1} (-1)^{i\left(n-1+h(\lambda)\right) + \frac{i(i-1)}{2}}x_n^{i}\varepsilon_{h(\lambda)-i}^{S\cup \{x_n\}}
+ (-1)^{h(\lambda)(n-1)+ \frac{h(\lambda)(h(\lambda)+1)}{2}}x_n^{h(\lambda)}.
\end{align*}
Since $\varepsilon_r^{S\cup \{x_n\}}\in OI_{\lambda}$ for all $1\leq r \leq n$,  it follows that $x_n^{h(\lambda)} \cong_{\lambda} 0$.
 \end{proof}

Since $OH(X^{\lambda})$ is the quotient of $\opol_n$ by a homogenous left ideal, it has the structure of a graded left $\opol_n$-module. Let $OH(X^{\lambda})_{i}$ denote the subgroup of $OH(X^{\lambda})$ consisting of elements of the form $p x_n^{i}$ for some $p \in \opol_{n-1}$.  Lemma~\ref{lem_x-height} implies that $OH(X^{\lambda})_{h(\lambda)}$ is represented by the zero coset in $OH(X^{\lambda})$.  Hence, there is an isomorphism of abelian groups
\begin{equation}
  OH(X^{\lambda}) \cong \bigoplus_{i=1}^{h(\lambda)} OH(X^{\lambda}) _{i-1}/ OH(X^{\lambda}) _i.
\end{equation}

The following proof closely follows that of Garsia-Procesi~\cite[Proposition 2.1]{GP} (see also \cite{Tan}) but details are included for the reader's convenience.

\begin{prop}
The set $O\mathcal{B}(\lambda)$ spans $OH(X^{\lambda})$.
\end{prop}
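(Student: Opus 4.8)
The plan is to induct on $n$, the size of $\lambda$. The base case $n=1$, $\lambda=(1)$ is immediate: Lemma~\ref{lem_x-height} gives $x_1\cong_{(1)}0$, so $OH(X^{(1)})=\opol_1/OI_{(1)}$ is spanned by $1=O\mathcal{B}(1)$. For the inductive step I would exploit the filtration of $OH(X^\lambda)$ by powers of $x_n$. Since $\opol_n=\bigoplus_{a\ge0}\opol_{n-1}x_n^{\,a}$ (push every $x_n$ to the right using skew-commutativity), and since $x_n^{h(\lambda)}\cong_\lambda 0$ by Lemma~\ref{lem_x-height}, every class in $OH(X^\lambda)$ can be written as $\sum_{i=0}^{h(\lambda)-1}p_i\,x_n^{\,i}$ with $p_i\in\opol_{n-1}$. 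This matches the recursive shape $O\mathcal{B}(\lambda)=\bigsqcup_{j=1}^{h(\lambda)}\{b\,x_n^{\,j-1}\mid b\in O\mathcal{B}(\lambda^{(j)})\}$, so it suffices to reduce each coefficient $p_i$ modulo $OI_{\lambda^{(i+1)}}$ and invoke the inductive hypothesis for the partition $\lambda^{(i+1)}$ of $n-1$.

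\textbf{The reduction step.} By induction $O\mathcal{B}(\lambda^{(i+1)})$ spans $OH(X^{\lambda^{(i+1)}})=\opol_{n-1}/OI_{\lambda^{(i+1)}}$, so I may write $p_i=\sum_b c_b\,b+e_i$ with $b\in O\mathcal{B}(\lambda^{(i+1)})$ and $e_i\in OI_{\lambda^{(i+1)}}$. Each surviving term $c_b\,b\,x_n^{\,i}$ is a scalar multiple of a genuine basis element $b\,x_n^{\,i}\in O\mathcal{B}(\lambda)$, so the whole argument hinges on controlling the error $e_i\,x_n^{\,i}$. The crucial claim I would isolate is the \emph{ideal-compatibility} statement
\[
 OI_{\lambda^{(j)}}\cdot x_n^{\,j-1}\ \subseteq\ OI_\lambda+\opol_{n-1}x_n^{\,j}\qquad(1\le j\le h(\lambda)).
\]
Granting this with $j=i+1$, the error $e_i x_n^{\,i}$ lies in $OI_\lambda+\opol_{n-1}x_n^{\,i+1}$: its $OI_\lambda$-part vanishes in $OH(X^\lambda)$, and its $\opol_{n-1}x_n^{\,i+1}$-part is absorbed into the coefficient of a strictly higher power of $x_n$. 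Processing the powers $i=0,1,\dots,h(\lambda)-1$ in increasing order therefore finalizes each power while only feeding corrections forward; since powers $\ge h(\lambda)$ vanish, the process terminates and realizes the class in the span of $O\mathcal{B}(\lambda)$.

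\textbf{Proving ideal-compatibility.} This is where I expect the real work. Because $OI_{\lambda^{(j)}}$ is a left ideal and $x_n$ skew-commutes with all of $\opol_{n-1}$, it is enough to treat a single generator $\varepsilon_r^S$ with $S\subseteq\{x_1,\dots,x_{n-1}\}$, $|S|=k$, and $k-\delta_k(\lambda^{(j)})<r\le k$. The idea is to feed $\varepsilon_r^S\,x_n^{\,j-1}$ through the splitting relations \eqref{SplitEqn} and \eqref{SplitEqn2}, together with the left-ideal structure of $OI_\lambda$: each step either trades $\varepsilon_r^S$ for the partial symmetric function $\varepsilon_{r'}^{T}$ on an enlarged subset $T\ni x_n$ of size $k+1$ (up to a tracked sign), or emits a term carrying an extra factor of $x_n$ that is pushed into $\opol_{n-1}x_n^{\,j}$. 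When $T$ is the full set, one instead uses that $x_n\cdot\varepsilon_{r'}^{T}\in OI_\lambda$ and relates this to $\varepsilon_{r'}^{T}x_n$ via skew-commutativity, with the discrepancy again landing in $\opol_{n-1}x_n^{\,j}$. The target is to rewrite $\varepsilon_r^S\,x_n^{\,j-1}$, modulo $\opol_{n-1}x_n^{\,j}$, as a combination of honest generators of $OI_\lambda$.

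\textbf{The main obstacle.} It is twofold. First, one must verify the combinatorial inequality guaranteeing that the $\varepsilon_{r'}^{T}$ produced really are generators of $OI_\lambda$, i.e. that the Tanisaki cutoff $k-\delta_k(\lambda^{(j)})$ for $\lambda^{(j)}$ is compatible with the cutoff $(k+1)-\delta_{k+1}(\lambda)$ for $\lambda$ on exactly the range of $(k,r)$ where generators of $OI_{\lambda^{(j)}}$ occur; this is precisely the comparison exploited by Garsia--Procesi~\cite[Proposition 2.1]{GP}, and it rests on understanding how $\delta_k$ changes when the rightmost box of row $j$ is deleted. Second, and genuinely ``odd,'' every application of the Leibniz rule and of skew-commutativity introduces signs, and the intermediate terms $\varepsilon^{S}$ on the original smaller subset that appear along the recursion must be shown either to vanish (because their index exceeds $|S|$) or to be separately reducible. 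Keeping this sign bookkeeping consistent, rather than the underlying combinatorics of the $\delta_k$, is what I expect to make the odd argument more delicate than its even prototype.
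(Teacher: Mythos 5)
Your overall architecture is the same as the paper's: induction on $n$, the filtration of $OH(X^{\lambda})$ by powers of $x_n$, and the reduction of everything to the single claim that $\varepsilon_r^S x_n^{i-1} \cong_{\lambda} f x_n^{i}$ for every generator $\varepsilon_r^S$ of $OI_{\lambda^{(i)}}$ (this is the paper's equation \eqref{eq_zero-to-height}). The gap lies in how you propose to prove that claim. Your dichotomy --- each splitting step either trades $\varepsilon_r^S$ for $\varepsilon_{r'}^{T}$ with $T \ni x_n$, which you hope is a generator of $OI_{\lambda}$, or emits a term carrying an extra factor of $x_n$ to be absorbed into higher filtration --- rests on a cutoff compatibility that is false. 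One has $\delta_k(\lambda^{(i)}) \in \{\delta_{k+1}(\lambda)-1,\ \delta_{k+1}(\lambda)\}$, and in the case $\delta_k(\lambda^{(i)}) = \delta_{k+1}(\lambda)$ the boundary generator $\varepsilon_r^S \in O\cal{C}_{\lambda^{(i)}}$ with $r = k+1-\delta_{k+1}(\lambda)$ satisfies $\varepsilon_r^{S\cup\{x_n\}} \notin O\cal{C}_{\lambda}$, since membership for $\lambda$ requires the strict inequality $r > k+1-\delta_{k+1}(\lambda)$. For this term your scheme stalls: $\varepsilon_r^{S\cup\{x_n\}}x_n^{i-1}$ is not covered by a generator of $OI_{\lambda}$, and it carries no extra factor of $x_n$ to push forward, so it can be neither killed nor absorbed.

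The missing idea is that in this boundary case the factor $x_n^{i-1}$ must be used actively, not as passive bookkeeping. The paper rewrites \eqref{SplitEqn2} in the degree-raising (promotion) direction, $x_n\varepsilon_r^S = (-1)^{k+r}\varepsilon_{r+1}^{S\cup\{x_n\}} + (-1)^{k+r+1}\varepsilon_{r+1}^S$, and iterates it $i-1$ times, consuming the entire power of $x_n$. The intermediate terms $\varepsilon_{r+j}^{S\cup\{x_n\}}$ with $j \geq 1$ now clear the cutoff for $\lambda$, and the terminal term $\varepsilon_{r+i-1}^S$ --- on the original set $S$, with no $x_n$ left over --- lies in $OI_{\lambda}$ because of a geometric fact your sketch never invokes: since $\lambda^{(i)}$ is obtained by deleting the rightmost box of row $i$, every column to its right has at most $i-1$ boxes, so $\delta_{k+1}(\lambda) \leq \delta_k(\lambda) + i - 1$ and hence $r+i-1 = k+i-\delta_{k+1}(\lambda) > k - \delta_k(\lambda)$. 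Thus in this case $\varepsilon_r^S x_n^{i-1} \cong_{\lambda} 0$ outright, with the exponent $i-1$ tied to the row index $i$ in an essential way; your description, which only ever lowers the degree of the partial elementary function while splitting off $x_n$'s, cannot produce this. Your appeal to Garsia--Procesi \cite[Proposition 2.1]{GP} points at the correct prototype --- the even proof has exactly this three-case structure --- but the case your stated verification goal excludes is precisely the one that needs the new argument, so as it stands the proposal does not go through.
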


\begin{proof}
We prove this result via induction on the size of $\lambda$. For the base case, observe that $OH(X^{1})  \cong \mathbb{Z}$. Therefore $O\mathcal{B}(1) = \{1\}$ clearly spans $OH(X^{1})$.

Assume the result holds for all partitions of $n-1$, and consider $\lambda$ a partition of $n$.
Given an element of $OH(X^{\lambda}) _{i-1}/ OH(X^{\lambda}) _i$ with representative of the form $px_n^{i-1}$ where $p\in \opol_{n-1}$, we show that
\[
px_n^{i-1} \cong_{\lambda} \left( \sum_{b\in O\mathcal{B}(\lambda^{(i)})} \alpha_b bx_n^{i-1} \right) + f x_n^i
\]
for some $\alpha_b\in \mathbb{Z}$ and $f \in\opol_n$.  By induction,  assume that $p \cong_{\lambda^{(i)}} \sum_{b\in O\mathcal{B}(\lambda^{(i)})} \alpha_b b$ and thus
\[
px_n^{i-1} \cong_{\lambda^{(i)}} \sum_{b\in O\mathcal{B}(\lambda^{(i)})} \alpha_b bx_n^{i-1}.
\]
Therefore, it only remains to show that for any $g\in OI_{\lambda^{(i)}}$  we have $$gx_n^{i-1} \cong_{\lambda} fx_n^{i}$$ for some $f\in \opol_n$. It is enough to show this for all of the odd partial elementary symmetric functions $\varepsilon_r^S$ generating the odd Tanisaki ideal $OI_{\lambda^{(i)}}$.

Recall that for $S\subset \{x_1, \ldots, x_{n-1}\}$ with $k=|S|$ we have $r>k-\delta_k\left(\lambda^{(i)}\right)$ whenever $\varepsilon_r^S \in OI_{\lambda^{(i)}}$. Because $\lambda^{(i)}$ is subordinate to $\lambda$, it follows that $\delta_k\left(\lambda^{(i)}\right) = \delta_{k+1}(\lambda)$ or  $\delta_k\left(\lambda^{(i)}\right) = \delta_{k+1}(\lambda) - 1$ for all $1\leq k \leq n-1$.

If $\delta_k\left(\lambda^{(i)}\right) = \delta_{k+1}(\lambda) - 1$, then
\[
k-\delta_k(\lambda^{(i)})
= k+1-\left(\delta_k(\lambda^{(i)})+1\right) = k+1-\delta_{k+1}(\lambda).
\]
We conclude that whenever $\varepsilon_r^S\in OI_{\lambda^{(i)}}$ it follows that $\varepsilon_r^{S\cup \{x_n\} }\in OI_{\lambda}$. By Equation \ref{SplitEqn},
$$\varepsilon_r^Sx_n^{i-1} = (\varepsilon_r^{S\cup \{x_n\}} + (-1)^{k-1} \varepsilon_{r-1}^{S}x_n)x_n^{i-1} \cong_{\lambda} \varepsilon_{r-1}^{S}x_n^i.$$

Next assume that $\delta_k\left(\lambda^{(i)}\right) = \delta_{k+1}(\lambda)$ and $r>k+1-\delta_{k+1}(\lambda)$. Then $$k+1-\delta_{k+1}(\lambda) = k+1-\delta_{k}(\lambda^{(i)}) > k-\delta_{k}(\lambda^{(i)}).$$ Since this implies $\varepsilon_r^{S} \in OI_{\lambda^{(i)}}$ and $\varepsilon_r^{S\cup \{x_n\}} \in OI_{\lambda}$, we can argue as in the previous case.

Finally assume that $\delta_k\left(\lambda^{(i)}\right) = \delta_{k+1}(\lambda)$ and $r=k+1-\delta_{k+1}(\lambda)$. Then $\varepsilon_r^S\in OI_{\lambda^{(i)}}$ but $e_r^{S\cup \{x_n\}} \notin OI_{\lambda}$. Observe that $\delta_k\left(\lambda^{(i)}\right) = \delta_{k+1}(\lambda) = \delta_{k}(\lambda) + \lambda'_{n-k-1}>\delta_{k}(\lambda) + i$. This is true since column $n-k-1$ of $\lambda$ is to the right of the column that is altered in order to obtain $\lambda^{(i)}$. Since $\lambda^{(i)}$ comes from removing the rightmost box from the $i^{th}$ row, every column to the right of that location has at most $i-1$ boxes.

Since $r=k+1-\delta_{k+1}(\lambda)$, we have that $r+i-1 = k+i-\delta_{k+1}(\lambda)>k-\delta_k(\lambda)$ and $\varepsilon_{r+i-1}^S\in OI_{\lambda}$. Notice also that $\varepsilon_{r+j}^{S\cup\{x_n\}}\in OI_{\lambda}$ for all $j\geq 1$. Rewriting Equation \ref{SplitEqn2}, we have
$x_n\varepsilon_r^S = (-1)^{k+r}\varepsilon_{r+1}^{S\cup \{x_n\}}+(-1)^{k+r+1} \varepsilon_{r+1}^S$. Iterating the equation in this form, we get
\begin{align*}
\varepsilon_r^Sx_n^{i-1} &= (-1)^{r(i-1)}\varepsilon_r^Sx_n^{i-1} \\
&= \left( \sum_{j=1}^{i-1} (-1)^{j(r+k)+\frac{(j-1)(j-2)}{2}}
 x_n^{i-j-1}\varepsilon_{r+j}^{S\cup \{x_n\}}\right)
 - (-1)^{(i-1)(r+k)+\frac{(i-2)(i-3)}{2}} \varepsilon_{r+i-1}^S\\
&\cong_{\lambda} 0.
\end{align*}

We have succeeded in showing that whenever $\varepsilon_r^S\in OI_{\lambda^{(i)}}$ it follows that
\begin{equation} \label{eq_zero-to-height}
\varepsilon_r^Sx_n^{i-1}\cong_{\lambda} fx_n^i
\end{equation}
for some $f\in \opol_n$, and so the proof is complete.
\end{proof}

\begin{thm}
 The spanning set $O\mathcal{B}(\lambda)$ is a homogeneous basis for $OH(X^{\lambda})$ as a free abelian group.
\end{thm}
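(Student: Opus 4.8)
The set $O\mathcal{B}(\lambda)$ has already been shown to span $OH(X^{\lambda})$ over $\Z$, so the whole content of the theorem is linear independence together with freeness of the underlying group. The plan is to prove both by induction on $|\lambda|$, combining the recursive $x_n$-filtration used in the spanning proposition with a modular reduction that pins down the rank. Writing $N_d := |O\mathcal{B}(\lambda)_d|$ for the number of basis monomials in degree $d$, I want to show that in each degree the free $\Z$-module on $O\mathcal{B}(\lambda)_d$ maps isomorphically onto $OH(X^{\lambda})_d$, so that $OH(X^{\lambda})$ is free with the asserted homogeneous basis.

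First I would record the modular reduction. Under the identification $\opol_n\otimes_{\Z}\Z/2\cong\pol_n\otimes_{\Z}\Z/2$ the signs in $x_i^S=(-1)^{S(i)-1}x_i$ disappear, so each odd partial symmetric function $\varepsilon_r^S$ reduces to the corresponding even partial symmetric function generating the Tanisaki ideal $I_{\lambda}$. Hence $OI_{\lambda}$ reduces to $I_{\lambda}$ mod $2$ and $OH(X^{\lambda})\otimes_{\Z}\Z/2\cong H(X^{\lambda})\otimes_{\Z}\Z/2$ as graded $\Z/2$-vector spaces. Since $H(X^{\lambda})$ is free over $\Z$ on the Garsia--Procesi basis, whose monomials are exactly those of $O\mathcal{B}(\lambda)$, this gives $\dim_{\Z/2}\!\big(OH(X^{\lambda})_d\otimes\Z/2\big)=N_d$. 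Because $O\mathcal{B}(\lambda)$ spans $OH(X^{\lambda})$ integrally, its reduction spans the mod $2$ space; a spanning set of size $N_d$ in a space of dimension $N_d$ is a basis, so the reductions of $O\mathcal{B}(\lambda)_d$ are $\Z/2$-linearly independent.

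The main obstacle is lifting this to an integral statement: a $\Z$-spanning set whose reduction is a basis modulo $2$ need not be a $\Z$-basis (for instance $\Z/6$ is generated by a single element whose image is a basis of $\Z/6\otimes\Z/2$), so mod $2$ data alone cannot exclude torsion in $OH(X^{\lambda})$. To handle this I would induct on $|\lambda|$ using the decomposition $OH(X^{\lambda})\cong\bigoplus_{i=1}^{h(\lambda)}OH(X^{\lambda})_{i-1}/OH(X^{\lambda})_i$ together with the surjections $\cdot\,x_n^{i-1}\colon OH(X^{\lambda^{(i)}})\twoheadrightarrow OH(X^{\lambda})_{i-1}/OH(X^{\lambda})_i$ sending $O\mathcal{B}(\lambda^{(i)})$ onto $\{b\,x_n^{i-1}\}$, with base case $OH(X^{1})\cong\Z$. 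By induction each $OH(X^{\lambda^{(i)}})$ is free, hence torsion-free, so it suffices to show these surjections are injective: then each summand is free on $\{b\,x_n^{i-1}\}$, the direct sum is free on $O\mathcal{B}(\lambda)$, and the induction closes.

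Injectivity is where the real work lies. Since the source is free, injectivity of $\cdot\,x_n^{i-1}$ is equivalent to the matching of rational ranks $\dim_{\Q}\big(OH(X^{\lambda})_{i-1}/OH(X^{\lambda})_i\big)=|O\mathcal{B}(\lambda^{(i)})|$; equivalently, $OI_{\lambda}$ is a saturated (pure) submodule of $\opol_n$ and $OH(X^{\lambda})$ is torsion-free. I would establish this by a direct \emph{integral} analysis reversing the reduction in the spanning proof: the splitting identities \eqref{SplitEqn}--\eqref{SplitEqn2} and the expansion in Lemma~\ref{lem_x-height} all carry leading coefficient $\pm1$ on the governing power of $x_n$, so the normal-form reduction expressing an arbitrary monomial in terms of $O\mathcal{B}(\lambda)$ proceeds entirely over $\Z$ with $\pm1$ pivots. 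Tracking these pivots shows that if $p\,x_n^{i-1}$ lies in $OI_{\lambda}$ modulo higher powers of $x_n$ then already $p\in OI_{\lambda^{(i)}}$, which kills the kernel; equivalently, the change of basis between $O\mathcal{B}(\lambda)_d$ (together with a generating set of $(OI_{\lambda})_d$) and the monomial basis of $(\opol_n)_d$ is unimodular. The heart of the proof is thus the sign bookkeeping verifying that no prime beyond those visible mod $2$ can divide these transition determinants; the remaining combinatorics is exactly the even-case argument of Garsia--Procesi carried through the signed odd setting.
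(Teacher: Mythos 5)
Your inductive framework is sound, and your reformulations are all correct: injectivity of the surjections $OH(X^{\lambda^{(i)}})\twoheadrightarrow OH(X^{\lambda})_{i-1}/OH(X^{\lambda})_{i}$ is equivalent to the rank count, equivalently to $OI_{\lambda}$ being saturated in $\opol_n$, and this is exactly what separates spanning plus the mod $2$ count from freeness. The genuine gap is that you never prove this saturation statement; you only rename it. The justification offered --- that the identities \eqref{SplitEqn}, \eqref{SplitEqn2} and Lemma~\ref{lem_x-height} have coefficients $\pm 1$, so the ``normal-form reduction'' has ``$\pm 1$ pivots'' and a certain transition matrix is unimodular --- is not an argument. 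Those unit-coefficient identities are precisely what yields spanning: they rewrite an arbitrary monomial in terms of $O\mathcal{B}(\lambda)$ modulo $OI_{\lambda}$, but they say nothing about which elements of the $\Z$-span of $O\mathcal{B}(\lambda)$ lie in $OI_{\lambda}$, which is what injectivity asks. To make ``normal form'' reasoning prove independence you would need to show the rewriting procedure is confluent (its output is independent of all choices), which is the real content and is nowhere addressed; and to speak of a square unimodular transition between $O\mathcal{B}(\lambda)_d$ together with a $\Z$-basis of $(OI_{\lambda})_d$ and the monomial basis of $(\opol_n)_d$, you must already know ${\rm rk}\,(OI_{\lambda})_d$ is complementary to $|O\mathcal{B}(\lambda)_d|$ --- the very rank statement at issue, so the ``pivot tracking'' is circular. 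Finally, deferring the rest to ``the even-case argument of Garsia--Procesi carried through the signed odd setting'' cannot be invoked as a black box: linear independence is the hard half of \cite{GP}, and their proof of it is not sign bookkeeping but uses genuinely commutative structure with no odd analogue established here; avoiding a re-derivation of that argument in the odd setting is precisely why one wants to transfer independence from the even case.

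For comparison, the paper's own proof takes the route you set aside, but at the level of relations rather than dimensions: given a nontrivial relation $\sum_b a_b b\in OI_{\lambda}$ among the elements of $O\mathcal{B}(\lambda)$, one divides the coefficients by the largest common power of $2$, so that some coefficient is odd, and then reduction modulo $2$ produces a nontrivial relation among the Garsia--Procesi basis elements of $H(X^{\lambda})\otimes_{\Z}\Z/2$, contradicting the integral freeness of $H(X^{\lambda})$. Your $\Z/6$ example does probe the right subtlety: this division step handles outright all relations with some odd coefficient, while the all-even case is exactly the $2$-torsion/saturation question your reformulation isolates (if $2f\in OI_{\lambda}$ one cannot formally conclude that $f$ reduces into the ideal modulo $2$). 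So your diagnosis of where the difficulty sits is sharp; but identifying a difficulty is not resolving it. As written, your proposal establishes only what the paper's first steps already give --- spanning and mod $2$ independence, hence the nonexistence of relations with an odd coefficient --- and replaces the paper's short relation-division argument with an inductive scheme whose key step is asserted rather than proved. The theorem is therefore not established by the proposal.
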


\begin{proof}
Let $R$  denote the $\Z$-linear span of the set  $O\mathcal{B}(\lambda)$. Since both of the sets $\mathcal{B}(\lambda)$ and $O\mathcal{B}(\lambda)$ consist of identical monomials with unit coefficients, it follows that they give a basis for the $\Z_2$-vector spaces  $H(X^{\lambda}) \otimes_{\Z} \Z_2$ and $R \otimes_{\Z} \Z_2$, respectively.  Furthermore, this implies
\begin{equation}
\qrk H(X^{\lambda}) =  \mathrm{dim}_{q,\Z/2}\left( H(X^{\lambda}) \otimes_{\Z} \Z/2\right)
= \mathrm{dim}_{q,\Z/2}\left( R \otimes_{\Z} \Z/2 \right),
\end{equation}
so there can be no relations among the elements of  $O\mathcal{B}(\lambda)$. Any relation will, upon dividing by an appropriate power of 2, give rise to a relation in $H(X^{\lambda}) \otimes_{\Z} \Z/2$.
\end{proof}

\begin{cor}
There is an equality of ranks
\begin{equation}
  {\rm rk}_{\Z} OH(X^{\lambda}) =
  {\rm rk}_{\Z} H(X^{\lambda}) =\frac{n!}{\lambda_1! \cdots \lambda_n!}.
\end{equation}
This equality extends to an equality of graded ranks, where the right hand side above is replaced by an appropriate graded analog of the binomial coefficient, see~\cite[Equation 1.11]{GP}.
\end{cor}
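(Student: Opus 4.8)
The plan is to derive this entirely from the preceding basis theorem together with the classical Garsia--Procesi count, since the genuine work has already been carried out in establishing that $O\mathcal{B}(\lambda)$ is a free homogeneous $\Z$-basis for $OH(X^{\lambda})$. First I would observe that, once $O\mathcal{B}(\lambda)$ is known to be a $\Z$-basis, the ungraded rank $\rk_{\Z} OH(X^{\lambda})$ is simply the cardinality $|O\mathcal{B}(\lambda)|$, and likewise $\rk_{\Z} H(X^{\lambda}) = |\mathcal{B}(\lambda)|$ by the Garsia--Procesi result for the even cohomology.

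The key step is to note that the recursive rule defining $O\mathcal{B}(\lambda)$, namely $O\mathcal{B}(1)=\{1\}$ and $O\mathcal{B}(\lambda)=\bigsqcup_{i=1}^{h(\lambda)}\{bx_n^{i-1}\mid b\in O\mathcal{B}(\lambda^{(i)})\}$, is identical to the recursion Garsia--Procesi use to produce their basis $\mathcal{B}(\lambda)$ of $H(X^{\lambda})$. Consequently $O\mathcal{B}(\lambda)$ and $\mathcal{B}(\lambda)$ consist of the same monomials with unit coefficients, so in particular $|O\mathcal{B}(\lambda)|=|\mathcal{B}(\lambda)|$. Invoking the Garsia--Procesi evaluation of this cardinality as the multinomial coefficient $n!/(\lambda_1!\cdots\lambda_n!)$ then yields the stated equality of ungraded ranks.

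For the graded refinement I would use that $O\mathcal{B}(\lambda)$ is homogeneous (part of the basis theorem) and that, under the identification of the two bases as one and the same monomial set, corresponding basis elements carry identical degrees with respect to the grading $\deg(x_i)=2$. Hence each graded piece contributes the same number of basis elements on both sides, giving $\qrk OH(X^{\lambda})=\qrk H(X^{\lambda})$; in fact this equality is already implicit in the proof of the basis theorem, where $\qrk H(X^{\lambda})$ was matched with $\mathrm{dim}_{q,\Z/2}(R\otimes_\Z \Z/2)$ for $R$ the $\Z$-span of $O\mathcal{B}(\lambda)$. The explicit closed form is then the $q$-analog of the multinomial coefficient recorded in \cite[Equation 1.11]{GP}.

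I do not expect a genuine obstacle here: all the substance lies upstream in the spanning proposition and the modular-reduction argument of the basis theorem. The only point requiring care is the purely bookkeeping verification that the recursion for $O\mathcal{B}(\lambda)$ reproduces the Garsia--Procesi recursion exactly, which is immediate from the definitions, and the observation that identical monomials of identical degree force equality of both the ranks and the graded ranks.
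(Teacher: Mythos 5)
Your proposal is correct and matches the paper's (implicit) argument: the corollary is stated as an immediate consequence of the basis theorem, precisely because $O\mathcal{B}(\lambda)$ and the Garsia--Procesi basis $\mathcal{B}(\lambda)$ are built by the same recursion and hence consist of the same monomials in the same degrees, so both the ungraded count $n!/(\lambda_1!\cdots\lambda_n!)$ and the graded refinement transfer directly from the even theory. The paper offers no additional content beyond what you describe, so your reconstruction is essentially the intended proof.
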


\begin{rem}
While the bases $\mathcal{B}(\lambda)$ and $O\mathcal{B}(\lambda)$ consist of identical monomials,  the rewriting rules for non basis elements in $OH(X^{\lambda})$ differ from the corresponding rules in $H(X^{\lambda})$.
\end{rem}

%
\subsection{Maximal promotion} \label{sec-max}
%

We now introduce a technique for determining if a monomial multiplied by an elementary symmetric function  is in the ideal $OI_{\lambda}$.  It follows from \eqref{SplitEqn2} that for $S \subset \{ x_1, \dots, x_{n-1}\}$ and $r \geq 0$
\begin{equation} \label{eq-promo}
x_n\varepsilon_{r}^{S} = (-1)^{|S|+r}\left( \varepsilon_{r+1}^{S\cup \{x_n\}}-\varepsilon_{r+1}^{S}\right).
\end{equation}
This equation {\em promotes} the degree of elementary symmetric functions from $r$ to $r+1$.
Observe that, starting with $x_n^2\varepsilon_r^S$,  we can promote a second time by applying \eqref{eq-promo} to the term $x_n\varepsilon_{r+1}^S$.

More generally,  define the {\em maximal promotion} of
$ x_n^{a_1} x_{n+1}^{a_2} \dots x_{n+p+1}^{a_{p}}\varepsilon_{r}^{S}$
to be the result of iteratively applying \eqref{eq-promo}. Start by skew-commuting $x_n$ to the right of the other variables and promoting wherever possible until all powers of $x_n$ have been exhausted.  The unused powers of $x_n$ are then skew commuted back to the front.  Repeat this process with the variable $x_{n+1}$,  skew-commuting the unused variables back to their original location when finished maximally promoting. Continue until all variables appearing before $\varepsilon_{r}^{S}$ have been maximally promoted. As an example, the maximal promotion of $x_n^2x_{n+1}\varepsilon_r^S$ is
\begin{align*}
x_n^2x_{n+1}\varepsilon_r^S
&= x_n\left( \varepsilon_{r+2}^{S\cup \{x_n\}} - \varepsilon_{r+2}^{S\cup \{x_n, x_{n+1}\}}   \right) + (-1)^{|S|+r+3} \left(\varepsilon_{r+3}^{S\cup\{x_n, x_{n+1}\}} - \varepsilon_{r+3}^{S\cup \{x_n\}}\right) \\
&\hspace{.5in}+ (-1)^{|S|+r+3}\left(\varepsilon_{r+3}^{S\cup\{x_{n+1}\}} - \varepsilon_{r+3}^{S}\right).
\end{align*}

It is not too difficult to derive an explicit formula for maximal promotion. However, we will not need such formulas in what follows.  It suffices to know that the maximal promotion results in a sum of terms of the form $x_n^{a_1-\kappa_1} x_{n+1}^{a_2-\kappa_2} \dots x_{n+p+1}^{a_{p}-\kappa_{p}}\varepsilon_{r+|\kappa|}^{S \cup R_{\kappa}}$
where $1 \leq \kappa_i \leq a_i$, $|\kappa| = \kappa_1 + \dots + \kappa_p$, and $R_{\kappa}$ is a subset of $\{x_n, x_{n+1}, \dots, x_{n+p+1} \}$ satisfying the condition that a variable $x_j \notin R_{\kappa}$ only if $\kappa_j=a_j$.

For an analysis of whether or not $f \varepsilon_r^S$ is in the ideal $OI_{\lambda}$ for some monomial $f$, it suffices to check $\varepsilon_{r+|\kappa|}^{S\cup R_{\kappa}} \in OI_{\lambda}$ only for the minimal possible value of $r+|\kappa|$ corresponding to each variable set $S \cup R_{\kappa}$ arising in the maximal promotion.

Below we illustrate how maximal promotion can be used to identify elements in $OI_{\lambda}$.  The two lemmas appearing below are somewhat technical, but they are essential for the proof of Theorem~\ref{thm_main}.  For any $j \geq 1$ write $X_j:= \{ x_1, \dots, x_j\}$.

\begin{lem} \label{lem-tech1}
Given a partition $\lambda$ with $|\lambda|=n$ and $h(\lambda)=m$ let $d=n-\lambda_{m-1}-1$, and $T=\{x_{d+1}, x_{d+2}, \dots, x_{n}\}$, see Figure~\ref{fig-Garnir}.
Then
\begin{equation} \label{lem_promo}
x_{d+1}^{m-2} x_{d+2}^{m-2} \dots x_{n}^{m-2}\varepsilon_{\lambda_m+\alpha}^{T}\cong_{\lambda}0
\end{equation}
 for any $\alpha>0$.
\end{lem}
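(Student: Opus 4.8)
The plan is to read the statement as an instance of ``monomial times odd partial symmetric function lies in $OI_{\lambda}$,'' which is exactly the question that the splitting relations \eqref{SplitEqn}, \eqref{SplitEqn2} and the promotion identity \eqref{eq-promo}, assembled into the maximal-promotion calculus of Section~\ref{sec-max}, are designed to settle. I would model the argument on the proof of Lemma~\ref{lem_x-height}: there a single generator $\varepsilon_{h(\lambda)}^{\{x_1,\ldots,x_{n-1}\}}$ of $OI_{\lambda}$ was expanded by iterated application of \eqref{SplitEqn2}, and the pure power $x_n^{h(\lambda)}$ was extracted as an element of the ideal because every other term of the expansion was itself a generator. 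The same template should apply here, with the block $T$ of the last $\lambda_{m-1}+1$ variables playing the role of $x_n$.

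Concretely, I would first reduce to the bounded range $1\le\alpha\le\lambda_{m-1}+1-\lambda_m$, since $\varepsilon_r^T=0$ whenever $r>|T|=\lambda_{m-1}+1$ makes the statement automatic for larger $\alpha$. Then I would use \eqref{SplitEqn2} and \eqref{eq-promo} to rewrite $x_{d+1}^{m-2}\cdots x_n^{m-2}\varepsilon_{\lambda_m+\alpha}^{T}$ as a signed combination of odd partial symmetric functions $\varepsilon_{r'}^{S'}$, each unit of degree in the monomial being traded for a unit increase in the subscript. By the minimal-subscript principle stated at the end of Section~\ref{sec-max}, membership of the whole expression in $OI_{\lambda}$ then reduces to checking, for each set $S'$ that occurs, the single inequality $r'>|S'|-\delta_{|S'|}(\lambda)$ at the minimal surviving subscript $r'$.

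The decisive step is this combinatorial inequality, which I would verify using the Young-diagram reading $\delta_{k}(\lambda)=n-\sum_{j\le n-k}\lambda'_{j}$, the placement $d=n-\lambda_{m-1}-1$ (so that $T$ is precisely the last $\lambda_{m-1}+1$ variables), and the degree bound $m-2=h(\lambda)-2$, which should supply exactly enough promotions to clear the threshold; the hypothesis $\alpha>0$ is what pushes the minimal subscript one step past $|S'|-\delta_{|S'|}(\lambda)$, just as $h(\lambda)>(n-1)-\delta_{n-1}(\lambda)$ drove Lemma~\ref{lem_x-height}. I expect the main obstacle to be establishing this inequality uniformly over all $S'$ produced by the rewriting, and in particular arranging that the reduction reaches odd partial symmetric functions over sets $S'$ large enough to be genuine generators of $OI_{\lambda}$ rather than functions supported only on $T$. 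A secondary nuisance is the sign bookkeeping forced by skew-commuting variables in \eqref{SplitEqn2} and \eqref{eq-promo}; however, since the conclusion is only that the element is $\cong_{\lambda}0$, the precise signs are irrelevant and it suffices to control the list of pairs $(r',S')$ that survive.
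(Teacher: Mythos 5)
Your framework is the right one, and you even point at the right difficulty, but the proposal has a genuine gap at exactly the step that carries the proof. Your plan is to promote the monomial $x_{d+1}^{m-2}\cdots x_n^{m-2}$ directly into $\varepsilon_{\lambda_m+\alpha}^{T}$, ``each unit of degree traded for a unit increase in the subscript.'' But the promotion identity \eqref{eq-promo} only applies when the variable being promoted lies \emph{outside} the set of the partial symmetric function (and has larger index than its elements), whereas here the variables of your monomial are exactly the elements of $T$. So neither \eqref{SplitEqn2} nor \eqref{eq-promo} lets you raise the subscript of $\varepsilon^{T}$ by consuming powers of $x_{d+1},\dots,x_n$; all these identities can do is shuffle you among functions $\varepsilon_{r'}^{S'}$ with $S'\subseteq T$. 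And that can never succeed: for $m\geq 3$ (the only substantive case, since for $m=2$ the prefactor is $1$ and $\varepsilon_{\lambda_2+\alpha}^{T}$ is already a generator) one has $|T|=\lambda_{m-1}+1\leq n-\lambda_1$, so every subset $S'\subseteq T$ satisfies $\delta_{|S'|}(\lambda)=0$, and the membership condition $r'>|S'|-\delta_{|S'|}(\lambda)$ forces $r'>|S'|$, i.e.\ $\varepsilon_{r'}^{S'}=0$. In other words, no nonzero partial symmetric function supported inside $T$ is a generator of $OI_{\lambda}$, so no amount of rewriting inside $T$ can certify ideal membership. You do flag this issue yourself (``sets $S'$ large enough to be genuine generators \dots rather than functions supported only on $T$''), but flagging the obstacle is not resolving it, and the route you propose cannot get past it.

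The missing idea, which is how the paper proceeds, is to eliminate $\varepsilon_{\lambda_m+\alpha}^{T}$ \emph{before} promoting: expand it into its monomials and absorb them into the prefactor, producing monomials $x_{d+1}^{a_1}\cdots x_n^{a_{n-d}}$ in which $\lambda_m+\alpha$ of the exponents equal $m-1$ and the rest equal $m-2$; then insert the seed $\varepsilon_0^{X_d}=1$ over the \emph{complementary} initial segment $X_d=\{x_1,\dots,x_d\}$ and maximally promote into it. Promotion is now legitimate (the monomial variables lie outside $X_d$ and have larger indices), and it produces functions $\varepsilon_{|\kappa|}^{X_d\cup R_{\kappa}}$ over sets of size $\rho\geq d=n-\lambda_{m-1}-1$, which are large enough for $\delta_{\rho}(\lambda)$ to be substantial. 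The proof then concludes by checking, at the minimal subscript occurring for each set $X_d\cup R_{\kappa}$, the inequality $r>\rho-\delta_{\rho}(\lambda)$, which holds precisely because $\alpha>0$. Your preliminary reduction to $\alpha\leq\lambda_{m-1}+1-\lambda_m$ is harmless but does no work here.
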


\begin{proof}
Write $\varepsilon_{\lambda_m+\alpha}^{T}$ as a sum of monomials and reorder the variables in \eqref{lem_promo} in increasing order resulting in a linear combination of terms of the form
\[
x_{d+1}^{a_1} x_{d+2}^{a_2} \dots x_{n}^{a_{n-d}}
\]
where $\lambda_m + \alpha$ of the exponents are $m-1$ and the rest are $m-2$. Consider the maximal promotion of
\[
x_{d+1}^{a_1} x_{d+2}^{a_2} \dots x_{n}^{a_{n-d}}\varepsilon_{0}^{X_{d}}.
\]
The result is a linear combination of terms of the form
\[
x_{d+1}^{a_1-\kappa_1} x_{d+2}^{a_2-\kappa_2} \dots x_{n}^{a_{n-d}-\kappa_{n-d}}\varepsilon_{|\kappa|}^{X_{d} \cup R_{\kappa}}
\]
with $1 \leq \kappa_j \leq a_j$ and $R_{\kappa} \subset \{x_{d+1},\dots, x_n\}$ with $x_j \notin R_{\kappa}$ only when $\kappa_j=a_j$.

For a fixed set of variables $X_{d} \cup R_{\kappa}$ a careful analysis of the smallest $r$ such that $\varepsilon_r^{X_{d} \cup R_{\kappa}}$ occurs in the maximal promotion reveals a minimal value of
\[
 r = \left( \left(h\left(\lambda\right)-2\right)\left(\lambda_{m-1}+1\right) + \lambda_m + \alpha\right)-\left(\left(h\left(\lambda\right) -3\right)|R_{\kappa}|+\omega\right)
\]
where
$\omega =
 \min\left(\lambda_m+\alpha, |R_{\kappa}|\right)$.  The first set of parenthesis is the contribution if none of the variables appear in $R_{\kappa}$ and the second set of parenthesis is the contribution if $\kappa_j=1$ for each variable $x_{j}$ appearing in $R_{\kappa}$.

Write
\[
\rho:= |X_{d} \cup R_{\kappa}| = n-(\lambda_{m-1}+1) + |R_{\kappa}|.
\]
 To show that $\varepsilon_r^{X_{d} \cup R_{\kappa}} \in OI_{\lambda}$ we must show $r> \rho-\delta_{\rho}$.  After simplification this amounts to the inequality
\[
 (h(\lambda)-1)(\lambda_{m-1}+1) +(\lambda_{m}+\alpha-\omega) > (n-\delta_{\rho}(\lambda))+(h(\lambda)-2)|R_{\kappa}|
\]
 schematically illustrated below.
 \[
\xy
(0,0)*{
   \includegraphics{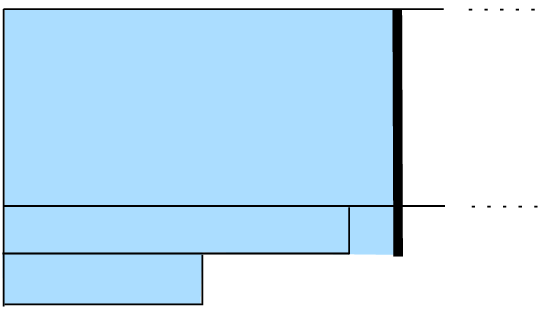}
     };
   (-20,-13)*{\scs \lambda_m};
   (-20,-8)*{\scs \lambda_{m-1}};
\endxy  +(\alpha-\omega)
\quad > \qquad
\xy
(0,0)*{
   \includegraphics{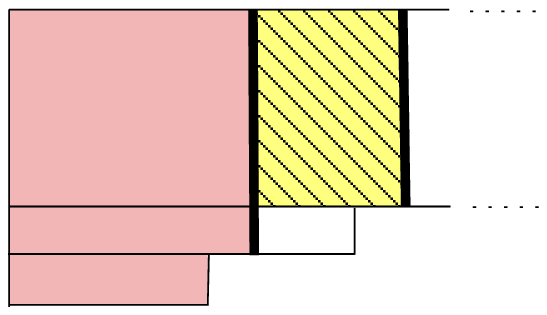}
     };
   (-20,-13)*{\scs \lambda_m};
   (-20,-8)*{\scs \lambda_{m-1}};
   (5,17)*{\overbrace{\hspace{0.6in}}};
   (5,21)*{|R_{\kappa}|};
\endxy
\]

Whenever $|R_{\kappa}| \leq \lambda_m$ it follows that $\omega=|R_{\kappa}|$. In this case, one can check that the inequality fails unless $\alpha>0$.
\end{proof}

\begin{lem} \label{lem-tech2}
Given a partition $\lambda$ with $|\lambda|=n$ and $h(\lambda)=m$ let $c= n-\lambda_{m-1}-\lambda_m$, $d=n-\lambda_{m-1}-1$, and $T=\{x_{d+1}, x_{d+2}, \dots, x_{n}\}$, see Figure~\ref{fig-Garnir}.  Further suppose that $S$ is a subset of $\{x_{c+1}, x_{c+2}\dots, x_{d} \}$ and that
$K\geq (\lambda_m+1)(h(\lambda)-2)-|S|(h(\lambda)-3)$.
Then \[
 x_{d+1}^{m-2} x_{d+2}^{m-2} \dots x_{n}^{m-2}
\varepsilon_{K+\alpha}^{X_c\cup S}\varepsilon_{\lambda_m-\alpha}^{T}\cong_{\lambda}0
 \]
 for any $\alpha>0$.
\end{lem}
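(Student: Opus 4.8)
The plan is to imitate the proof of Lemma~\ref{lem-tech1}: absorb the monomial prefix together with $\varepsilon_{\lambda_m-\alpha}^{T}$, maximally promote the resulting powers of the $T$-variables into the partial symmetric function on $X_c\cup S$, and then reduce the whole statement to a single degree inequality that places every surviving term among the generators of $OI_\lambda$. First I would dispose of the trivial case: since $\varepsilon_{K+\alpha}^{X_c\cup S}=0$ once $K+\alpha>c+|S|$, assume it is nonzero. The sets $X_c\cup S\subseteq\{x_1,\dots,x_d\}$ and $T=\{x_{d+1},\dots,x_n\}$ are disjoint, and every index occurring in $T$ exceeds every index occurring in $X_c\cup S$. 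Skew-commuting $\varepsilon_{\lambda_m-\alpha}^{T}$ to the left past $\varepsilon_{K+\alpha}^{X_c\cup S}$ only introduces an overall sign, which is irrelevant for membership in $OI_\lambda$; combining it with the prefix $x_{d+1}^{m-2}\cdots x_n^{m-2}$ rewrites the left-hand side as a $\Z$-linear combination of terms $M\cdot\varepsilon_{K+\alpha}^{X_c\cup S}$, where each $M$ is a monomial in the variables of $T$ in which exactly $\lambda_m-\alpha$ of the $|T|=\lambda_{m-1}+1$ variables carry exponent $m-1$ and the remaining $\lambda_{m-1}+1-(\lambda_m-\alpha)$ carry exponent $m-2$.

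Because the indices of $T$ all exceed those of $X_c\cup S$ and each $M$ sits to the left, I then maximally promote each $M$ into $\varepsilon_{K+\alpha}^{X_c\cup S}$ via \eqref{eq-promo}. By the maximal promotion principle stated after \eqref{eq-promo}, this yields a sum of terms of the form (leftover monomial)$\,\cdot\,\varepsilon_{K+\alpha+|\kappa|}^{(X_c\cup S)\cup R_\kappa}$ with $R_\kappa\subseteq T$. Since $OI_\lambda$ is a \emph{left} ideal, each such term already lies in $OI_\lambda$ once $\varepsilon_{K+\alpha+|\kappa|}^{(X_c\cup S)\cup R_\kappa}$ is a generator in $O\cal{C}_\lambda$, and it suffices (exactly as in Lemma~\ref{lem-tech1}) to check the \emph{minimal} exponent occurring for each variable set. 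Writing $t=|R_\kappa|$ and $\rho=c+|S|+t$, and computing the minimal $|\kappa|$ as in Lemma~\ref{lem-tech1} (writing $e_j$ for the exponent of $x_j$ in $M$, keep $\kappa_j=1$ on the variables of largest $e_j$ in $R_\kappa$ and promote all others fully, so that $\sum_{x_j\in T}e_j=(\lambda_m-\alpha)+(\lambda_{m-1}+1)(m-2)$), the desired membership reduces to the single inequality
\[
K+\lambda_m+(\lambda_{m-1}+1)(m-2)-(m-3)t-\min(\lambda_m-\alpha,\,t)\;>\;\rho-\delta_\rho(\lambda).
\]

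The crux, and the step I expect to be the main obstacle, is verifying this inequality for every admissible $t$. As in Lemma~\ref{lem-tech1} I would read $\delta_\rho(\lambda)$ off the Young diagram of $\lambda$ — it counts the boxes in the appropriate rightmost columns, as in the schematic picture of Lemma~\ref{lem-tech1} and Figure~\ref{fig-Garnir} — and split into the cases $t\le\lambda_m-\alpha$ (where $\min(\lambda_m-\alpha,t)=t$) and $t>\lambda_m-\alpha$. Substituting $c=n-\lambda_{m-1}-\lambda_m$ and simplifying, the inequality collapses to exactly the hypothesis $K\ge(\lambda_m+1)(h(\lambda)-2)-|S|(h(\lambda)-3)$, with the strict positivity $\alpha>0$ supplying strictness in the tight case $t\le\lambda_m-\alpha$. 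This is precisely the analog of the closing step of Lemma~\ref{lem-tech1}, where the inequality fails unless $\alpha>0$; here the threshold on $K$ is calibrated so that the same phenomenon carries the argument through in the presence of the extra factor $\varepsilon_{K+\alpha}^{X_c\cup S}$ and the extra variables $S$.
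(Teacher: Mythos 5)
Your reduction is exactly the one the paper uses: its proof of Lemma~\ref{lem-tech2} likewise expands $\varepsilon_{\lambda_m-\alpha}^{T}$ into monomials, slides them to the left of $\varepsilon_{K+\alpha}^{X_c\cup S}$ (discarding signs, since each term is shown separately to lie in $OI_{\lambda}$), maximally promotes, and then, for each resulting variable set $X_c\cup S\cup R_{\tau}$, tests the minimal degree occurring against the defining condition $r>\rho-\delta_{\rho}(\lambda)$ for generators of $OI_{\lambda}$. Your minimal-degree formula
\[
K+\lambda_m+(\lambda_{m-1}+1)(m-2)-(m-3)t-\min(\lambda_m-\alpha,t)
\]
is the correct one; it in fact carries the summand $K$ through correctly, whereas the formula printed in the paper omits it.

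However, your closing case analysis --- the step you yourself single out as the crux --- attributes the role of $\alpha>0$ to the wrong case. In the regime $t\le\lambda_m-\alpha$ you have $\omega:=\min(\lambda_m-\alpha,t)=t$, so the inequality to verify becomes
\[
K+\lambda_m+(\lambda_{m-1}+1)(m-2)-(m-2)t\;>\;\rho-\delta_{\rho}(\lambda),
\]
in which $\alpha$ does not appear at all (neither $\rho=c+|S|+t$ nor $\delta_{\rho}(\lambda)$ depends on $\alpha$). Hence $\alpha>0$ cannot ``supply strictness'' there; its only effect on that case is to shrink its range of $t$, excluding $t=\lambda_m$. The hypothesis $\alpha>0$ must instead do its work in the complementary regime $t>\lambda_m-\alpha$, where $\omega=\lambda_m-\alpha$ and the left-hand side acquires the summand $\lambda_m-\omega=\alpha$; this is precisely where the paper locates the critical case, namely $|R_{\tau}|=\lambda_m$, observing that there the inequality fails unless $\alpha>0$. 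So as written, the decisive verification in your sketch is inconsistent: the case you call tight yields an $\alpha$-free inequality that either holds or fails independently of $\alpha$, and the genuine appeal to $\alpha>0$ (the whole reason the lemma excludes $\alpha=0$, whose statement is essentially the Garnir term handled only later in Proposition~\ref{prop_mlambdam}) happens in the other case. To complete the proof, redo the case analysis with the $\alpha$-dependence placed in the regime $t>\lambda_m-\alpha$, and check the $\alpha$-free inequality of the first regime directly from the hypothesis on $K$ together with the computation of $\delta_{\rho}(\lambda)$ from the shape of $\lambda$, as in Lemma~\ref{lem-tech1}.
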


\begin{proof}
The proof is similar to the previous lemma.  Write $\varepsilon_{\lambda_m-\alpha}^{T}$ as a sum of monomials, and for each summand slide these monomials to the left of $\varepsilon_{K+\alpha}^{X_c\cup S}$ acquiring a linear combination of terms of the form
\begin{equation} \label{eq_step2}
x_{d+1}^{a_1} x_{d+2}^{a_2} \dots x_{n}^{a_{n-d}} \varepsilon_{K+\alpha}^{X_c\cup S}
\end{equation}
where $\lambda_m-\alpha$ of the exponents are $h(\lambda)-1$ and the rest are $h(\lambda)-2$.
The sign resulting from sliding the monomials will not be relevant as we will show that each individual term in the sum is congruent to zero in $OI_{\lambda}$.

The maximal promotion of the expression in \eqref{eq_step2} results in a linear combination of terms of the form
\[
x_{d+1}^{a_1-\tau_1} x_{d+2}^{a_2-\tau_2} \dots x_{n}^{a_{n-d}-\tau_{n-d}} \varepsilon_{K+\alpha+|\tau|}^{X_c\cup S \cup R_{\tau}}
\]
where $1 \leq \tau_j \leq a_j$ and $x_j \notin R_{\tau}$ only when $\kappa_j=a_j$.  For a fixed set of variables $X_c\cup S \cup R_{\tau}$ one can show that the smallest value of $r$ such that $\varepsilon_r^{X_c\cup S \cup R_{\tau}}$ occurs in the maximal expansion is
\[
 r = \left( (h(\lambda)-2)(\lambda_{m-1}+1) + \lambda_m + \alpha \right) - \left( (h(\lambda)-3)|R_{\tau}| - \omega \right)
\]
where $\omega = \min\left( |R_{\tau}|, \lambda_m-\alpha\right)$. Set
\[
\rho = |X_c\cup S \cup R_{\tau}| = n-\lambda_{m-1}-\lambda_m +|S| + |R_{\tau}|.
\]Then $\varepsilon_r^{X_c\cup S \cup R_{\tau}} \in OI_{\lambda}$ if $r>\rho - \delta_{\rho}$, which after simplification amounts to the inequality
\[
(h(\lambda)-1)(\lambda_m + \lambda_{m-1}) + (\lambda_m - \omega) > (n -\delta_{\rho}) +(|S|+|R_{\tau}|)(h(\lambda)-2).
\]
When $|R_{\tau}|=\lambda_m$ it follows that $\omega = \lambda_m - \alpha$ and this inequality fails unless $\alpha>0$.
\end{proof}

%
\section{Identification of top degree components of $OH(X^{\lambda})$} \label{sec-Specht}
%

In this section we work over a field $\mathbb{F}$ for convenience.

%
\subsection{Specht modules for $\Ho$}
%

For $w\in S_n$ let $w(\mathfrak{t})$  be the action of $S_n$ on a standard tableau $\mathfrak{t}$ given by permuting the entries.  Given a standard tableau $\mathfrak{t}$ we say that $i$ precedes $j$ in $\mathfrak{t}$ if reading the entries of $\mathfrak{t}$ across rows, starting from the top left, $i$ appears before $j$. Denote by $\mathfrak{t}^{\lambda}$ the row-filled tableau of shape $\lambda$. For example, if $\lambda=(5,2,1,0,0,0,0,0)$ then
\[
 \mathfrak{t}^{\lambda} \quad = \quad
 \xy
 (0,0)*{
 \young(12345,67,8)
 }; \endxy
\]

\begin{defn}
 The Specht module $S^{\lambda}_{-1}$ of $\Ho$ is defined to be the $\mathbbm{F}$-linear span of vectors $v_{\mathfrak{t}}$ indexed by standard Young tableau $\mathfrak{t}$. The action of $\Ho$ is given by
\begin{equation} \label{eq_Specht-action}
T_i(v_{\mathfrak{t}})
=
\left\{
\begin{array}{cl}
  v_{\mathfrak{t}} & \text{if $i$ and $i+1$ are in the same row of $\mathfrak{t}$, otherwise} \\
  -v_{s_i(\mathfrak{t})} & \text{if $i$ precedes $i+1$ in $\mathfrak{t}$} \\
  -v_{s_i(\mathfrak{t})} + 2v_{\mathfrak{t}} & \text{if $i+1$ precedes $i$ in $\mathfrak{t}$}.
\end{array}
\right.
\end{equation}
(Recall our presentation of the Hecke algebra from Section ~\ref{subsec_hecke}.)
To rewrite a nonstandard tableau in terms of standards, one makes use of the following rewriting relations.
\begin{enumerate}
  \item {\bf Row relations:} If $\mathfrak{t}$ and $\mathfrak{t}'$ are identical except that  two adjacent  entries of a row have been interchanged then
\begin{equation} \nn
  v_{\mathfrak{t}} =  v_{\mathfrak{t}'}.
\end{equation}
  \item {\bf Garnir relations:} A {\em Garnir node} of $\lambda$ is a node $(a,b)$ such that $(a+1,b)$ is also a node of $\lambda$.   The {\em Garnir belt} of $\lambda$ associated to $(a,b)$ consists of those nodes in the set $A=\{ (a,f) \mid b \leq f \leq \lambda_a \}$ and $B=\{ (a+1,g) \mid 1 \leq g \leq b\}$, see Figure \ref{fig-Garnir}.

      Assume that $\mathfrak{t}$ is row standard but not standard. Then there is a column containing adjacent entries $i$ above $j$ for which $i>j$. Consider the Garnir node containing $i$.  Let $S_{A}$, $S_{B}$, and $S_{A \cup B}$ denote the subgroups of $S_n$ permuting only the entries of a tableau $\mathfrak{t}$ in $A$, $B$, and $A \cup B$, respectively.
      For $q=-1$ the Garnir relation is the sum over minimal coset representatives of $S_{A} \times S_{B} \subset S_{A \cup B}$
\begin{equation}
  \sum_{w \in S_{A} \times S_{B}/  S_{A \cup B}} (-1)^{\ell(w)}v_{w(\mathfrak{t})}=0.
\end{equation}
      As $w$ varies over all coset representatives the tableaux $w(\mathfrak{t})$ give all row standard tableaux obtained by permuting the entries $A \cup B$ in $\mathfrak{t}$.  For general $q$ the above formulas are modified by powers of $(-q)$.
\end{enumerate}
\end{defn}

Our description above follows~\cite{Mur}. See \cite{DJ1,DJ2,Mathas} for more details on Specht modules and \cite{JM} for the specific case of $q=-1$.   For a generalization to the affine Hecke algebra see \cite{Ram}.

In \cite{KLLO} an alternative presentation of the Specht module is given by generators and relations. For $\Ho$ the Specht module $S^{\lambda}_{-1}$ is freely generated by the vector $v_{t^{\lambda}}$ subject to the relations below.
\begin{enumerate}
  \item {\bf Row relations:} For all non identity permutations $w$ in the row stabilizer $W^{\lambda}$ of $\mathfrak{t}^{\lambda}$ the relation below.
\[
T_{w}(v_{\mathfrak{t}^{\lambda}}) =v_{\mathfrak{t}^{\lambda}}
\] holds.
\end{enumerate}

\begin{enumerate}\setcounter{enumi}{1}
  \item {\bf Garnir relations:} For each Garnir node $(a,b)$ of $\lambda$ the relation
  \begin{equation} \nn
  \sum_{w \in S_{A} \times S_{B}/  S_{A \cup B}} (-1)^{\ell(w)} T_{w}(v_{\mathfrak{t}^{\lambda}})=0
\end{equation}
holds.
\end{enumerate}
Hence, one need only verify the row and Garnir relations on the generating basis vector $v_{\mathfrak{t}^{\lambda}}$.
For a second root of unity one can find a slightly smaller set of Garnir relations, but we will not need this simplification here.  For a completely different set of generators and relations of the Specht module see \cite{BKW,KMR}.

%
\subsection{A bijection between monomials and standard tableaux}
%

Given a monomial in $n$ variables where each variable appears with maximum degree $n-1$, consider the composition $\mu = \mu_1, \ldots, \mu_n$ of $n$ where $\mu_i$ is the number of variables in the monomial of degree $i-1$. We will say that the monomial has \textit{shape $\mu$}. As an example, consider $m = x_1x_2^2 = x_1x_2^2x_3^0x_4^0\in \opol_4$. Then $m$ has shape $(2, 1, 1, 0).$

Given a $\lambda$-tableau $\mathfrak{t}$ let $j$ be the number occupying node $(a,b)$, and set $h(j)=a$.  Consider the map
\begin{align}
  \mathfrak{t} &\to m^{\mathfrak{t}}:=
  x_1^{h(1)-1}x_2^{h(2)-1} \dots x_n^{h(n)-1}
\end{align}
sending a $\lambda$-tableau to a monomial of shape $\lambda$.  The exponent of the variable $x_j$ is just the height minus one of the corresponding box in the tableau $\mathfrak{t}$. It is not difficult to check that this map is a bijection between the set of standard tableaux of shape $\lambda$ and elements of maximal degree in $O\mathcal{B}(\lambda)$.  This bijection is extended to a bijection between all row standard tableau and monomials of shape $\lambda$ in \cite{Mbirka}.

Denote by $m^{\lambda}$ the basis element of $OH(X^{\lambda})$ corresponding to the tableau $\mathfrak{t}^{\lambda}$. Given a part $\lambda_i$ of a partition $\lambda=\lambda_1 \geq \lambda_2 \geq \dots \geq \lambda_{h(\lambda)}$, set $p=\lambda_1 + \dots + \lambda_{i-1}$ and write
\[
x_{(\lambda_i)} := x_{p+1}x_{p+2} \cdots
x_{p+\lambda_i}.
\]
Abusing notation, we denote by $x_{(\lambda_i)}^{i-1}$ the monomial $x_{p+1}^{i-1}x_{p+2}^{i-1}\cdots x_{p+\lambda_i}^{i-1}$
so that
\begin{equation} \label{eq_mlambda}
 m^{\lambda} = x_{(\lambda_1)}^0x_{(\lambda_2)}^1 \dots x_{(\lambda_m)}^{h(\lambda)-1} =
 x_1^0 \dots x_{\lambda_1}^0 x_{\lambda_1+1}^1 \dots
  x_{\lambda_1+\lambda_2}^1 \dots x_{|\lambda|-\lambda_m}^{h(\lambda)-1} \dots x_{|\lambda|}^{h(\lambda)-1}.
\end{equation}

\begin{rem}
Our main theorem proves that incorporating certain signs in the bijection above yields an $\Ho$-module isomorphism from the Specht module $S_{-1}^{\lambda}$ to $OH(X^{\lambda})$. The sign for an arbitrary standard tableau $\mathfrak{t}$ can be deduced from the sign of $m^{\lambda}$ by acting by an appropriate $T_w$.
\end{rem}

%
\subsection{Proving Specht module relations in $OH(X^{\lambda})$}
%

%
\subsubsection{Row relations}
%

\begin{lem}[Row relations] \label{prop_row}
For all non-identity permutations $w$ in the row stabilizer $W^{\lambda}$ of $t^{\lambda}$ the relation
\[
T_{w}(m^{\lambda}) =m^{\lambda}
\]
holds in $OH(X^{\lambda})$.
\end{lem}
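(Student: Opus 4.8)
The plan is to reduce the statement $T_w(m^\lambda) = m^\lambda$ for all $w$ in the row stabilizer $W^\lambda$ to the case of a single adjacent transposition $T_i$, and then verify that transposition case by a direct computation using the action $T_i = 1 + (x_i - x_{i+1})\partial_i = 1 + B_i$ established in Lemma~\ref{lem_Baction}'s surrounding discussion. Since the row stabilizer $W^\lambda$ is generated by those $s_i$ for which $i$ and $i+1$ lie in the same row of $\mathfrak{t}^\lambda$, and since $T_w$ factors as a product of the corresponding $T_i$ along any reduced word, it suffices to show $T_i(m^\lambda) = m^\lambda$ whenever $i,i+1$ are in a common row of $\mathfrak{t}^\lambda$. (One should check that this reduction is well-defined, i.e. that applying the generators successively stays within the span of row-equivalent monomials; the cleanest route is induction on $\ell(w)$, peeling off one generator at a time and using that the intermediate results remain of the required form.)

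The core computation is then the single-generator identity. When $i$ and $i+1$ occupy the same row of $\mathfrak{t}^\lambda$, the monomial $m^\lambda$ in \eqref{eq_mlambda} contains the factors $x_i$ and $x_{i+1}$ to equal powers, since the height function $h$ takes the same value on both boxes. I would isolate this common block and apply $B_i = (x_i - x_{i+1})\partial_i$ directly. The key is that $\partial_i$ annihilates any skew monomial in which $x_i$ and $x_{i+1}$ appear to equal powers: this follows from $\partial_i(x_i x_{i+1}) = 0$ together with the Leibniz rule \eqref{eq_Leibniz} and the skew-commutativity relations, exactly as in the ``both $i,i+1 \in S$'' case of Lemma~\ref{lem_Baction}. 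Thus $B_i(m^\lambda) = 0$ and $T_i(m^\lambda) = m^\lambda$, as desired.

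The main obstacle I expect is bookkeeping the signs and the skew-commutation carefully. Because $\opol_n$ is noncommutative and $\partial_i$ obeys a twisted Leibniz rule involving $s_i$ (which itself introduces signs via \eqref{eqn-S-action}), one cannot simply ``factor out'' the $x_i, x_{i+1}$ block without tracking the parity of the number of variables it must skew-commute past. The safe approach is to write $m^\lambda$ explicitly as a product of blocks $x_{(\lambda_j)}^{j-1}$ as in \eqref{eq_mlambda}, locate the two equal-exponent factors $x_i^{h-1} x_{i+1}^{h-1}$ that $\partial_i$ sees, and reduce to verifying $\partial_i(x_i^{h-1} x_{i+1}^{h-1}) = 0$ for each relevant power, with the remaining factors contributing only an overall sign that is killed once $B_i$ returns zero. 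Everything else — commuting the unused variables back and forth — is routine once this vanishing is in hand.
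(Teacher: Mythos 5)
Your proposal is correct and takes essentially the same approach as the paper: the paper's proof also reduces to the generators $s_i \in W^{\lambda}$ and computes $T_i(m^{\lambda}) = m^{\lambda} + B_i(m^{\lambda}) = m^{\lambda}$, justified by the observation that $B_i = (x_i - x_{i+1})\partial_i$ annihilates any polynomial that is odd symmetric in $x_i$ and $x_{i+1}$ — which is exactly your point that equal exponents give $\partial_i\bigl(x_i^{h-1}x_{i+1}^{h-1}\bigr) = 0$ via $\partial_i(x_ix_{i+1})=0$ and the Leibniz rule. The sign bookkeeping you flag is harmless for precisely the reason you give, since whatever sign the factored-out block acquires is killed once $B_i$ returns zero.
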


\begin{proof}
For each $s_i \in W^{\lambda}$ the action of $\Ho$ is given by
\begin{equation}
  T_i(m^{\lambda}) = m^{\lambda}+B_i(m^{\lambda})
  = m^{\lambda},
\end{equation}
since $B_i$ annihilates polynomials that are odd symmetric in variables $i$ and $i+1$.
\end{proof}

%
\subsubsection{Garnir relations}
%

For any Garnir node $(a,b)$ define the {\em Garnir element} associated to $(a,b)$ as
\[
G_{(a,b)}:=\sum_{w \in S_{A} \times S_{B}/  S_{A \cup B}} T_{w}(m^{\lambda})
\]in $OH(X^{\lambda})$.

\begin{figure}
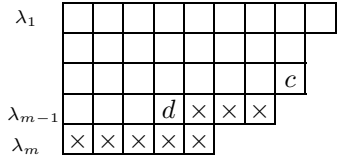

 \[
\xy
(0,0)*{
\young(\;\;\;\;\;\;\;\;\;,\;\;\;\;\;\;\;\;,\;\;\;\;\;\;\;c,\;\;\;d\times\times\times,\times\times\times\times\times)};
(-23,-9)*{\scs \lambda_m};
(-22,-5)*{\scs \lambda_{m-1}};
(-23,8)*{\scs \lambda_1};
\endxy
\]
\caption{This figure features the Young diagram of the partition $(9,8,8,7,5)$ with the Garnir belt associated with the node $(m-1,\lambda_m)$ marked with $\times$.  In the tableau $t^{\lambda}$ the number in the box labeled $c$ is $n-\lambda_{m-1}$ and the box labeled $d$ is $n-\lambda_{m-1}-1$. }\label{fig-Garnir}
\end{figure}

We first focus on the Garnir relation associated to the bottom corner of a Young diagram. For a partition $\lambda$ with $h(\lambda)=m$, this Garnir element has the form $G_{(m-1,\lambda_m)}$.
The Garnir belt associated to $G_{(m-1,\lambda_m)}$ consists of
$A=\{ (m-1,f) \mid \lambda_m \leq f \leq \lambda_{m-1} \}$ and
$B=\{ (m,g) \mid 1 \leq g \leq \lambda_m\}$.
See  Figure~\ref{fig-Garnir} for an example of a bottom corner Garnir belt.

\begin{prop} \label{prop_Garnir-rewrite}
The Garnir element corresponding to $(m-1,\lambda_m)$ can be written in the form
\begin{equation}
  G_{(m-1,\lambda_m)}= \pm
  x_{(\lambda_1)}^0 \dots
  x_{(\lambda_{m-2})}^{m-3}
   x_{(\lambda_{m-1})}^{m-2}x_{(\lambda_m)}^{m-2} \cdot \varepsilon_{\lambda_m}^{A \cup B}\cong_{\lambda}0,
\end{equation}
where $A$ and $B$ are the set of terms in the Garnir belt associated to $(m-1,\lambda_m)$.
\end{prop}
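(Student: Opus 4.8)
The plan is to prove the two assertions separately: the algebraic rewriting of $G_{(m-1,\lambda_m)}$ as a monomial times an odd partial elementary symmetric function, and then the congruence of that product to zero modulo $OI_\lambda$.

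For the rewriting I would start from the explicit form of $m^\lambda$ in \eqref{eq_mlambda}. The entries of the Garnir belt $A\cup B$ occupy exactly the variables $T=\{x_{d+1},\dots,x_n\}$ of rows $m-1$ and $m$: the $|A|=\lambda_{m-1}-\lambda_m+1$ variables coming from $A$ carry exponent $m-2$ in $m^\lambda$, while the $|B|=\lambda_m$ variables coming from $B$ carry exponent $m-1$. Since $S_A$ and $S_B$ permute entries lying within a single row, Lemma~\ref{prop_row} gives $T_w(m^\lambda)=m^\lambda$ for $w\in S_A\times S_B$, so the sum over coset representatives is well defined and ranges over all redistributions of the promoted exponent among the $|A\cup B|=\lambda_{m-1}+1$ variables of $T$. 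Each summand $T_w(m^\lambda)$ agrees with $m^\lambda$ away from $T$ and, on $T$, is a monomial in which exactly $\lambda_m$ variables carry exponent $m-1$ and the rest carry $m-2$. Factoring out the common prefix $x_{(\lambda_1)}^0\cdots x_{(\lambda_{m-2})}^{m-3}x_{(\lambda_{m-1})}^{m-2}x_{(\lambda_m)}^{m-2}$ (all of $T$ at exponent $m-2$, together with the rows above) leaves a signed sum of squarefree degree-$\lambda_m$ monomials in $T$. The content of this step is to verify that the signs produced by the $T_i$-action \eqref{eq_Specht-action}, together with the skew-commutation signs incurred in reordering, assemble precisely into the coefficients $(-1)^{T(i)-1}$ appearing in the definition \eqref{eqn-defn-e} of $\varepsilon_{\lambda_m}^{A\cup B}=\varepsilon_{\lambda_m}^T$. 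I would organize this bookkeeping one adjacent transposition at a time, matching each against $B_i$ as computed in Lemma~\ref{lem_Baction}.

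For the congruence $\cong_\lambda 0$ I would use that $OI_\lambda$ is a left ideal, so it suffices to show that the tail $x_{(\lambda_{m-1})}^{m-2}x_{(\lambda_m)}^{m-2}\varepsilon_{\lambda_m}^T=\prod_{j=c+1}^{n}x_j^{m-2}\,\varepsilon_{\lambda_m}^T$ lies in $OI_\lambda$; left multiplication by the monomial $x_{(\lambda_1)}^0\cdots x_{(\lambda_{m-2})}^{m-3}$ coming from rows $1$ through $m-2$ then keeps it there. The crucial observation is that this tail carries the extra fuel $x_{c+1}^{m-2}\cdots x_d^{m-2}$ supplied by the non-belt variables of row $m-1$, which is exactly what is absent in the $\alpha=0$ boundary case excluded from Lemma~\ref{lem-tech1}. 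I would run the maximal-promotion procedure of Section~\ref{sec-max} on $\prod_{j=c+1}^{n}x_j^{m-2}\,\varepsilon_{\lambda_m}^T$, promoting the variables $x_{d+1},\dots,x_n$ of $T$ into $X_d$ and then consuming the row-$(m-1)$ fuel $x_{c+1},\dots,x_d$. The resulting terms are of exactly the two shapes treated by the technical lemmas: those in which the fuel is not consumed give partial elementary functions $\varepsilon_r^{X_d\cup R}$ with $R\subseteq T$, governed by Lemma~\ref{lem-tech1}, whereas the boundary terms that would survive at $\alpha=0$ are precisely the ones that consume a subset $S\subseteq\{x_{c+1},\dots,x_d\}$ into a factor $\varepsilon_{K+\alpha}^{X_c\cup S}$, which is the form handled by Lemma~\ref{lem-tech2}. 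In each case the relevant minimal-degree inequality holds, with the extra fuel forcing the strict inequality ($\alpha>0$) that Lemma~\ref{lem-tech1} alone could not supply, so every term lies in $OI_\lambda$ and the tail is congruent to zero.

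The main obstacle I anticipate is not the promotion mechanism but the sign computations. In the rewriting step one must match the Specht-action signs of \eqref{eq_Specht-action} and the skew-commutation signs against the intrinsic signs $(-1)^{T(i)-1}$ of $\varepsilon_{\lambda_m}^T$, and this is the genuinely delicate point. In the congruence step the signs introduced by iterating \eqref{SplitEqn2} and \eqref{eq-promo} are harmless, since the conclusion only requires that each promoted term lie in $OI_\lambda$ individually, so no cancellation between terms is needed; this leaves the sign matching in the rewriting as the principal difficulty.
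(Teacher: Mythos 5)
Your first step --- identifying each summand $T_w(m^{\lambda})$ as a signed monomial agreeing with $m^{\lambda}$ off the Garnir belt, factoring out the common prefix, and matching the residual signs against the alternating signs in \eqref{eqn-defn-e} one adjacent transposition at a time --- is exactly the paper's proof of the rewriting half of this proposition; the paper carries out the sign bookkeeping via explicit minimal coset representatives $w=\sigma_{d+1,i_1}\cdots\sigma_{d+k,i_k}$ and a computed sign $(-1)^{m\ell(w)}$, but the strategy is the same, so this half of your plan is sound.

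The congruence $\cong_{\lambda}0$ --- which the paper actually defers to Proposition~\ref{prop_mlambdam} --- is where your plan has a genuine gap. You run maximal promotion directly on the tail $\prod_{j=c+1}^{n}x_j^{m-2}\,\varepsilon_{\lambda_m}^{T}$ and assert that the resulting terms are ``of exactly the two shapes treated by the technical lemmas.'' They are not. Promotion \eqref{eq-promo} can only use a variable not already in the index set, so the only possible promoters of $\varepsilon_{\lambda_m}^{T}$ are the fuel variables $x_{c+1},\ldots,x_d$, and they produce terms $\varepsilon_{\lambda_m+|\tau|}^{T\cup R_{\tau}}$ with $\emptyset\neq R_{\tau}\subseteq\{x_{c+1},\ldots,x_d\}$; such a term matches neither lemma, since Lemma~\ref{lem-tech1} requires the index set to be exactly $T$ and Lemma~\ref{lem-tech2} requires a product of two elementary functions, one of whose index sets contains all of $X_c$. (The charitable alternative reading --- expand $\varepsilon_{\lambda_m}^{T}$ into monomials and promote against $\varepsilon_0^{X_d}$, as in the proof of Lemma~\ref{lem-tech1} --- fares no better: one lands precisely in the excluded $\alpha=0$ case, and the fuel cannot be ``consumed'' there because $x_{c+1},\ldots,x_d$ already lie in $X_d$.) These stray terms also cannot be disposed of directly: for $m\geq 3$ and $k=|T\cup R_{\tau}|\leq\lambda_{m-1}+\lambda_m$ one has $\delta_k(\lambda)=0$, so the generator condition $r>k-\delta_k(\lambda)$ is unattainable. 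Concretely, for $\lambda=(2,2,2)$ the tail is $x_3x_4x_5x_6\varepsilon_2^{T}$ with $T=\{x_4,x_5,x_6\}$, and your promotion gives $\pm x_4x_5x_6\varepsilon_3^{T\cup\{x_3\}}\mp x_4x_5x_6\varepsilon_3^{T}$: the second term is Lemma~\ref{lem-tech1} with $\alpha=1$, but the first is stuck, as $\varepsilon_3^{\{x_3,x_4,x_5,x_6\}}\notin O\cal{C}_{\lambda}$ (here $3\not>4-\delta_4(\lambda)=4$) and neither lemma speaks to it. The idea you are missing is the paper's two-step maneuver in Proposition~\ref{prop_mlambdam}: promote the fuel against $\varepsilon_0^{X_c}$ while keeping $\varepsilon_{\lambda_m}^{A\cup B}$ intact, so that every newly created elementary function has index set $X_c\cup R_{\kappa}\supseteq X_c$, and then apply the splitting identity $\varepsilon_r^{S}=\sum_{j}(-1)^{|U|\cdot j}\varepsilon_{r-j}^{U}\varepsilon_j^{V}$ for $S=U\sqcup V$ to the product $\varepsilon_{|\kappa|}^{X_c\cup R_{\kappa}}\varepsilon_{\lambda_m}^{A\cup B}$. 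Only this redistribution of degree between a factor containing $X_c$ and a factor on exactly $T$ produces the three kinds of terms the paper can kill: the generator $\varepsilon_{|\kappa|+\lambda_m}^{X_c\cup R_{\kappa}\cup A\cup B}$, the $j>\lambda_m$ terms (Lemma~\ref{lem-tech1}), and the $j<\lambda_m$ terms (Lemma~\ref{lem-tech2}). Without it, your promotion never brings the variables $x_1,\ldots,x_c$ into play alongside an $\varepsilon^{T}$-type factor, and the degree-versus-size inequality defining $OI_{\lambda}$ can never be met.
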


\begin{proof}

Let $Y_{(m-1,\lambda_m)}$ be the set of all row standard tableaux obtained by permuting elements of $A\cup B$ within the Garnir belt of $\mathfrak{t}^{\lambda}$ and identical to $\mathfrak{t}^{\lambda}$ off of the Garnir belt. As observed earlier, varying over all  coset representatives $w\in S_A\times S_B / S_{A\cup B}$, the collection of $w(\mathfrak{t}^{\lambda})$ is exactly $Y_{(m-1,\lambda_m)}$.

Consider $\mathfrak{t}\in Y_{(m-1,\lambda_m)}$. Let $i_1< \cdots <i_k$ be the entries in the top row of the Garnir belt in $\mathfrak{t}$ where $k = \lambda_{m-1}-\lambda_m+1$. Let $d=n-\lambda_{m-1}-1$. Then $A\cup B = \{d+1, d+2, \ldots, n\}$, see Figure \ref{fig-Garnir}.  Let  $w$ be a minimal coset representative taking $\mathfrak{t}^{\lambda}$ to $\mathfrak{t}$, i.e. $w(\mathfrak{t}^{\lambda}) = \mathfrak{t}$.  Since $\mathfrak{t}$ is row standard  $d+j\leq i_j$ for all $j$. For $q<r$ define $\sigma_{q, r} :=s_{r-1} \cdots s_{q+1}s_{q}$, and if $q=r$ define $\sigma_{q,r} = 1$. Then we may assume $w$ has the form $w := \sigma_{d+1, i_1}\sigma_{d+2,i_2}\cdots \sigma_{d+k,i_{k}}$.  A straightforward calculation shows that
\[
T_w(m^{\lambda}) =
(-1)^{m\ell(w)} x_{(\lambda_1)}^0x_{(\lambda_2)}^1 \cdots x_{(\lambda_{m-2})}^{m-3} \cdot x_{n-\lambda_{m-1}-\lambda_{m}+1}^{m-2} \cdots x_{d}^{m-2} \cdot \prod_{j=1}^{k+\lambda_m} x_{d+j}^{h(j)-1}
\]
where $h(j)$ is the height at which the entry $d+j$ appears in the tableau $w(\mathfrak{t}^{\lambda})$.

Consider $\mathfrak{t}\in Y_{(m-1,\lambda_m)}$ and its associated minimal coset representative $w\in S_A\times S_B / S_{A\cup B}$. Let $i_{k+1} < \cdots < i_n$ be the entries in the bottom row of $\mathfrak{t}$, and  let $x_w:= x_{i_{k+1}} \cdots x_{i_n}$. Then we can write
\[
 \prod_{j=1}^{k+\lambda_m} x_{d+j}^{h(j)-1}
  = (-1)^{m\ell(w)}\left(\prod_{j=1}^{k+\lambda_m} x_{d+j}^{m-2}\right) x_w.
\]
Putting these together we get the following expression for the Garnir element
\begin{align*}
G_{(m-1,\lambda_m)} &= \sum_{w \in S_{A} \times S_{B}/  S_{A \cup B}} (-1)^{\ell(w)}T_{w}(m^{\lambda}) \\
&=  x_{(\lambda_1)}^0x_{(\lambda_2)}^1 \cdots x_{(\lambda_{m-2})}^{m-3}x_{(\lambda_{m-1})}^{m-2}x_{(\lambda_m)}^{m-2}\sum_{w \in S_{A} \times S_{B}/  S_{A \cup B}}(-1)^{\ell(w)} x_w.
\end{align*}

It only remains to verify that the signed sum of terms $x_w$ is the appropriate odd partial elementary symmetric function. Given two coset representatives $w=s_i\cdot w'$ their monomials $x_w$ and $x_{w'}$ are identical except that one contains $x_i$ while the other contains $x_{i+1}$. Furthermore $\ell(w) = \ell(w')+1$, so $x_w$ and $x_{w'}$  are given opposite sign in the Garnir element $G_{(m-1,\lambda_m)}$. This is exactly the sign convention in $\varepsilon_{\lambda_m}^{A\cup B}$, so $\sum_{w \in S_{A} \times S_{B}/  S_{A \cup B}}(-1)^{\ell(w)} x_w = \pm \varepsilon_{\lambda_m}^{A\cup B}$.
\end{proof}

\begin{prop} \label{prop_mlambdam}
Given a partition $\lambda$ with $|\lambda|=n$ and $h(\lambda)=m$, then
$G_{(m-1,\lambda_m)} \cong_{\lambda} 0$.
\end{prop}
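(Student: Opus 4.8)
The plan is to feed the rewriting of Proposition~\ref{prop_Garnir-rewrite} into the two technical lemmas. By that proposition,
\[
G_{(m-1,\lambda_m)} = \pm\, x_{(\lambda_1)}^0\cdots x_{(\lambda_{m-2})}^{m-3}\cdot x_{(\lambda_{m-1})}^{m-2}x_{(\lambda_m)}^{m-2}\cdot \varepsilon_{\lambda_m}^{A\cup B}.
\]
Setting $c=n-\lambda_{m-1}-\lambda_m$ and $d=n-\lambda_{m-1}-1$ as in Lemmas~\ref{lem-tech1} and~\ref{lem-tech2}, I first check that $A\cup B=T=\{x_{d+1},\dots,x_n\}$ and that the factor $x_{(\lambda_{m-1})}^{m-2}x_{(\lambda_m)}^{m-2}$ is exactly $x_{c+1}^{m-2}\cdots x_n^{m-2}$, i.e.\ the product of $x_{c+1}^{m-2}\cdots x_d^{m-2}$ over the $\lambda_m-1$ variables indexed by $S=\{x_{c+1},\dots,x_d\}$ and the factor $x_{d+1}^{m-2}\cdots x_n^{m-2}$ over $T$. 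Since $OI_\lambda$ is a left ideal, left multiplication by the prefix $x_{(\lambda_1)}^0\cdots x_{(\lambda_{m-2})}^{m-3}$ (a monomial in $x_1,\dots,x_c$) preserves membership, so it suffices to prove the relation
\[
x_{c+1}^{m-2}\cdots x_n^{m-2}\,\varepsilon_{\lambda_m}^{T}\;\cong_\lambda\;0,
\]
which I will call $(\star)$.

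To establish $(\star)$ I would run the maximal promotion argument of Section~\ref{sec-max} in the same manner as the proofs of Lemmas~\ref{lem-tech1} and~\ref{lem-tech2}. Namely, I expand $\varepsilon_{\lambda_m}^{T}$ as a sum of monomials, absorb these into the monomial $x_{c+1}^{m-2}\cdots x_n^{m-2}$ to obtain monomials in the variables $x_{c+1},\dots,x_n$, seed with $\varepsilon_0^{X_c}=1$, and maximally promote via~\eqref{eq-promo}. Every variable set arising then has the form $X_c\cup R$ with $R\subseteq S\cup T$, and the minimal-$r$ principle recorded just after the definition of maximal promotion reduces each membership $\varepsilon_r^{X_c\cup R}\in OI_\lambda$ to a single inequality $r>\rho-\delta_\rho(\lambda)$, where $\rho=|X_c\cup R|$. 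Splitting according to how many of the $S$-variables are promoted into the seed, the sets with $R\supseteq S$ are governed precisely by the degree computation in Lemma~\ref{lem-tech1} (whose seed is $X_d=X_c\cup S$), and the sets with $R\cap S\subsetneq S$ by the computation in Lemma~\ref{lem-tech2} (applied with the subset $R\cap S$ in the role of its $S$). Thus the two lemmas together cover every term produced by the promotion.

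The main obstacle is the sharpness of Lemmas~\ref{lem-tech1} and~\ref{lem-tech2}: each inequality holds exactly when the shift parameter is $\alpha>0$ and fails at $\alpha=0$, whereas $(\star)$ carries the \emph{unshifted} factor $\varepsilon_{\lambda_m}^{T}$, with $\lambda_m$ being the bottom-row size of the Garnir belt. The crux is therefore to show that the extra factor $x_{c+1}^{m-2}\cdots x_d^{m-2}$ on the $\lambda_m-1$ variables of $S$ — which get promoted, enlarging $R$ and raising the minimal elementary degree — always supplies the strict excess needed to clear the threshold, so that the effective parameter is $\alpha\geq 1$ for every variable set $X_c\cup R$ and no boundary contribution survives. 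Verifying this is a degree bookkeeping task: for each $X_c\cup R$ one computes the minimal $r$ appearing in the promotion and compares it against $\rho-\delta_\rho(\lambda)$, reproducing the inequalities of the two lemmas. Signs introduced by skew-commutation are not an obstacle here, since, as in the proofs of the two lemmas, each individual monomial term is shown to lie in $OI_\lambda$ on its own.
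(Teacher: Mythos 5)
Your opening reduction is harmless: by Proposition~\ref{prop_Garnir-rewrite} and the fact that $OI_{\lambda}$ is a left ideal, it does suffice to prove your relation $(\star)$, and this is essentially how the paper also strips off the prefix $x_{(\lambda_1)}^0\cdots x_{(\lambda_{m-2})}^{m-3}$. The fatal problem is the second half of your plan, where you expand $\varepsilon_{\lambda_m}^{T}$ into monomials and assert that ``each individual monomial term is shown to lie in $OI_\lambda$ on its own.'' This is impossible, not merely hard to bookkeep. Up to sign, the monomials of the prefix times $(\star)$ are exactly the monomials $m^{w(\mathfrak{t}^{\lambda})}$ of the row-standard tableaux occurring in the Garnir sum; in particular the monomial $\pm x_{n-\lambda_m+1}\cdots x_n$ of $\varepsilon_{\lambda_m}^{T}$ contributes $\pm m^{\lambda}$ itself. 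Since $m^{\lambda}$ belongs to the basis $O\mathcal{B}(\lambda)$, it is nonzero in $OH(X^{\lambda})$, so that monomial of $(\star)$ cannot lie in $OI_{\lambda}$ (otherwise left multiplication by the prefix would force $m^{\lambda}\in OI_{\lambda}$). The Garnir element is congruent to zero only because of cancellation among its monomials, and a term-by-term promotion argument is blind to cancellation. Your own ``sharpness'' worry is exactly where this surfaces, and the claimed ``effective $\alpha\geq 1$'' is false: for $\lambda=(2,2,2)$ (so $m=3$, $c=2$, $d=3$, $S=\{x_3\}$, $T=\{x_4,x_5,x_6\}$, and trivial prefix), the monomial $x_3x_4^2x_5^2x_6$ of $(\star)$ produces in its maximal promotion the term $\pm x_4x_5\,\varepsilon_{4}^{\{x_1,x_2,x_4,x_5\}}$, and since $\delta_4(\lambda)=0$ the required inequality $4>4-\delta_4(\lambda)$ fails; worse, another monomial of $(\star)$ is $x_3x_4x_5^2x_6^2=m^{\lambda}$ on the nose.

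The missing idea is the one step of the paper's proof that is not promotion at all: never expand $\varepsilon_{\lambda_m}^{A\cup B}$ into monomials. The paper maximally promotes only the $\lambda_m-1$ variables $x_{c+1},\dots,x_d$ against the seed $\varepsilon_0^{X_c}$, keeping $\varepsilon_{\lambda_m}^{A\cup B}$ intact as a right factor, and then applies the product identity for a disjoint union of ordered sets,
\begin{equation*}
\varepsilon_r^{U\cup V}=\sum_{j=0}^{r}(-1)^{|U|\cdot j}\,\varepsilon_{r-j}^{U}\,\varepsilon_{j}^{V},
\end{equation*}
to each product $\varepsilon_{|\kappa|}^{X_c\cup R_{\kappa}}\varepsilon_{\lambda_m}^{A\cup B}$. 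This trades the product for $\pm\varepsilon_{|\kappa|+\lambda_m}^{X_c\cup R_{\kappa}\cup A\cup B}$, which does satisfy the membership inequality, plus cross terms $\varepsilon_{|\kappa|+\lambda_m-j}^{X_c\cup R_{\kappa}}\varepsilon_{j}^{A\cup B}$ with $j\neq\lambda_m$, each of which carries the strictly positive shift $\alpha=|\lambda_m-j|>0$ demanded by Lemma~\ref{lem-tech1} (for $j>\lambda_m$) and Lemma~\ref{lem-tech2} (for $j<\lambda_m$). That identity is precisely the algebraic mechanism encoding the cancellation your argument omits; some such device is unavoidable, because the unshifted ($\alpha=0$) contributions you would need to kill individually include honest basis elements of $OH(X^{\lambda})$.
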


\begin{proof}
Let $c= n-\lambda_{m-1}-\lambda_m$ and $d=n-\lambda_{m-1}-1$, as in Figure~\ref{fig-Garnir}.  Recall that $X_c:=\{x_1,x_2,\dots, x_c \}$.  Consider the monomial $x_{c+1}^{m-2} x_{c+2}^{m-2} \dots x_{d}^{m-2}$ corresponding to those boxes in row $(m-1)$ not involved in the Garnir relation.  Maximally promoting $x_{c+1}^{m-2} x_{c+2}^{m-2} \dots x_{d}^{m-2}\varepsilon_0^{X_{c}}$ results in a linear combination of terms of the form
\[
x_{c+1}^{m-2-\kappa_1} \dots x_{d}^{m-2-\kappa_{\lambda_m-1}}\varepsilon_{|\kappa|}^{X_c\cup R_{\kappa}}
 \]
with $1 \leq \kappa_j \leq m-2$,
as was explained in Section~\ref{sec-max}.  Then by Proposition~\ref{prop_Garnir-rewrite} (after rearranging terms) we have
\[
G_{(m-1,\lambda_m)}= \sum \alpha_{\kappa} x_{(\lambda_1)}^0 \dots x_{(\lambda_{m-2})}^{m-3}
  x_{(\lambda_{m-1})}^{m-2} x_{c+1}^{m-2-\kappa_1} \dots x_{d}^{m-2-\kappa_{\lambda_m-1}} x_{d+1}^{m-2} \dots x_n^{m-2}
  \varepsilon_{|\kappa|}^{X_c\cup R_{\kappa}}\varepsilon_{\lambda_m}^{A \cup B}
\]
where the sum is over all terms in the maximal promotion. The precise form of the value of the coefficients $\alpha_{\kappa}$ will not be relevant here.

The result follows by proving that
$x_{d+1}^{m-2} \dots x_n^{m-2}  \varepsilon_{|\kappa|}^{X_c\cup R_{\kappa}}\varepsilon_{\lambda_m}^{A \cup B} \cong_{\lambda}0$ for each term appearing in the maximal promotion. To see this note that for any partition of a subset  $S \subset \{ x_1,\dots, x_n\}$ into ordered subsets $U$ and $V$ it is evident that
\begin{equation}
  \varepsilon_r^S = \sum_{j=0}^r (-1)^{|U| \cdot j}e_{r-j}^Ue_{j}^V.
\end{equation}
Then we have
\[
\varepsilon_{|\kappa|}^{X_c\cup R_{\kappa}}\varepsilon_{\lambda_m}^{A \cup B}
= (-1)^{|X_c \cup R_{\kappa}|\cdot\lambda_m}
\left(\varepsilon_{|\kappa|+\lambda_m}^{X_{c} \cup R_{\kappa} \cup A \cup B}
- \sum
(-1)^{|X_c \cup R_{\kappa}|\cdot j}
\varepsilon_{|\kappa|+\lambda_m-j}^{X_c\cup R_{\kappa}}\varepsilon_{j}^{A \cup B}
  \right)
\]
where the sum is over all $0 \leq j \leq |\kappa|+\lambda_m$ with $j \neq \lambda_m$.  The reader can easily verify that
\[
\varepsilon_{|\kappa|+\lambda_m}^{X_{c} \cup R_{\kappa} \cup A \cup B} \in OI_{\lambda}.
 \]
Using Lemma \ref{lem-tech1} when $j > \lambda_m$ and Lemma~\ref{lem-tech2} when $j<\lambda_m$, it follows that each term in the sum is also congruent to zero in $OI_{\lambda}$.
\end{proof}

\begin{prop} \label{prop_Garnir}
Given a Garnir node $(a,b)$ of a partition $\lambda$ then
\begin{equation} \label{eq-Garnier}
G_{(a,b)} \cong_{\lambda} 0
\end{equation}
in  $OH(X^{\lambda})$.
\end{prop}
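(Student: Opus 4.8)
The plan is to prove the relation for an arbitrary Garnir node by the same two-step strategy used for the bottom corner in Proposition~\ref{prop_mlambdam}, viewing $(a,b)$ as a bottom corner ``shifted up'' into rows $a$ and $a+1$. First I would establish the analog of the rewriting in Proposition~\ref{prop_Garnir-rewrite}. Since every coset representative $w\in S_A\times S_B/S_{A\cup B}$ permutes only the entries lying in the Garnir belt $A\cup B$, each term $T_w(m^{\lambda})$ shares a common monomial factor arising from the rows above row $a$, the initial segment of row $a$, the non-belt tail of row $a+1$, and all of the rows below row $a+1$. Factoring this out and matching the signs $(-1)^{\ell(w)}$ against the sign convention of the odd partial elementary symmetric function exactly as before yields
\[
G_{(a,b)} = \pm\, M\cdot \varepsilon_{b}^{A\cup B},
\]
where $M$ is an explicit monomial in which the belt variables carry the lower exponent $a-1$.

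The second step is to show $M\cdot\varepsilon_b^{A\cup B}\cong_{\lambda}0$ by maximal promotion. The variables of $M$ lying to the left of $\varepsilon_b^{A\cup B}$ of strictly larger index --- the non-belt tail of row $a+1$ together with all of rows $a+2,\dots,m$ --- are absorbed into the belt elementary symmetric function using \eqref{eq-promo}, while the non-belt initial segment of row $a$ is promoted into the base of lower-index variables indexing the rows above row $a$, precisely as $x_{c+1}^{m-2}\cdots x_d^{m-2}\varepsilon_0^{X_c}$ was promoted in Proposition~\ref{prop_mlambdam}. Applying the product-to-sum identity $\varepsilon_r^S=\sum_j(-1)^{|U|j}\varepsilon_{r-j}^{U}\varepsilon_{j}^{V}$ then splits the result into a single partial elementary symmetric function over the whole relevant variable set, which lies in $OI_{\lambda}$ by a direct degree count, together with error terms to be killed by (the generalizations of) Lemmas~\ref{lem-tech1} and \ref{lem-tech2}. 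When $a=m-1$ there are no rows below the belt and this reproduces Proposition~\ref{prop_mlambdam} verbatim.

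The main obstacle, and the genuinely new feature when $a<m-1$, is the presence of the rows strictly below the belt: they contribute variables of the high exponents $a+1,\dots,m-1$ that must be folded into the maximal-promotion bookkeeping, and the degree inequalities guaranteeing that the promoted partial elementary symmetric functions land in $OI_{\lambda}$ --- the $(a,b)$-analogs of the inequalities closing the proofs of Lemmas~\ref{lem-tech1} and \ref{lem-tech2} --- must be re-derived uniformly in both $a$ and $b$. I expect the verification of these inequalities, rather than the rewriting or the formal promotion step, to be where the real difficulty lies.
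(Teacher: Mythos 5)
Your step one does generalize cleanly: the belt entries for any Garnir node $(a,b)$ are consecutive in $\mathfrak{t}^{\lambda}$, so the argument of Proposition~\ref{prop_Garnir-rewrite} factors $G_{(a,b)}$ as $\pm M\cdot\varepsilon_{b}^{A\cup B}$ exactly as you say. The genuine gap is that all of the content of the proposition is in your step two, and you defer precisely that part: the $(a,b)$-analogs of Lemmas~\ref{lem-tech1} and~\ref{lem-tech2}. Those lemmas are tailored to the bottom-corner geometry --- their inequalities are phrased in terms of $\lambda_{m-1}$, $\lambda_m$, $h(\lambda)$ and $\delta_{\rho}(\lambda)$, and they are tight (they hold only because $\alpha>0$). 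When $a<m-1$ the rows below the belt enter on both sides of such inequalities: they raise the exponents fed into the maximal promotion and they change $\delta_{\rho}(\lambda)$ for the relevant variable sets. In particular there is no a priori guarantee that \emph{each individual} promoted term $\varepsilon_{r}^{S}$ lies in $OI_{\lambda}$; it is entirely possible that only the signed sum does, in which case a termwise inequality strategy cannot close the argument without additional grouping. As written, the proposal is therefore a plan whose decisive step is missing, not a proof.

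The paper closes this step by a reduction that avoids any new inequality, and it is worth comparing. Let $\lambda'=(\lambda_1,\dots,\lambda_a,b)$ be the partition of $n'<n$ obtained by deleting every box below or to the right of the Garnir belt; then $(a,b)$ is a bottom corner of $\lambda'$, so $G_{(a,b)}\cong_{\lambda'}0$ by Proposition~\ref{prop_mlambdam}. Since the coset representatives permute only belt entries, the Garnir element for $\lambda$ is, up to sign, the Garnir element for $\lambda'$ multiplied by the monomial in the variables of the deleted boxes. Now rebuild $\lambda$ from $\lambda'$ one box at a time in row-filling order: each re-added box sits in the bottom row of the partition $\mu$ built so far, so its variable carries exponent $h(\mu)-1$, and the argument of \eqref{eq_zero-to-height} converts (element of the previous ideal)$\,\cdot\, x_{\mathrm{new}}^{h(\mu)-1}$ into a left multiple of $x_{\mathrm{new}}^{h(\mu)}$, which is $\cong_{\mu}0$ by Lemma~\ref{lem_x-height}. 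Iterating over all deleted boxes lands $G_{(a,b)}$ in $OI_{\lambda}$. So the paper reuses only the bottom-corner case plus the recursion already proved for the spanning-set argument, whereas your route would require proving two-parameter versions of the technical lemmas --- substantially more work, with a real risk that the termwise statements are simply false.
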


\begin{proof}
If $(a,b)$ is not a bottom corner Garnir node, let $\lambda'$ be the subpartition of $n'<n$ obtained by removing all boxes below and to the right of the Garnir belt in $\lambda$ so that the node $(a,b) = (h(\lambda')-1, \lambda'_{h(\lambda')})$ is a bottom corner in $\lambda'$.  It follows from Proposition~\ref{prop_mlambdam} that $G_{(a,b)} \cong_{\lambda'} 0$. Let $\mu$ be the partition subordinate to $\lambda$ obtained by adding a box to $\lambda'$ of minimal height. Then arguing as in \eqref{eq_zero-to-height} if follows that $G_{(a,b)}x_{n'+1}^{h(\mu)-1} \cong_{\mu} f \cdot x_{n'+1}^{h(\mu)}$ for some $f \in OH(X^{\mu})$. Continuing in this way the Garnir relation for the node $(a,b)$ in $\lambda$ follows from the Garnir relation for this node in $\lambda'$.
\end{proof}

%
\subsubsection{Main theorem}
%

Propositions~\ref{prop_row} and \ref{prop_Garnir} prove our main result.

\begin{thm} \label{thm_main}
There is an isomorphism $S_{-1}^{\lambda} \cong OH^{\topp}(X^{\lambda}) \otimes_{\Z} \mathbbm{F}$ of $\Ho$-modules sending the generator $v_{\mathfrak{t}^{\lambda}}$ corresponding to the row filled tableau in the Specht module $S_{-1}^{\lambda}$ to the monomial $m^{\lambda}$.
\end{thm}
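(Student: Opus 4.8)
The plan is to exploit the presentation of the Specht module $S_{-1}^{\lambda}$ by generators and relations recalled from \cite{KLLO}: as a left $\Ho$-module it is cyclic, generated by the single vector $v_{\mathfrak{t}^{\lambda}}$, subject only to the row relations $T_w(v_{\mathfrak{t}^{\lambda}}) = v_{\mathfrak{t}^{\lambda}}$ for $w \in W^{\lambda}$ and the Garnir relations $\sum_{w}(-1)^{\ell(w)}T_w(v_{\mathfrak{t}^{\lambda}}) = 0$. By the universal property this presentation affords, to produce an $\Ho$-module map $\phi \colon S_{-1}^{\lambda} \to OH^{\topp}(X^{\lambda})\otimes_{\Z} \mathbbm{F}$ with $\phi(v_{\mathfrak{t}^{\lambda}}) = m^{\lambda}$ it suffices to check that $m^{\lambda}$ satisfies these same defining relations inside $OH^{\topp}(X^{\lambda})\otimes_{\Z}\mathbbm{F}$. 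This is exactly the content of Proposition~\ref{prop_row} (row relations) and Proposition~\ref{prop_Garnir} (Garnir relations). Because the $\Ho$-action on $\opol_n$ is homogeneous and $m^{\lambda} = m^{\mathfrak{t}^{\lambda}}$ is a maximal-degree element of $O\mathcal{B}(\lambda)$, the image of $\phi$ lands in the top graded piece, so $\phi$ is well defined.

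Next I would reduce the theorem to surjectivity by a dimension count. The Specht module carries the standard basis $\{v_{\mathfrak{t}}\}$ indexed by standard Young tableaux, so $\dim_{\mathbbm{F}} S_{-1}^{\lambda}$ equals the number $f^{\lambda}$ of such tableaux. On the other side, the bijection $\mathfrak{t}\mapsto m^{\mathfrak{t}}$ of the preceding subsection identifies standard tableaux of shape $\lambda$ with the maximal-degree elements of the integral basis $O\mathcal{B}(\lambda)$; hence $\{m^{\mathfrak{t}}\}$ is a basis of $OH^{\topp}(X^{\lambda})\otimes_{\Z}\mathbbm{F}$ and $\dim_{\mathbbm{F}} OH^{\topp}(X^{\lambda})\otimes_{\Z}\mathbbm{F} = f^{\lambda}$ as well. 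Since $\phi$ is a linear map between spaces of equal finite dimension, it is an isomorphism as soon as it is surjective. Note that one cannot instead argue injectivity via irreducibility of $S_{-1}^{\lambda}$: at the root of unity $q=-1$ the Specht module need not be simple.

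It therefore remains to prove that $m^{\lambda}$ generates $OH^{\topp}(X^{\lambda})\otimes_{\Z}\mathbbm{F}$ as an $\Ho$-module, equivalently that every basis element $m^{\mathfrak{t}}$ lies in $\Ho\cdot m^{\lambda}$. For this I would establish the triangular behavior of the operators $T_i = 1 + (x_i-x_{i+1})\partial_i$ on the monomial basis: when $i$ and $i+1$ occupy different rows of a standard tableau $\mathfrak{t}$, a direct odd divided difference computation (factoring $m^{\mathfrak{t}}$ as $L\,x_i^a x_{i+1}^b\,R$ with $L$ in the variables before $x_i$ and $R$ in those after $x_{i+1}$, and applying the Leibniz rule \eqref{eq_Leibniz}) shows that $T_i(m^{\mathfrak{t}})$ equals a nonzero multiple of $m^{s_i\mathfrak{t}}$ plus a scalar multiple of $m^{\mathfrak{t}}$, matching the Specht action \eqref{eq_Specht-action} up to the signs absorbed into the bijection. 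Starting from $\mathfrak{t}^{\lambda}$ and inducting along a chain of standardness-preserving adjacent transpositions connecting $\mathfrak{t}^{\lambda}$ to an arbitrary standard $\mathfrak{t}$, one solves for each new $m^{s_i\mathfrak{t}}$ in terms of $T_i(m^{\mathfrak{t}})$ and $m^{\mathfrak{t}}$ and concludes $m^{\mathfrak{t}}\in \Ho\cdot m^{\lambda}$. This yields surjectivity and, with the dimension count above, completes the proof.

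I expect this last step to be the main obstacle: tracking the precise signs in the odd divided difference evaluation of $T_i$ on a monomial, and verifying the combinatorial fact that the standard tableaux of shape $\lambda$ are connected to $\mathfrak{t}^{\lambda}$ through transpositions that preserve standardness (so that no intermediate rewriting by Garnir relations is needed inside the induction). Both are essentially bookkeeping, but they are precisely where the genuinely ``odd'' signs enter, so I would isolate the $T_i$-action computation as a separate lemma before running the induction.
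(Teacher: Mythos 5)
Your proposal follows essentially the same route as the paper: the paper's entire proof of Theorem~\ref{thm_main} is to invoke the KLLO presentation of $S^{\lambda}_{-1}$ together with Propositions~\ref{prop_row} and~\ref{prop_Garnir}, exactly as in your first paragraph. The surjectivity/triangularity step you isolate as the ``main obstacle'' is the part the paper leaves implicit --- it is contained in the remark that the sign for an arbitrary standard tableau is obtained by acting with a suitable $T_w$ on $m^{\lambda}$, and in the computation $T_w(m^{\lambda})=\pm\,(\text{monomial})$ inside the proof of Proposition~\ref{prop_Garnir-rewrite} --- so your write-up is, if anything, a more complete version of the paper's one-line argument.
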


%
%



%

%
%
%
%
%

%
\end{document}